\def\R{\mathbb R}
\def\N{\mathbb N}
\def\al{\alpha}
\def\be{\beta}
\def\ga{\gamma}
\def\de{\delta}
\def\ep{\epsilon}
\def\la{\lambda}
\def\La{\Lambda}
\def\ta{\theta}
\def\var{\varphi}
\def\om{\omega}
\def\na{\nabla}
\def\Ga{\Gamma}  % Euler functions
\def\Om{\Omega}  % domains
\def\De{\Delta}      % Laplacian operators
\def\wq{\infty}
\def\pa{\partial}
\def\divergence{\text{\rm div}\,}
\def\Id{{\rm Id}\,}
\def\loc{\text{\rm loc}}
\newcommand{\D}{{\rm d}}%  integral sign d
\newcommand{\medint}{-\kern -,375cm\int}         %  average integral
\newcommand{\medintinrigo}{-\kern -,315cm\int}
\newcommand{\wto}{\rightharpoonup}                %  weak convergence
 \def\Sd{\text{\rm div}\,} %%%Sd=divergence=san du
\numberwithin{equation}{section}
\newtheorem{theorem}{Theorem}[section]
\newtheorem*{theorem*}{Theorem}  %Theorems without number
\newtheorem*{conclusion*}{Conclusin}
\newtheorem*{conjecture*}{Conjecture}
\newtheorem{corollary}[theorem]{Corollary}
\newtheorem*{corollary*}{Corollary}
\newtheorem{lemma}[theorem]{Lemma}
\newtheorem*{lemma*}{Lemma}
\newtheorem*{notation*}{Notation}
\newtheorem{proposition}[theorem]{Proposition}
\newtheorem*{proposition*}{Proposition}
\newtheorem{remark}[theorem]{Remark}
\newtheorem*{remark*}{Remark}
\newtheorem{example}[theorem]{Example}
\newtheorem*{example*}{Example}                %%%%% example can not use, due to conflict with Lyx.
\theoremstyle{definition}
\begin{document}
\title[]{The Lamm-Rivi\`ere system I: $L^p$ regularity theory}

 \author[C.-Y. Guo, C.-L. Xiang and G.-F. Zheng]{Chang-Yu Guo, Chang-Lin Xiang$^\ast$ and Gao-Feng Zheng}

\address[Chang-Yu Guo]{Research Center for Mathematics and Interdisciplinary Sciences, Shandong University 266237,  Qingdao, P. R. China and Institute of Mathematics, \'Ecole Polytechnique F\'ed\'erale de Lausanne (EPFL), Station 8,  CH-1015 Lausanne, Switzerland}
\email{changyu.guo@email.sdu.edu.cn}

\address[Chang-Lin Xiang]{School of Information and Mathematics, Yangtze University, Jingzhou 434023, P. R. China}
\email{changlin.xiang@yangtzeu.edu.cn}

\address[Gao-Feng Zheng]{School of Mathematics and Statistics, Central China Normal University, Wuhan 430079,  P. R.  China}
\email{gfzheng@mail.ccnu.edu.cn}

\thanks{*Corresponding author: Chang-Lin Xiang}
\thanks{C.-Y. Guo was supported by Swiss National Science Foundation Grant 175985 and the Qilu funding of Shandong University (No. 62550089963197). The corresponding author C.-L. Xiang is financially supported by the National Natural Science Foundation of China (No. 11701045) and  the Yangtze Youth Fund (No. 2016cqn56). G.-F. Zheng is supported by the National Natural Science Foundation of China (No. 11571131).}

\begin{abstract}
Motived by the heat flow and bubble analysis of biharmonic mappings, we study further regularity issues of the fourth order Lamm-Rivi\`ere system
$$\De^{2}u=\De(V\cdot\na u)+{\rm div}(w\na u)+(\na\om+F)\cdot\na u+f$$
in  dimension four, with an inhomogeneous term $f$ which belongs to some natural function space. We obtain optimal higher order  regularity and sharp H\"older continuity of weak solutions. Among several applications, we derive weak compactness for  sequences of weak solutions with uniformly bounded energy, which generalizes the weak convergence theory of approximate biharmonic mappings.
\end{abstract}

\maketitle

{\small
\keywords {\noindent {\bf Keywords:} Lamm-Rivi\`ere system, Conservation law, Decay estimates, Calder\'on-Zygmund theory, Potential theory of Adams}
\smallskip
\newline
\subjclass{\noindent {\bf 2010 Mathematics Subject Classification:} 35J48, 35G50, 35B65}
\tableofcontents}
\bigskip

\section{Introduction}

\subsection{Background and motivation}

In the calculus of variations, finding regular critical points of the variational functional
$$u\mapsto \int F(x,u(x), Du(x))\D x$$
with quadratic growth has been one of the most attractive topics. Among many other interesting geometric models, those related to conformally invariant variational problems, are of particular interests.

A fundamental work of Morrey \cite{Morrey-1948} shows that minimizers in $W^{1,2}(B^2, N)$, $N\subset \R^m$, of the standard Dirichlet energy are locally H\"older continuous and thus are as smooth as the (embedded) Riemannian manifold $N$. However, when the domain has higher dimensions (greater than or equal to three), one loses conformal/scaling invariance of minimizers and in this case, there exist discontinuous minimizers, much less to say about general critical points. A well-known conjecture along this direction was formulated by Hildebrand \cite{Hildbrandt-1980}:
\begin{conjecture*}\label{conjecture:Hildbrandt}
Critical points of coercive conformally invariant Lagrangian with quadratic growth are regular.
\end{conjecture*}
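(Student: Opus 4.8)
The plan is to exploit the fact that, in the critical dimension two, the conformal invariance of the Lagrangian forces the Euler--Lagrange system to carry an \emph{antisymmetric} first-order potential, and then to uncover a hidden divergence (conservation-law) structure by a gauge change, after which the borderline $L^{1}$ right-hand side acquires Wente-type compensation. Concretely, I would first reduce the conjecture to a structural statement: following Gr\"uter's computation and Rivi\`ere's reformulation, a coercive conformally invariant Lagrangian with quadratic growth on $B^{2}$ has Euler--Lagrange system
\begin{equation}\label{eq:confEL}
-\De u^{i}=\sum_{j}\Om^{i}_{j}\cdot\na u^{j},\qquad i=1,\dots,m,
\end{equation}
for $u\in W^{1,2}(B^{2},\R^{m})$, where $\Om=(\Om^{i}_{j})\in L^{2}(B^{2},so(m)\otimes\Lambda^{1}\R^{2})$ is antisymmetric, $\Om^{i}_{j}=-\Om^{j}_{i}$. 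The antisymmetry is the algebraic fingerprint of conformal invariance and is exactly what will let one beat the scaling-critical nature of the quadratic term $\na u\cdot\na u\in L^{1}$.

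Next comes the key step: a gauge that converts \eqref{eq:confEL} into a conservation law. By a fixed-point argument built on Uhlenbeck's construction of Coulomb gauges in two dimensions, one proves that whenever $\|\Om\|_{L^{2}(B^{2})}$ is sufficiently small --- which can always be arranged on a small ball by absolute continuity of $\|\Om\|_{L^{2}}$ together with scaling --- there exist matrix fields $A\in W^{1,2}\cap L^{\infty}(B^{2},GL_{m}(\R))$ with $A,A^{-1}\in L^{\infty}$ and $\dist(A,SO(m))$ small, and $B\in W^{1,2}(B^{2},M_{m}(\R))$, such that
\[
\na A-A\,\Om=\na^{\perp}B .
\]
Substituting into \eqref{eq:confEL} and using invertibility of $A$ produces the conservation law
\[
\divergence\!\left(A\na u\right)=\na^{\perp}B\cdot\na u ,
\]
whose right-hand side is a sum of Jacobians $\pa_{x_{1}}B\,\pa_{x_{2}}u-\pa_{x_{2}}B\,\pa_{x_{1}}u$.

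Finally I would run the bootstrap. The Jacobian structure on the right-hand side means, via a Hodge decomposition $A\na u=\na\Phi+\na^{\perp}\Psi$ and Wente's inequality (equivalently, the $\mathcal{H}^{1}$--$\mathrm{BMO}$ duality estimate for div--curl quantities), that $\na u$ gains integrability: on slightly smaller balls $\na u$ lies in a Morrey space $L^{2,\la}$ with $\la>0$, whence by the Dirichlet-growth theorem and Adams' embedding $\na u\in L^{p}_{\loc}(B^{2})$ for some $p>2$, so $u\in C^{0}(B^{2})$. Once $u$ is continuous the coefficients in \eqref{eq:confEL} are effectively $\mathrm{VMO}$ and a standard $L^{p}$/Schauder iteration --- together with the smoothness of the target manifold $N$ --- upgrades $u$ to be as regular as $N$ permits. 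The main obstacle is, without doubt, the gauge construction: producing the pair $(A,B)$ under only a smallness hypothesis on $\|\Om\|_{L^{2}}$, and in particular controlling $A$ in $L^{\infty}$ (recall that $W^{1,2}$ does \emph{not} embed in $L^{\infty}$ in two dimensions, so one genuinely needs the Coulomb-gauge fixed point and not a naive construction), is the analytic heart in which all of the conformal structure is spent; the remaining steps are comparatively soft.
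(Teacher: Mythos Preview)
The paper does not actually prove this conjecture; it is quoted only as historical background, with the resolution attributed to Rivi\`ere~\cite{Riviere-2007}. Your proposal is essentially a correct outline of Rivi\`ere's argument as the paper summarizes it: reduce the Euler--Lagrange system to $-\Delta u=\Omega\cdot\nabla u$ with $\Omega\in L^{2}(B^{2},so(m)\otimes\Lambda^{1}\R^{2})$, construct the gauge pair $(A,B)$ with $\nabla A-A\Omega=\nabla^{\perp}B$ under smallness of $\|\Omega\|_{L^{2}}$, and pass to the conservation law (the paper writes it in the equivalent form $\divergence(A\nabla u+B\nabla^{\perp}u)=0$) from which continuity follows. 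So your approach matches the one the paper attributes to Rivi\`ere; there is nothing further to compare, since no proof is given in the paper itself.
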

%A well-known conjecture along this direction was formulated by Hildbrandt \cite{Hildbrandt-1980} in the late 1970s:\emph{ critical points of coercive conformally invariant Lagrangian with quadratic growth are regular.}

In his pioneer work \cite{Helein-2002}, Helein confirmed  this conjecture  in the case of weakly  harmonic mappings: every weakly harmonic mappings from the two dimensional disk $B^2\subset \R^2$ into any closed manifold are smooth via the nowadays well-known moving frame method.

In 2007, this conjecture was fully settled down  by Rivi\`ere in his remarkable work \cite{Riviere-2007}. More precisely, in  \cite{Riviere-2007},  he proposed the general second order linear elliptic  system
\begin{equation}\label{eq:Riviere system}
	-\Delta u=\Omega\cdot \nabla u \qquad \text{in }B^2
\end{equation}
where $u\in W^{1,2}(B^2, \R^m)$ and $\Omega=(\Omega_{ij})\in L^2(B^2,so_m\otimes \Lambda^1\R^2)$. As was verified in \cite{Riviere-2007}, \eqref{eq:Riviere system} includes
the Euler-Lagrange equations of  critical points of all second order conformally invariant variational functionals which  act on mappings $u\in W^{1,2}(B^2,N)$ from  $B^2\subset \R^2$ into a closed Riemannian manifold $N\subset \R^m$. The approach of Rivi\`ere involves finding a map $A\in L^{\wq}\cap W^{1,2}(B^2, Gl(m))$ and $B\in W^{1,2}(B^2, M_m)$
satisfying $\nabla A-A\Omega=\na^{\bot}B$,
 such that system \eqref{eq:Riviere system} can be written  equivalently as the conservation law
\begin{equation}\label{eq:conservation law of Riviere}
	\divergence(A\nabla u+B\nabla^{\bot} u )=0.
\end{equation}
Then the continuity of weak solutions of system \eqref{eq:Riviere system} follows rather easily from the conservation law \eqref{eq:conservation law of Riviere}. As \eqref{eq:Riviere system} includes the equations of weakly harmonic mappings from  $B^2$ into $N$, this recovered the regularity result of  H\'elein  \cite{Helein-2002}. In fact, Rivi\`ere's work has far more applications beyond conformally invariant problems; see \cite{Riviere-2011,Riviere-2012} for a comprehensive overview.

Starting from the celebrated work of Eells and Sampson~\cite{Eells-Sampson-1964}, there have been great attempts to find regular solutions for the heat flow of harmonic mappings (see for instance \cite{Struwe-1985,Struwe-1988,Liu-2003-ARMA,Moser-2015-TAMS} and the references therein). This leads to the general consideration of the inhomogeneous Rivi\`ere system
\begin{equation}\label{eq:inhomogenuous system ST}
-\Delta u=\Omega\cdot \nabla u+f\qquad \text{in }B^2,
\end{equation}
where the drift term $f\colon B^2\to \R^m$ belongs to certain natural function space. In case of heat flow of harmonic mappings, $f\in L^p(B^2,\R^m)$ shall denote the first order partial derivative of the flow with respect to time. Two basic topics, with large mathematical interest, related to \eqref{eq:inhomogenuous system ST} are the weak compactness of Palais-Smale sequences and the energy identity (or bubble analysis). The energy identity quantifies the limiting behaviour of certain energy of a sequence of weak solutions. To be more precise, let $\{u_n\}_{n\in \N}$ be a sequence of weak solutions (say, to the harmonic mapping system) with uniformly bounded energy, for which we denoted by $E(u_n)$. In general, a subsequence of $\{u_n\}$ shall converge weakly to some limiting map $u$, but the convergence does not necessarily have to be strong. One can show, with some effort, that away from a finite set $\Sigma=\{x_1,\cdots,x_k\}$, the converge $u_n\to u$ will be strong. Moreover, the loss of energy during the limiting process happens exactly because of energy concentration at these finite points $x_i$, $i=1,\cdots,k$.

The study of energy identity for harmonic mappings was initiated by Sacks and Uhlenbeck in the seminal work \cite{Sacks-Uhlenbeck-1981} and then attracted great attention in geometric analysis of various mappings and equations. Based on the new method of Rivi\`ere \cite{Riviere-2007}, energy identity for the general system \eqref{eq:inhomogenuous system ST} was obtained very recently in \cite{Laurain-Riviere-2014-APDE,Lamm-Sharp-2016-CPDE}. An important intermediate step towards  these results is to establish a higher order $L^p$-regularity theory for weak solutions of the system \eqref{eq:inhomogenuous system ST}, which was done in the very interesting work of Sharp and Topping \cite{Sharp-Topping-2013-TAMS}; see also \cite{Moser-2015-TAMS} for applications of higher $L^p$-regularity theory in the study of heat flow of harmonic mappings.

%\textbf{Re-organize this paragraph}: In \cite{Rupflin-2008}, Rupflin proved that for $p\in (1,2)$, weak solutions of \eqref{eq:inhomogenuous system ST} are locally H\"older continuous with an optimal exponent $\alpha=2(1-\frac{1}{p})$.  Then in an interesting work of Sharp and Topping \cite{Sharp-Topping-2013-TAMS}, the authors investigated in depth what sort of regularity and compactness properties one can deduce for solutions of the system \eqref{eq:inhomogenuous system ST}. They proved that when $p\in (1,2)$, each weak solution $u\in W^{1,2}(B_1,\R^m)$ of \eqref{eq:inhomogenuous system ST} actually belongs to $W^{2,p}_{\loc}(B_1)$, and hence recovered the H\"older continuity result of Rupflin \cite{Rupflin-2008}. Furthermore, under certain smallness assumption on the $L^2$-norm of $\Omega$, they derived optimal global estimates for the weak solutions of \eqref{eq:inhomogenuous system ST}. As an application, they obtained compactness result for the inhomogeneous system \eqref{eq:inhomogenuous system ST}.

Moving to four dimensions and taking into account of the conformal invariance in $\R^4$, in order to obtain smooth mappings $u\colon B^4\to N\hookrightarrow \R^m$, it is natural to consider critical points of the bi-energy (or intrinsic bi-energy), that is, critical points of the $L^2$-norm of $\Delta u$ (or $(\Delta u)^T$\footnote{Here $(\Delta u)^T$ denotes the projection of $\Delta u$ into $T_uN$.}, respectively). These critical points are called extrinsic (intrinsic, respectively) biharmonic mappings and they form a natural generalization of harmonic mappings.  Chang, Wang and Yang \cite{Chang-W-Y-1999} initiated the study of  regularity theory of extrinsic biharmonic mappings from the $n$-dimensional Euclidean ball $B^n$ into Euclidean spheres and proved smoothness of these mappings.  Shortly after that, Wang  developed a regularity theory of both extrinsic and intrinsic biharmonic mappings into general closed Riemannian manifolds in a series of pioneer works \cite{Wang-2004-CV,Wang-2004-MZ,Wang-2004-CPAM} via the method of Coulomb frames.

Similar to harmonic mappings, there has been great interest to find regular solutions for the heat flow of biharmonic mappings (see for instance \cite{Lamm-2004,Lamm-2005,Wang-2012-JGA,Hineman-Huang-Wang-2014-CVPDE}). This leads to the general consideration of the inhomogeneous Lamm-Rivi\`ere system
\begin{eqnarray}\label{eq:inhomogenous Lamm-Riviere system 1}
\De^{2}u=\De(V\cdot\na u)+{\rm div}(w\na u)+(\na\om+F)\cdot\na u+f &  & \text{in }B^{4},
\end{eqnarray}
where all the involved coefficients belong to some natural function spaces. When $f=0$, the homogeneous  system \eqref{eq:inhomogenous Lamm-Riviere system 1}  was first introduced by Lamm and Rivi\`ere \cite{Lamm-Riviere-2008}. It includes both   Euler-Lagrange equations of the extrinsic and intrinsic biharmonic mappings from Euclidean balls into Riemannian manifolds as well as their variants such as approximate biharmonic mappings. One particular motivation for Lamm and Rivi\`ere to consider system \eqref{eq:inhomogenous Lamm-Riviere system 1} is to extend the new powerful method of Rivi\`ere \cite{Riviere-2007} to fourth order system and to give a unified treatment of the regularity theory for the above mentioned mapping classes. Based on the fundamental work of Lamm and Rivi\`ere \cite{Lamm-Riviere-2008}, the first two authors of the present paper established the weak compactness of Palais-Smale sequences in \cite{Guo-Xiang-2019-Boundary} and it remains a natural problem to study the energy identity (or bubble analysis) for sequences of weak solutions of \eqref{eq:inhomogenous Lamm-Riviere system 1}, extending the corresponding results for (approximate) biharmonic mappings obtained in \cite{Hornung-Moser-2012,Wang-Zheng-2012-JFA,Laurain-Riviere-2013-ACV}.

In the present paper, we aim at establishing a higher order $L^p$-regularity theory for weak solutions of the inhomogeneous Lamm-Rivi\`ere system \eqref{eq:inhomogenous Lamm-Riviere system 1}, akin to that of Sharp and Topping \cite{Sharp-Topping-2013-TAMS} for the inhomogeneous Rivi\`ere system \eqref{eq:inhomogenuous system ST}. In a following-up work, we shall apply the $L^p$-regularity theorems to establish the energy identity for system \eqref{eq:inhomogenous Lamm-Riviere system 1}.

\subsection{Main results}

Let $B_{r}\subset\R^{4}$ be an open ball with radius $r$, $m\in \N$ and  $u\in W^{2,2}(B_{10},\R^{m})$  a weak solution of the  inhomogeneous Lamm-Rivi\`ere system
\begin{eqnarray}\label{eq:inhomogenous Lamm-Riviere system}
\De^{2}u=\De(V\cdot\na u)+{\rm div}(w\na u)+(\na\om+F)\cdot\na u+f &  & \text{in }B_{10},
\end{eqnarray}
where
\begin{equation}\label{eq: regularity of coefficients}
\begin{aligned}
 &   V\in W^{1,2}(B_{10},M_{m}\otimes\Lambda^{1}\R^{4}),\quad\quad w\in L^{2}(B_{10},M_{m})\\
 &  \om\in L^{2}(B_{10},so_{m}),\quad\quad   F\in L^{\frac{4}{3},1}(B_{10},M_{m}\otimes\Lambda^{1}\R^{4})
\end{aligned}
\end{equation}
  and $f\in L\log L(B_{10},\R^{m})$.  For the definitions of the Lorentz function spaces $L^{\frac{4}{3},1}$ and $L\log L$, see section \ref{sec: preliminaries}.

%Inspired by the two dimensional case,
%there are also further development on the regularity theory of the fourth order elliptic system \eqref{eq:inhomogenous Lamm-Riviere system}. As mentioned earlier,
%Lamm and Rivi\`ere \cite{Lamm-Riviere-2008} successfully extended the conservation law of \eqref{eq:Riviere system} to the fourth order system \eqref{eq:inhomogenous Lamm-Riviere system}.
We begin our discussion by recording the following fundamental result of Lamm and Rivi\`ere \cite{Lamm-Riviere-2008}.
\begin{theorem*}[Lamm and Rivi\`ere, \cite{Lamm-Riviere-2008} ]\label{prop: Lamm-Riviere}
  For any $m\in \N$, there exist  constants $C_m>0$ and $\ep_m>0$  such that  If
  \begin{equation}\label{eq:smallness assumption}
\|V\|_{W^{1,2}(B_{{10}})}+\|w\|_{L^{2}(B_{{10}})}+\|\om\|_{L^{2}(B_{10})}+\|F\|_{L^{4/3,1}(B_{10})}<\ep_{m},
\end{equation} then there exist  $A\in W^{2,2}\cap L^{\wq}(B_{8},M(m))$ and $B\in W^{1,4/3}(B_{8},M(m)\otimes\wedge^{2}\R^{4})$
satisfying
\begin{eqnarray*}\label{eq:condition for A and B}
\na\De A+\De AV-\na Aw+AW={\rm curl}(B) &\text{ in } B_8,
\end{eqnarray*}
where ${\rm curl}(B)=\sum_{l}\partial_{x_l}B_{lk}\partial_{x_k}$. Moreover,
\begin{equation}\label{eq: A-B small}
\begin{aligned}
&\|A\|_{W^{2,2}(B_{8})}+\|{\rm dist}(A,SO_{m})\|_{L^{\wq}(B_{8})}+\|B\|_{W^{1,4/3}(B_{8})}\\
&\le C_{m}\left(\|V\|_{W^{1,2}(B_{{10}})}+\|w\|_{L^{2}(B_{{10}})}+\|\om\|_{L^{2}(B_{{10}})}
+\|F\|_{L^{4/3,1}(B_{10})}\right).
\end{aligned}
\end{equation}
Consequently,  $u$ solves \eqref{eq:inhomogenous Lamm-Riviere system}  if and only if it satisfies the conservation law
\begin{equation}\label{eq:conservation law of Lamm Riviere}
\De(A\De u)=\divergence(K)+Af \qquad \text{ in } B_8,
\end{equation}
where
\begin{equation}\label{eq: K}
K=2\na A\De u-\De A\na u+Aw\na u-\na AV\cdot\na u+A\na(V\cdot\na u)+B\cdot\na u.
\end{equation}
\end{theorem*}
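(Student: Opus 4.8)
\medskip
\noindent\emph{Sketch of the argument.} The statement has two parts: the construction of the gauge pair $(A,B)$ obeying $\na\De A+\De AV-\na Aw+AW={\rm curl}(B)$ on $B_8$ together with the estimate \eqref{eq: A-B small}, and the resulting reformulation of \eqref{eq:inhomogenous Lamm-Riviere system} as the conservation law \eqref{eq:conservation law of Lamm Riviere}. For the first part the plan is to transplant Rivi\`ere's two-stage gauge construction to the fourth order, four-dimensional setting. First I would carry out a Coulomb/Uhlenbeck normalisation of the antisymmetric coefficient: using the smallness \eqref{eq:smallness assumption}, find $P\in W^{2,2}(B_{10},SO_m)$, close to a constant matrix, so that the gauge-transformed antisymmetric $1$-form is divergence free, at the cost of replacing $V$, $w$, $F$ by equally small coefficients. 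Then I would look for $A=\Id+a$ with $a$ small in $W^{2,2}\cap L^{\wq}(B_8)$ and $B$ small in $W^{1,4/3}(B_8)$, solving the displayed relation together with a divergence-type gauge condition on $B$; this is set up as a fixed-point equation $(a,B)=\mathcal T(a,B)$ on a small ball of that product space, \eqref{eq:smallness assumption} forces $\mathcal T$ to be a contraction, and the a priori bound extracted in the course of the scheme is precisely \eqref{eq: A-B small}. Since $\Id\in SO_m$, the estimate $\|{\rm dist}(A,SO_m)\|_{L^{\wq}(B_8)}\le\|a\|_{L^{\wq}(B_8)}$ is then automatic; the shrinkage from $B_{10}$ to $B_8$ is the usual loss in a local construction with cut-offs.

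The step I expect to be the genuine obstacle — and the one where both the smallness hypothesis and the four-dimensional compensation phenomena are indispensable — is the $L^{\wq}$ control of $A$, equivalently keeping $A$ close to $SO_m$. Because $W^{2,2}(B^{4})$ barely fails to embed into $L^{\wq}$, plain elliptic regularity does not suffice; one has to expose the div-curl (Jacobian-type) algebraic structure of the equation satisfied by $A$ so that its right-hand side lies in the local Hardy space $\mathcal H^{1}(\R^{4})$, and then combine the div-curl estimates of Coifman--Lions--Meyer--Semmes with the endpoint embedding $W^{4,1}(\R^{4})\hookrightarrow C^{0}$ — equivalently, with Adams-type Riesz-potential estimates — to conclude that $A$ is actually bounded, indeed continuous. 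The four-dimensional Wente-type inequalities and the Lorentz endpoint $F\in L^{4/3,1}$ (rather than merely $F\in L^{4/3}$) are exactly what keeps all of these estimates stable under the iteration. The remaining work — the Sobolev/Lorentz embeddings in dimension four needed to make the products $\De A\,\na u$, $\na A\,\na u$, $Aw\,\na u$, $A\na(V\cdot\na u)$, $B\cdot\na u$ meaningful as distributions, to place $K$ in $L^{4/3}(B_8)$, and to see $Af\in L\log L(B_8)$ — is routine given $A\in W^{2,2}\cap L^{\wq}$, $u\in W^{2,2}$, $V\in W^{1,2}$ and $w,\om\in L^{2}$.

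Granting the gauge pair, the second part is a direct — if lengthy — computation. Suppose $u\in W^{2,2}(B_8,\R^m)$ solves \eqref{eq:inhomogenous Lamm-Riviere system}. By the Leibniz rule $\De(A\De u)=\De A\,\De u+2\na A\cdot\na\De u+A\De^{2}u$; rewriting $2\na A\cdot\na\De u=2\,\divergence(\na A\,\De u)-2\De A\,\De u$ and $\De A\,\De u=\divergence(\De A\,\na u)-\na\De A\cdot\na u$, substituting $\De^{2}u$ from \eqref{eq:inhomogenous Lamm-Riviere system}, and expanding $A\De(V\cdot\na u)$ and $A\,\divergence(w\na u)$ again by the Leibniz rule, one rewrites $\De(A\De u)$ as the sum of the divergences of the first five constituents of $K$ in \eqref{eq: K}, the lower-order term $(\na\De A+\De AV-\na Aw+AW)\cdot\na u$ (with $W=\na\om+F$), and $Af$. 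By the defining relation for $(A,B)$ this lower-order term equals ${\rm curl}(B)\cdot\na u$, and since $B$ is valued in $M(m)\otimes\wedge^{2}\R^{4}$ while the Hessian of $u$ is symmetric one has ${\rm curl}(B)\cdot\na u=\divergence(B\cdot\na u)$ — exactly the sixth, missing divergence in \eqref{eq: K}. Hence $\De(A\De u)=\divergence(K)+Af$. The converse implication follows by running this chain of identities backwards, which is legitimate since $A$, being uniformly close to $SO_m$, is invertible with $A^{-1}\in W^{2,2}\cap L^{\wq}(B_8)$.
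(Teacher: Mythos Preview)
The paper does not provide its own proof of this statement: the theorem is quoted verbatim from Lamm and Rivi\`ere \cite{Lamm-Riviere-2008} and used as a black box throughout. The only commentary the paper adds is the paragraph immediately following the statement, which observes (without proof) that $K\in L^{4/3,1}(B_8)$ and hence $u\in W^{3,4/3,1}\subset C$, referring the reader to \cite{Lamm-Riviere-2008} for details.

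Your sketch is therefore not being compared against a proof in this paper, but it is an accurate outline of the argument in the original Lamm--Rivi\`ere paper: the Uhlenbeck/Coulomb gauge on the antisymmetric part, the fixed-point scheme for $(A,B)$ in $W^{2,2}\cap L^{\wq}\times W^{1,4/3}$, the identification of the critical step as the $L^{\wq}$ bound on $A$ via div-curl/Hardy-space compensation (this is exactly why $F\in L^{4/3,1}$ rather than $L^{4/3}$ is assumed), and the Leibniz-rule computation reducing $\De(A\De u)$ to $\divergence(K)+Af$ once the gauge relation is in hand. Both parts of your sketch are correct in substance and match the original source.
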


Combining \eqref{eq: regularity of coefficients} with the regularity of $A$ and $B$, one easily verifies that $K\in L^{\frac{4}{3}, 1}(B_8)$. Thus $A\De u\in W^{1,\frac{4}{3},1}(B_8)$, which in turn gives $\De u\in W^{1,\frac{4}{3},1}(B_8)$. By elliptic regularity theory,  this implies $u\in W^{3,\frac{4}{3},1}(B_8)$ and thus $u\in C(B_8)$ by Lorentz-Sobolev embedding theorems. For details, see the proof of Theorem 1 of  \cite{Lamm-Riviere-2008} or Theorem \ref{thm:optimal local estimate LlogL} below.
 %Note however that $W^{3,\frac{4}{3},1}$ does not embed into any H\"older spaces.

In a very recent work \cite{Guo-Xiang-2019-Boundary}, the first two authors of the present paper further established  H\"older continuity of weak solutions of \eqref{eq:inhomogenous Lamm-Riviere system} (with $f=0$) by deriving a decay estimate via  the conservation law \eqref{eq:conservation law of Lamm Riviere}. We mention that it is also possible to obtain the H\"older continuity without using convervation law, for details see  \cite{Guo-Xiang-2019-Higher}.

Our first theorem deals with the optimal H\"older continuity of weak solutions to  \eqref{eq:inhomogenous Lamm-Riviere system}.

\begin{theorem}[H\"older continuity]\label{thm:optimal Holder exponent for inho Lamm-Riviere}
	Let $u\in W^{2,2}(B_{10},\R^m)$ be a weak solution of \eqref{eq:inhomogenous Lamm-Riviere system} and assume $f\in L^p(B_{10})$ for $p\in (1,\frac{4}{3})$. Then $u$ is locally $\al$-H\"older continuous with exponent $\al=4(1-\frac{1}{p})$.

Moreover, there exists  $C=C(p,m)>0$ such that for all $0<r<1$, there holds
 \begin{equation}\label{eq: decay estimate for 4th order}
    \begin{aligned}
    \|\na u\|_{L^{4,2}(B_r)}+\|\De u\|_{L^2(B_r)}\leq  Cr^{\al}\left( \|\na u\|_{L^{4,2}(B_1)}+\|\De u\|_{L^2(B_1)}+\|f\|_{L^p(B_1)}\right).
      \end{aligned}
     \end{equation}
\end{theorem}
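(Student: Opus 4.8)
The plan is to establish the decay estimate \eqref{eq: decay estimate for 4th order}; once it is known, the local $\al$-H\"older continuity with $\al=4(1-\tfrac1p)$ follows immediately from Morrey's Dirichlet growth theorem, since $\|\na u\|_{L^{4}(B_r(x))}\le C\|\na u\|_{L^{4,2}(B_r(x))}\le Cr^{\al}$ uniformly for $x$ in a compact subset. A standard scaling argument, together with the absolute continuity of the $\R^{4}$-critical norms of the coefficients $V,w,\om,F$ (so that after rescaling a small ball to $B_{10}$ the smallness assumption \eqref{eq:smallness assumption} is met), reduces the proof of \eqref{eq: decay estimate for 4th order} to the case in which \eqref{eq:smallness assumption} holds with a constant $\ep$ that we are free to choose; in that case the conservation law \eqref{eq:conservation law of Lamm Riviere} is available on $B_{8}$.

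Write $\Phi(r):=\|\na u\|_{L^{4,2}(B_r)}+\|\Delta u\|_{L^2(B_r)}$ for concentric balls $B_r\subset B_1$. The core step is a one-step decay inequality: there are $\theta\in(0,\tfrac12)$ and $C>0$ with $\Phi(\theta r)\le\tfrac12\,\theta^{\al}\,\Phi(r)+C\,r^{\al}\|f\|_{L^{p}(B_1)}$ for all $r\in(0,1)$. Granting this, iterating over the dyadic radii $\theta^{k}$ and using the monotonicity of $\Phi$ gives $\Phi(r)\le C\,r^{\al}\big(\Phi(1)+\|f\|_{L^p(B_1)}\big)$, which is exactly \eqref{eq: decay estimate for 4th order}.

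To prove the one-step inequality, fix $B_r\subset B_1$ and compare $u$ on $B_r$ with the biharmonic function $P$ determined by $P=u$ and $\na P=\na u$ on $\partial B_r$, writing $u=P+R$. Since $\Delta P$ is harmonic and every component of $\na P$ is biharmonic, interior estimates yield $\|\Delta P\|_{L^{2}(B_{\theta r})}\le C\theta^{2}\|\Delta P\|_{L^{2}(B_{r/2})}$ and $\|\na P\|_{L^{4,2}(B_{\theta r})}\le C\theta\,\|\na P\|_{L^{4,2}(B_{r/2})}$, with decay powers $2$ and $1$, both strictly larger than $\al<1$; together with $P=u-R$ this bounds the contribution of $P$ to $\Phi(\theta r)$ by $C\theta\,\Phi(r)+C\theta\big(\|\na R\|_{L^{4,2}(B_r)}+\|\Delta R\|_{L^{2}(B_r)}\big)$. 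The remainder $R$ solves $\Delta^{2}R=\Delta(V\!\cdot\!\na u)+\divergence(w\na u)+(\na\om+F)\!\cdot\!\na u+f$ in $B_r$ with zero Dirichlet data $R=\na R=0$ on $\partial B_r$. Invoking the Calder\'on--Zygmund and Riesz-potential estimates for the Dirichlet bilaplacian in their scale-invariant Lorentz-space form --- precisely the point at which the potential theory of Adams from Section~\ref{sec: preliminaries} is used --- one estimates $\|\na R\|_{L^{4,2}(B_r)}+\|\Delta R\|_{L^{2}(B_r)}$ term by term: each coefficient term carries a factor equal to a norm of $V$, $w$, $\om$ or $F$ over $B_r$, hence at most $C\ep$ by \eqref{eq:smallness assumption} (and in fact $o(1)$ as $r\to0$), multiplied by $\Phi(r)$; the borderline term is handled by writing $(\na\om)\!\cdot\!\na u=\divergence(\om\na u)-\om\Delta u$, which places it in the Lorentz classes $L^{4/3,1}$ and $L^{2,1}$. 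The inhomogeneous term $f$ contributes, by the very choice $\al=4(1-\tfrac1p)$, at most $C\,r^{\al}\|f\|_{L^{p}(B_r)}$ to both $\|\Delta R\|_{L^{2}(B_r)}$ and $\|\na R\|_{L^{4,2}(B_r)}$. Thus $\|\na R\|_{L^{4,2}(B_r)}+\|\Delta R\|_{L^{2}(B_r)}\le C\ep\,\Phi(r)+C\,r^{\al}\|f\|_{L^{p}(B_1)}$, so $\Phi(\theta r)\le C(\theta+\ep)\Phi(r)+C\,r^{\al}\|f\|_{L^p(B_1)}$; choosing first $\theta$ small with $C\theta\le\tfrac14\theta^{\al}$ (possible since $\al<1$) and then shrinking $\ep$ gives the claimed one-step decay. (Alternatively the remainder estimate can be carried out through the conservation law \eqref{eq:conservation law of Lamm Riviere}: decompose $A\Delta u$ on $B_r$ into its harmonic part plus the Newtonian potentials of $\divergence K$ and $Af$, use $\|K\|_{L^{4/3,1}(B_r)}\le C\ep\,\Phi(r)$ from \eqref{eq: K}, and recover the decay of $\na u$ from that of $\Delta u$ via the Adams estimate for $I_1$ after a near/far splitting of the potential.)

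The main obstacle is the coupling between the critical Lorentz norm $L^{4,2}$ of $\na u$ and the $L^{2}$ norm of $\Delta u$. Comparing $u$ only with a harmonic function --- i.e.\ controlling $\na u$ at a given scale by $\Delta u$ at the same scale --- loses a constant of order $1$, rather than the $O(\theta)+O(\ep)$ that the iteration requires; it is essential to compare with a \emph{biharmonic} $P$, so that $\na P$ and $\Delta P$ decay with good powers simultaneously, and to perform every Calder\'on--Zygmund/Sobolev step in scale-invariant (Lorentz) form. The remaining point requiring care is the bookkeeping of the many coefficient terms in $\Delta^{2}R$ (or in $K$) and the verification that each of them genuinely carries a small factor.
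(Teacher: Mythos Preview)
Your overall scheme --- split $u=P+R$ with $P$ biharmonic, show $\Phi(\theta r)\le C(\theta+\ep)\Phi(r)+Cr^\alpha\|f\|_{L^p}$, and iterate --- is reasonable, but the direct estimate of the remainder $R$ has a genuine gap at the borderline. After you rewrite $(\nabla\omega)\cdot\nabla u=\divergence(\omega\nabla u)-\omega\Delta u$, the term $\omega\Delta u$ lies only in $L^{1}$ (since $\omega\in L^2$ and $\Delta u\in L^2$), not in $L^{2,1}$ as you claim; likewise $F\cdot\nabla u\in L^{4/3,1}\cdot L^{4,2}\subset L^1$ and no better. For the corresponding piece of $R$ you then get $\Delta R\sim I_2(L^1)\subset L^{2,\infty}$ and $\nabla R\sim I_3(L^1)\subset L^{4,\infty}$, which is strictly weaker than the $L^2$ and $L^{4,2}$ control that $\Phi$ demands. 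There is no div--curl or Hardy-space structure in $\omega\Delta u$ that would lift $L^1$ to $\mathcal H^1$; the antisymmetry of $\omega$ simply does not help at this stage. So the inequality $\|\nabla R\|_{L^{4,2}}+\|\Delta R\|_{L^2}\le C\ep\,\Phi(r)+Cr^\alpha\|f\|_{L^p}$ is not justified by your argument.

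This is exactly why the conservation law is not optional here: the construction of $A,B$ absorbs the antisymmetric part of $W=\nabla\omega+F$ into the structure so that what remains is $\Delta(A\Delta u)=\divergence K+Af$ with $K\in L^{4/3,1}$, and \emph{that} is the source of the small factor. The paper proceeds entirely through this conservation law, in two separate steps: first the decay of $\|\Delta u\|_{L^2}$ is obtained by writing $A\Delta u=I_2(\divergence K)+I_2(Af)+h$ with $h$ harmonic; then, independently, the decay of $\|\nabla u\|_{L^{4,2}}$ is obtained by rewriting the equation as $\Delta\,\divergence(A\nabla u)=\divergence\hat K+Af$, applying a Hodge decomposition $A\,du=dr+*dg$, and comparing with a \emph{biharmonic} $1$-form. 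Your parenthetical alternative is close in spirit to the first step, but ``recovering the decay of $\nabla u$ from that of $\Delta u$ via Adams'' is not how the paper closes the loop --- the coupling between the two norms is handled by the second, separate, Hodge-type decomposition rather than by a potential estimate linking $\nabla u$ to $\Delta u$ at equal scales. If you want to repair your argument, the cleanest fix is to abandon the direct biharmonic comparison for $R$ and run both steps through the conservation law as the paper does.
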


The H\"older continuity is optimal, as one can see from the simplest case $\De^2 u=f$. Moreover, as one can easily notice, applying Theorem \ref{thm:optimal Holder exponent for inho Lamm-Riviere}  to the case $f\equiv 0$ yields that every weak solution $u\in W^{2,2}(B_{10})$ of the Lamm-Rivi\`ere system\eqref{eq:inhomogenous Lamm-Riviere system} is locally $\al$-H\"older continuous for all $\al\in (0,1)$. On the other hand, the H\"older continuity is the best possible regularity that one can expect for weak solutions of \eqref{eq:inhomogenous Lamm-Riviere system} (even when $f\equiv 0$). Indeed, we shall construct a weak solution $u\colon B\to \R$, $B\subset \R^4$, which belongs to $C^{0,\alpha}(B)\cap W^{2,2}(B)$ for any $\alpha\in (0,1)$, of the system
\[\Delta^2 u=\Delta(V\cdot \nabla u)\]
for some $V\in W^{1,2}(B,\R^4)$. But $u$ fails to be (locally) Lipschitz continuous in $B$. For details, see Remark \ref{exam:non-Lipschitz weak solutions} below.

%We next turn to the higher regularity of weak solutions of \eqref{eq:inhomogenous Lamm-Riviere system}. Intuitively, for any $f\in L^p$, $p>1$, the fourth order system \eqref{eq:inhomogenous Lamm-Riviere system} seems to imply a fourth order regularity, i.e.,  $u\in W^{4,p}_{\loc}$. However, it turns out to be impossible in general.
In our second theorem, we derive optimal higher order regularity of weak solutions.

\begin{theorem}[local $L^p$ estimates]\label{thm:optimal global estimate for inho Lamm-Riviere}
Let $u\in W^{2,2}(B_{10},\R^m)$ be a weak solution of \eqref{eq:inhomogenous Lamm-Riviere system} with $f\in L^p(B_{10})$ for $p\in (1,\frac{4}{3})$. Then $$u\in W_{\loc}^{3,\frac{4p}{4-p}}(B_{10}).$$ Moreover,
 there exist $\ep=\ep(p,m)>0$ and $C=C(p,m)>0$ such that if the smallness condition \eqref{eq:smallness assumption} is satisfied with $\ep_m=\ep$, then
    \begin{equation}\label{eq:optimal third order}
    \|u\|_{W^{3,\frac{4p}{4-p}}(B_{\frac{1}{2}})}\le C\left(\|f\|_{L^{p}(B_{1})}+\|u\|_{L^{1}(B_{1})}\right).
    \end{equation}

If, in addition, we assume $V\in W^{2, \frac{4}{3}}(B_{10})$ and $ w\in W^{1,\frac{4}{3}}(B_{10})$, then $$u\in W_{\loc}^{4,p}(B_{10}),$$ and
    \begin{equation}\label{eq:optimal fourth order}
    \|u\|_{W^{4,p}(B_{\frac{1}{2}})}\le C\left(\|f\|_{L^{p}(B_{1})}+\|u\|_{L^{1}(B_{1})}\right).
    \end{equation}
    %\end{enumerate}
\end{theorem}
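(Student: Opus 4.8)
The plan is to derive both conclusions from the conservation law \eqref{eq:conservation law of Lamm Riviere}--\eqref{eq: K} of Lamm and Rivi\`ere, fed into elliptic and potential estimates, with the decay estimate \eqref{eq: decay estimate for 4th order} of Theorem~\ref{thm:optimal Holder exponent for inho Lamm-Riviere} supplying the Morrey-type smallness that makes the bootstrap move. First I would reduce to the small-coefficient situation: by a covering and rescaling argument the coefficients in \eqref{eq: regularity of coefficients} become as small as desired on small balls and everything in sight is compatible with rescaling, so it suffices to prove \eqref{eq:optimal third order} and \eqref{eq:optimal fourth order} under \eqref{eq:smallness assumption}, and these in turn give the interior statements $u\in W^{3,\frac{4p}{4-p}}_{\loc}$ and $u\in W^{4,p}_{\loc}$. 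As a preliminary, under \eqref{eq:smallness assumption} the conservation law together with interior Caccioppoli-type inequalities and interpolation yields
\[
\|\na u\|_{L^{4,2}(B_2)}+\|\De u\|_{L^2(B_2)}\le C\left(\|f\|_{L^{p}(B_{10})}+\|u\|_{L^{1}(B_{10})}\right),
\]
so that all later estimates close against the right-hand sides of \eqref{eq:optimal third order}--\eqref{eq:optimal fourth order}. Throughout write $q:=\frac{4p}{4-p}\in(\frac43,2)$ and $\al:=4(1-\frac1p)>0$.

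For the third-order estimate I would argue by a Lorentz--Morrey iteration. Theorem~\ref{thm:optimal Holder exponent for inho Lamm-Riviere} gives the decay $\|\na u\|_{L^{4,2}(B_r(x))}+\|\De u\|_{L^2(B_r(x))}\lesssim r^{\al}(\cdots)$, hence $\na u$, $\De u$ and (by Calder\'on--Zygmund) $\na^2 u$ lie in Lorentz--Morrey spaces with a \emph{strictly supercritical} decay rate. Inserting this into \eqref{eq: K}, each summand of $K$ is a product of coefficients (from \eqref{eq: regularity of coefficients}, together with $\na A\in L^{4,2}$, $\De A\in L^2$, $B\in W^{1,4/3}$, which need no decay) with a single first- or second-order derivative of $u$ carrying the decay, so by Lorentz--H\"older $K$ lies in an $L^{4/3,1}$ Lorentz--Morrey space with decay. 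Splitting $A\De u=g_1+g_2$ with $\De g_2=Af\in L^p$ and $\De g_1=\divergence K$, elliptic regularity gives $g_2\in W^{2,p}_{\loc}$, so $\na g_2\in L^{q}_{\loc}$ --- the ceiling imposed by $f$ --- while, since $g_1$ is (up to a Calder\'on--Zygmund operator) a first-order Riesz potential of $K$, the potential estimate of Adams yields a \emph{strict} gain of Lebesgue integrability precisely because the decay exceeds the critical one. Iterating (re-estimate $K$ from the improved $\De u=A^{-1}(g_1+g_2)$, $\na u$, $\na^2 u$, then re-apply Adams) one reaches, after finitely many steps, $\De u\in L^{\frac{2p}{2-p}}_{\loc}$, $\na u\in L^{\frac{4p}{4-3p}}_{\loc}$ and $\na^2 u\in L^{\frac{2p}{2-p}}_{\loc}$ --- exactly the integrabilities compatible with $u\in W^{3,q}$. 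A final pass then gives $K\in L^{q}_{\loc}$ (reachable since $q<2$), hence $\na g_1\in L^{q}_{\loc}$ by Calder\'on--Zygmund, so $\na(A\De u)\in L^{q}_{\loc}$; since $A^{-1}\in W^{2,2}\cap L^{\wq}$, expanding $\na\big(A^{-1}(A\De u)\big)$ gives $\na\De u\in L^{q}_{\loc}$, i.e. $u\in W^{3,q}_{\loc}$, and tracking constants through the finitely many iterations yields \eqref{eq:optimal third order}.

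For the fourth-order estimate I would use the first part together with the extra hypotheses $V\in W^{2,\frac43}$, $w\in W^{1,\frac43}$ (so that, besides $V\in L^4$ and $w\in L^2$, now also $\na V\in L^2$, $\De V,\na^2V\in L^{4/3}$ and $\na w\in L^{4/3}$). With $u\in W^{3,q}_{\loc}$ in hand (hence $\na u\in L^{\frac{4p}{4-3p}}_{\loc}$, $\na^2 u\in L^{\frac{2p}{2-p}}_{\loc}$, $\na^3 u\in L^{q}_{\loc}$), I would verify that $\divergence(K)\in L^{p}_{\loc}$: expanding the divergence of each summand of \eqref{eq: K} and using H\"older together with the Sobolev embeddings, every resulting product has reciprocal integrability exponents summing to $\frac1p$, the only genuinely delicate contribution being $\divergence(\De A\,\na u)=\na\De A\cdot\na u+\De A\,\De u$, where the distributional factor $\na\De A$ is handled through the defining equation $\na\De A+\De A\,V-\na A\,w+AW={\rm curl}(B)$ of the gauge --- equivalently, after the one-derivative improvement of $(A,B)$ obtained by rerunning its construction with the more regular $V,w$ --- the upgraded coefficients being exactly what is needed to close this term. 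Then $\De(A\De u)=\divergence(K)+Af\in L^{p}_{\loc}$ gives $A\De u\in W^{2,p}_{\loc}$; writing $\De u=A^{-1}(A\De u)$ and expanding $\na^2$, the three summands $(\na^2A^{-1})(A\De u)$, $(\na A^{-1})\na(A\De u)$ and $A^{-1}\na^2(A\De u)$ all lie in $L^{p}_{\loc}$ (of type $L^2\cdot L^{\frac{2p}{2-p}}$, $L^4\cdot L^{q}$ and $L^{\wq}\cdot L^p$ respectively), so $\De u\in W^{2,p}_{\loc}$ and hence $u\in W^{4,p}_{\loc}$; chaining the estimates yields \eqref{eq:optimal fourth order}.

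The main obstacle in the third-order part is setting up the correct Lorentz--Morrey functional framework so that the Adams potential estimates apply and the iteration is genuinely monotone; conceptually, it is exactly the supercritical decay of Theorem~\ref{thm:optimal Holder exponent for inho Lamm-Riviere} --- which degenerates in the borderline case $f\equiv 0$ --- that pushes the integrability past the critical exponent $\frac43$. The main obstacle in the fourth-order part is the control of the top-order gauge term $\De A\,\na u$ inside $\divergence(K)$, i.e. the contribution of $\om$ (which is never assumed more regular than $L^2$): this is what forces one to exploit, and possibly to upgrade, the defining equation of the gauge pair $(A,B)$ under the stronger assumptions on $V$ and $w$, and it is also where the lengthy but routine exponent bookkeeping is concentrated.
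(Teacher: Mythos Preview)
Your overall architecture matches the paper's: use the decay of Theorem~\ref{thm:optimal Holder exponent for inho Lamm-Riviere} to place $K$ in a Morrey space, feed this into Adams' improved Riesz estimate (Lemma~\ref{lemma:improved Riesz potential}) to bootstrap the integrability of $\De u$, and then read off third-order regularity from $\na(A\De u)$. The fourth-order part is also essentially the paper's argument, and you correctly single out the top-order gauge contribution $\De A\,\na u$ as the place where the defining equation for $(A,B)$ must be invoked.

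There is, however, a genuine gap in the third-order step. The Adams iteration does \emph{not} reach the endpoint $\De u\in L^{\frac{2p}{2-p}}$ in finitely many steps. Concretely, with $\al=4(1-1/p)$ the Morrey exponent of $K$ is fixed at $M^{1,1+\al}$ throughout, so each pass sends $\De u\in L^{q}$ to $\De u\in L^{\frac{4q}{4+q}\cdot\frac{3-\al}{2-\al}}$; the fixed point of this map is exactly $\bar p=\frac{2p}{2-p}$, and the iterates increase toward $\bar p$ but never attain it. The paper makes this explicit (Proposition~\ref{prop:second order Sobolev regularity} yields only $q<\bar p$; see Remark~\ref{rmk: best 3rd order regularity}) and then devotes Section~\ref{sec:optimal global estimates} to closing the gap by a different mechanism: a \emph{duality} estimate for the piece $v$ in the splitting $A\De u=v+h$ with $h$ harmonic, producing a bound $\|\na^{2}u\|_{L^{\gamma}(B_{1/4})}\le C(p,m)$ that is \emph{uniform} in $\gamma\in(\bar p/2,\bar p)$ (using the locally uniform Calder\'on--Zygmund constants of Appendices~\ref{sec:C-Z theorey revisited}--\ref{sec:an interpolation inequality revisited} and a scaling/Simon-iteration step with the harmonic part controlled via Lemma~\ref{lem: a W1p regularty lemma}). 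Letting $\gamma\nearrow\bar p$ then gives $\na^{2}u\in L^{\bar p}$, after which one more elliptic step yields $u\in W^{3,\bar q}$ and the quantitative bound \eqref{eq:optimal third order}. Your sketch skips this endpoint argument entirely; ``tracking constants through finitely many iterations'' is not enough, since the number of iterations needed to approach $\bar p$ is unbounded and the constants degenerate unless you set up, as the paper does, an estimate that is uniform in the integrability exponent.

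A minor point: the parenthetical ``which degenerates in the borderline case $f\equiv 0$'' is backwards --- when $f\equiv 0$ the decay exponent can be taken arbitrarily close to $1$ (Remark~\ref{rmk:on the decay of Lamm-Riviere in Lp scale}); the degeneration is as $p\downarrow 1$.
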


Theorem \ref{thm:optimal global estimate for inho Lamm-Riviere} can be regarded as a counterpart of Sharp-Topping \cite[Theorem 1.1]{Sharp-Topping-2013-TAMS} to the fourth order system \eqref{eq:inhomogenous Lamm-Riviere system}. Some special cases of Theorem \ref{thm:optimal global estimate for inho Lamm-Riviere} can be found in the literature. In order to study the global existence of extrinsic biharmonic map flow,  Lamm and Rivi\`ere \cite[Lemma 3.1]{Lamm-Riviere-2008} proved a regularity  result for $f\in L^{2}$ under some special conditions on $V,w,\om, F$. Wang and Zheng  \cite[Lemma 2.3 ]{Wang-Zheng-2012-JFA} proved $W^{4,p}$ regularity for approximate extrinsic biharmonic mappings with $f\in L^{p}$ for some $p>1$. Laurain and Rivi\`ere \cite[Theorem 3.3]{Laurain-Riviere-2013-ACV}  also obtained $W^{4,p}$ regularity for \eqref{eq:inhomogenous Lamm-Riviere system}, but under special  growth conditions $$\begin{aligned}
|V| & \leq C|\nabla u| \\
|F| & \leq C|\nabla u|\left(\left|\nabla^{2} u\right|+|\nabla u|^{2}\right) \text { almost everywhere} \\
|w|+|\omega| & \leq C\left(\left|\nabla^{2} u\right|+|\nabla u|^{2}\right).
\end{aligned}$$

\begin{remark}\label{rmk:on W4p regularity}
The third order  regularity is  optimal for system  \eqref{eq:inhomogenous Lamm-Riviere system}.    In general, there does not exist $W^{4,p}$-regularity even for  homogenuous Lamm-Rivi\`ere system; see Example \ref{example:no W4p estimate} below.
\end{remark}

%Immediately we infer from the above two theorems some corollaries,
As immediate consequences of the above two theorems, we have the following   two corollaries, which might be of independent interest.
\begin{corollary}\label{coro:for system with critical growth}
 Let $u\in W^{2,2}(B_{1})$, $B_1\subset \R^4$, be a weak solution of
 \[\De^2 u=Q(x, u,\na u)+f,\quad  f\in L^p(B_1) \]
 with critical growth
\[|Q(x, u, \na u)|\le C|\na u|^4. \]
Then, $u\in W^{4,p}_{\loc}(B_1)$ if $p<3/4$ and $u\in W^{4,p-\de}_{\loc}(B_1)$ for any $\de>0$ if $p\ge 4/3$.
\end{corollary}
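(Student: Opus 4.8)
\smallskip

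The strategy is to recast \(\Delta^{2}u=Q(x,u,\nabla u)+f\) as a special instance of the Lamm--Rivi\`ere system \eqref{eq:inhomogenous Lamm-Riviere system} — with \(V\equiv0\), \(w\equiv0\), \(\omega\equiv0\) and a potential \(F\) that encodes the critical term \(Q\) — and then to invoke Theorem \ref{thm:optimal global estimate for inho Lamm-Riviere} on small balls after rescaling. First I would set, on \(\{\nabla u\neq0\}\),
\[
F^{ij}_{\alpha}:=\frac{Q^{i}(x,u,\nabla u)\,\partial_{\alpha}u^{j}}{|\nabla u|^{2}},\qquad F:=0\ \text{on}\ \{\nabla u=0\},
\]
so that \(F\cdot\nabla u=Q\) a.e.\ and, by the critical growth, \(|F|\le|Q|/|\nabla u|\le C|\nabla u|^{3}\). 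Since \(u\in W^{2,2}(B_{1})\), the sharp Sobolev--Lorentz embedding (dimension four) gives \(\nabla u\in L^{4,2}_{\loc}(B_{1})\); combining this with the scaling identity \(\bigl\||\nabla u|^{3}\bigr\|_{L^{4/3,1}}=\|\nabla u\|_{L^{4,3}}^{3}\) and the inclusion \(L^{4,2}\subset L^{4,3}\), one obtains \(F\in L^{4/3,1}_{\loc}(B_{1})\) together with the quantitative bound \(\|F\|_{L^{4/3,1}(B_{\rho}(x_{0}))}\le C\|\nabla u\|_{L^{4,2}(B_{\rho}(x_{0}))}^{3}\) for all \(B_{\rho}(x_{0})\ssub B_{1}\).

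To secure the smallness hypothesis \eqref{eq:smallness assumption}, fix a compact \(K\ssub B_{1}\). As the \(L^{4,2}\)-norm is absolutely continuous, \(\sup_{x_{0}\in K}\|\nabla u\|_{L^{4,2}(B_{\rho}(x_{0}))}\to0\) as \(\rho\to0\), so there is \(\rho_{0}>0\) with \(\|F\|_{L^{4/3,1}(B_{\rho}(x_{0}))}<\ep\) whenever \(x_{0}\in K\) and \(\rho\le\rho_{0}\), \(\ep=\ep(p,m)\) being the threshold of Theorem \ref{thm:optimal global estimate for inho Lamm-Riviere}. For such a ball, rescale \(v(y):=u(x_{0}+\tfrac{\rho}{10}y)\) on \(B_{10}\); then \(v\in W^{2,2}(B_{10})\) solves \eqref{eq:inhomogenous Lamm-Riviere system} with \(V\equiv0\in W^{2,4/3}\), \(w\equiv0\in W^{1,4/3}\), \(\omega\equiv0\), with rescaled potential \(\widetilde F\) obeying \(\widetilde F\cdot\nabla v=\widetilde Q\), \(|\widetilde F|\le C|\nabla v|^{3}\) and \(\|\widetilde F\|_{L^{4/3,1}(B_{10})}=\|F\|_{L^{4/3,1}(B_{\rho}(x_{0}))}<\ep\) (the \(L^{4,2}\)-norm of a gradient and the \(L^{4/3,1}\)-norm of \(F\) are both scale invariant in \(\R^{4}\)), and with \(\widetilde f\in L^{p}(B_{10})\), \(\|\widetilde f\|_{L^{p}(B_{10})}\le C\|f\|_{L^{p}(B_{1})}\) because \(p>1\).

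For \(p\in(1,\tfrac43)\), Theorem \ref{thm:optimal global estimate for inho Lamm-Riviere} (its \(W^{4,p}\) assertion, whose extra hypotheses on \(V,w\) hold trivially) yields \(v\in W^{4,p}_{\loc}(B_{10})\), hence \(u\in W^{4,p}(B_{\rho/20}(x_{0}))\); covering \(K\) by finitely many such balls gives \(u\in W^{4,p}_{\loc}(B_{1})\). For \(p\ge\tfrac43\) I would then bootstrap: since \(f\in L^{q}(B_{1})\) for every \(q\in(1,\tfrac43)\), the previous step gives \(u\in W^{4,q}_{\loc}(B_{1})\), and for \(q\) close enough to \(\tfrac43\) the embedding \(W^{3,q}(\R^{4})\hookrightarrow L^{s}\), \(\tfrac1s=\tfrac1q-\tfrac34\), makes \(\nabla u\in L^{s}_{\loc}\) with \(s\ge4p\); hence \(Q\in L^{p}_{\loc}(B_{1})\), so \(\Delta^{2}u=Q+f\in L^{p}_{\loc}(B_{1})\), and the standard interior \(L^{p}\) theory of the bi-Laplacian gives \(u\in W^{4,p}_{\loc}(B_{1})\) (and \(u\in\bigcap_{q<\infty}W^{4,q}_{\loc}(B_{1})\) in the limiting case \(p=\infty\)), which contains the claimed regularity.

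The main obstacle is the first step: one must upgrade the elementary bound \(F\in L^{4/3}_{\loc}\) — immediate from \(\nabla u\in L^{4}_{\loc}\) — to the strictly finer \(F\in L^{4/3,1}_{\loc}\) required by Theorem \ref{thm:optimal global estimate for inho Lamm-Riviere}, and to do so with a norm that is small on small balls. This is precisely where the sharp embedding \(W^{1,2}(\R^{4})\hookrightarrow L^{4,2}(\R^{4})\), the behaviour of Lorentz quasinorms under powers, and the absolute continuity of the \(L^{4,2}\)-norm enter; the remainder is scaling bookkeeping, an appeal to Theorem \ref{thm:optimal global estimate for inho Lamm-Riviere}, and, for large \(p\), a short elliptic bootstrap.
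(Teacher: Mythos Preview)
Your approach is exactly the paper's: set $V=w=\omega=0$ and $F=|\nabla u|^{-2}Q(x,u,\nabla u)\nabla u$, then appeal to Theorem~\ref{thm:optimal global estimate for inho Lamm-Riviere}. You supply details the paper leaves implicit --- most importantly the verification that $F\in L^{4/3,1}_{\loc}$ via the sharp embedding $W^{1,2}(\R^4)\hookrightarrow L^{4,2}$ together with $\bigl\||\nabla u|^3\bigr\|_{L^{4/3,1}}=\|\nabla u\|_{L^{4,3}}^3$, and the smallness-by-localization step --- and your bootstrap for $p\ge 4/3$ in fact yields the slightly stronger conclusion $u\in W^{4,p}_{\loc}$ (not merely $W^{4,p-\delta}_{\loc}$), which is consistent with the statement.
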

\begin{proof}
  Take $V=0, w=0, \om=0$ and $F=|\na u|^{-2} Q(x,u,\na u)\na u$.
\end{proof}
We remark that the homogeneous case of Corollary \ref{coro:for system with critical growth} has been studied in \cite{Strzelecki-Goldstein-2008-4thOrderPDE}.

%{\color{red} We have no examples to show that our estimates are optimal! Maybe we need to construct biharmonic mappings by scaling, as is done by Sharp-Topping.}

%As a first application of Theorem \ref{thm:optimal global estimate for inho Lamm-Riviere}, we obtain the following corollary.

\begin{corollary}[Energy gap]\label{coro:energy gap}
	Let $u\in W^{2,2}(\R^4,\R^m)$ be a weak solution of \eqref{eq:inhomogenous Lamm-Riviere system} in $\R^4$ with $V\in W^{1,2}(\R^4,M_m\otimes \Lambda^1\R^{4})$, $w\in L^{2}(\R^4,M_m)$, $\om\in L^{2}(\R^4,so_m)$, $F\in L^{\frac{4}{3},1}(\R^4,M_m\otimes \Lambda^1\R^{4})$ and $f\equiv 0$. Then there exists some $\ep=\ep(m)>0$ such that if
$$\|V\|_{W^{1,2}(\R^4)}+\|w\|_{L^{2}(\R^4)}
+\|\om\|_{L^{2}(\R^4)}+\|F\|_{L^{4/3,1}(\R^4)}<\ep,$$
 then $u\equiv 0$ in $\R^4$.
\end{corollary}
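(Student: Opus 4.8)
The plan is to establish this Liouville-type statement by a blow-down argument: combine the scaling invariance of the Lamm-Rivi\`ere system on $\R^4$ with the decay estimate \eqref{eq: decay estimate for 4th order} of Theorem \ref{thm:optimal Holder exponent for inho Lamm-Riviere} (applicable here since $f\equiv 0$), and then let the radius tend to infinity. The first observation is that the global scale-invariant energy of $u$ is finite: from $u\in W^{2,2}(\R^4,\R^m)$ we have $\na u\in W^{1,2}(\R^4)$, so $\na u\in L^{4,2}(\R^4)$ by the Lorentz-improved Sobolev embedding $W^{1,2}(\R^4)\hookrightarrow L^{4,2}(\R^4)$, while $\De u\in L^2(\R^4)$; hence
\[
\Phi:=\|\na u\|_{L^{4,2}(\R^4)}+\|\De u\|_{L^2(\R^4)}<\infty .
\]

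Next I would rescale. For $\rho>0$ set $u_\rho(y):=u(\rho y)$ on $B_{10}$; then $u_\rho\in W^{2,2}(B_{10},\R^m)$ solves \eqref{eq:inhomogenous Lamm-Riviere system} with zero inhomogeneity and coefficients $V_\rho(y)=\rho V(\rho y)$, $w_\rho(y)=\rho^2 w(\rho y)$, $\om_\rho(y)=\rho^2\om(\rho y)$, $F_\rho(y)=\rho^3 F(\rho y)$. A change of variables shows that $\|\na V_\rho\|_{L^2(B_{10})}$, $\|V_\rho\|_{L^4(B_{10})}$, $\|w_\rho\|_{L^2(B_{10})}$, $\|\om_\rho\|_{L^2(B_{10})}$ and $\|F_\rho\|_{L^{4/3,1}(B_{10})}$ equal the norms of $V,w,\om,F$ over $B_{10\rho}$, hence are bounded by their norms over $\R^4$; together with $\|V_\rho\|_{L^2(B_{10})}\le|B_{10}|^{1/4}\|V_\rho\|_{L^4(B_{10})}$ and the Sobolev inequality $\|V\|_{L^4(\R^4)}\le C\|\na V\|_{L^2(\R^4)}$, this shows that the smallness hypothesis \eqref{eq:smallness assumption} is inherited by $u_\rho$ on $B_{10}$, uniformly in $\rho$, once $\ep=\ep(m)$ is chosen small enough. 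The same change of variables gives the scale invariance $\|\na u_\rho\|_{L^{4,2}(B_s)}=\|\na u\|_{L^{4,2}(B_{\rho s})}$ and $\|\De u_\rho\|_{L^2(B_s)}=\|\De u\|_{L^2(B_{\rho s})}$ for $0<s\le 1$.

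I would then apply \eqref{eq: decay estimate for 4th order} to $u_\rho$ — choosing any $p\in(1,\tfrac43)$, with $\al=4(1-\tfrac1p)\in(0,1)$ — to obtain, for $0<s<1$,
\[
\|\na u_\rho\|_{L^{4,2}(B_s)}+\|\De u_\rho\|_{L^2(B_s)}\le C s^{\al}\bigl(\|\na u_\rho\|_{L^{4,2}(B_1)}+\|\De u_\rho\|_{L^2(B_1)}\bigr),
\]
which, after undoing the scaling and writing $r=\rho s$, reads
\[
\|\na u\|_{L^{4,2}(B_r)}+\|\De u\|_{L^2(B_r)}\le C\Bigl(\tfrac r\rho\Bigr)^{\al}\,\Phi\qquad\text{for all }0<r<\rho .
\]
Fixing $r$ and letting $\rho\to\infty$ forces the left side to vanish; as $r>0$ is arbitrary and $\bigcup_{r>0}B_r=\R^4$, we get $\na u\equiv 0$ on $\R^4$. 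Hence $u$ is constant, and since $u\in W^{2,2}(\R^4)\subset L^2(\R^4)$ this constant is $0$, i.e.\ $u\equiv 0$.

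The main obstacle is the uniformity in the second step: one must check that \eqref{eq:smallness assumption} is preserved under the rescaling with thresholds independent of $\rho$, so that the constant in \eqref{eq: decay estimate for 4th order} does not deteriorate as $\rho\to\infty$. This is exactly where the conformal invariance of the fourth-order system in dimension four enters: the quantities $\|\na V\|_{L^2}$, $\|V\|_{L^4}$, $\|w\|_{L^2}$, $\|\om\|_{L^2}$, $\|F\|_{L^{4/3,1}}$ are invariant under $y\mapsto\rho y$, while the lower-order $L^2$-part of $\|V_\rho\|_{W^{1,2}(B_{10})}$ is harmless since it is controlled on the fixed ball $B_{10}$ by H\"older and Sobolev. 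Everything beyond this is a soft limiting argument.
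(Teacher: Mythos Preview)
Your proof is correct. Both you and the paper prove the corollary by a blow-down/scaling argument, but you invoke different inputs: the paper uses the local estimate coming from Theorem~\ref{thm:optimal global estimate for inho Lamm-Riviere}, namely
\[
\|\nabla^2 u\|_{L^{\bar p}(B_{1/2})}+\|\nabla u\|_{L^{4p/(4-3p)}(B_{1/2})}\le C\|u\|_{L^1(B_1)}\qquad(\bar p=\tfrac{2p}{2-p}),
\]
and then rescales to obtain an estimate on $B_R$ with a negative power of $R$ in front of $\|u\|_{W^{2,2}(\R^4)}$; letting $R\to\infty$ forces $\nabla u\equiv 0$. You instead feed the rescaled solution $u_\rho$ directly into the decay estimate \eqref{eq: decay estimate for 4th order} of Theorem~\ref{thm:optimal Holder exponent for inho Lamm-Riviere}, which already produces a factor $s^\alpha$ on scale-invariant quantities, and then let $\rho\to\infty$.

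Your route is slightly more economical: it only uses Theorem~\ref{thm:optimal Holder exponent for inho Lamm-Riviere} and the conformal invariance of the relevant norms, bypassing the heavier higher-order regularity machinery of Theorem~\ref{thm:optimal global estimate for inho Lamm-Riviere}. The paper's route, on the other hand, packages the scaling step into a single clean inequality with $\|u\|_{L^1}$ on the right, which is perhaps more in keeping with the logical flow of Section~\ref{sec:optimal global estimates}. Your handling of the one non-scale-invariant piece, $\|V_\rho\|_{L^2(B_{10})}$, via H\"older and the Sobolev inequality $\|V\|_{L^4(\R^4)}\lesssim\|\nabla V\|_{L^2(\R^4)}$, is the right way to close that gap uniformly in $\rho$.
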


We can also derive a compactness result under the assumption that $f\in L^p$ for some $p>1$. However, for further applications,  we would like to weaken the $L^p$ integrability assumption on $f$ by assuming $f\in L\log L(B_{10})$. Note that under this weaker integrability assumption, we cannot get an effective decay estimate (as in Theorem \ref{thm:optimal Holder exponent for inho Lamm-Riviere}) to conclude H\"older continuity of weak solutions of \eqref{eq:inhomogenous Lamm-Riviere system}. However, we do have the following version of Theorem \ref{thm:optimal global estimate for inho Lamm-Riviere} corresponding to the boarderline case $p=1$, which can be regarded as a natural extension of \cite[Theorem 1.6]{Sharp-Topping-2013-TAMS} to the fourth order Lamm-Rivi\`ere system.
\begin{theorem}\label{thm:optimal local estimate LlogL}
Let $u\in W^{2,2}(B_{10},\R^m)$ be a weak solution of \eqref{eq:inhomogenous Lamm-Riviere system} with $f\in L\log L(B_{10},\R^m)$. Then, there exist some $\ep=\ep(m)>0$ and $C=C(m)>0$ such that if the smallness condition \eqref{eq:smallness assumption} is satisfied with $\ep_m=\ep$, then $$u\in W^{3,\frac{4}{3},1}_{\loc}(B_{10})$$ with
 \begin{equation}\label{eq:optimal LlogL}
 	\|u\|_{W^{3,\frac{4}{3},1}(B_{\frac{1}{2}})}\le C\left(\|f\|_{L\log L(B_{1})}+\|u\|_{L^{1}(B_{1})}\right).
 \end{equation}
 In particular, $u$ is continuous by the Sobolev embedding $W^{3,\frac{4}{3},1}(B_{10})\subset C(B_{10})$.
\end{theorem}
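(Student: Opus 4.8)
The plan is to mimic the proof of Theorem~\ref{thm:optimal global estimate for inho Lamm-Riviere} but to replace every use of $L^p$-Calder\'on--Zygmund estimates by their borderline Lorentz counterparts, using the fact that $L\log L$ is exactly the right endpoint space for the Riesz potential to land in $L^{4/3,1}$ in dimension four. Concretely, under the smallness assumption \eqref{eq:smallness assumption} we invoke the Lamm--Rivi\`ere theorem to pass to the conservation law \eqref{eq:conservation law of Lamm Riviere}, $\De(A\De u)=\divergence(K)+Af$ in $B_8$. The key input is that $K\in L^{4/3,1}(B_8)$ with norm controlled by $\|\na u\|_{L^{4,2}}+\|\De u\|_{L^2}$ (each term in \eqref{eq: K} being a product of an $L^\infty\cap W^{2,2}$ or $W^{1,4/3}$ factor with $\na u\in L^{4,2}$ or $\De u\in L^2$, via Lorentz--H\"older), and that $Af\in L\log L(B_8)$ since $A\in L^\infty$.

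First I would fix a good cutoff and write $A\De u = h_1 + h_2 + h_3$ on $B_4$, where $h_1$ solves $\De h_1 = \divergence(\chi K)$, $h_2$ solves $\De h_2 = \chi Af$, and $h_3$ is harmonic in $B_4$; here $\chi$ is a cutoff that is $1$ on $B_2$. For $h_1$, the Calder\'on--Zygmund estimate in Lorentz spaces gives $\na h_1\in L^{4/3,1}$, hence $h_1\in L^{4,1}\subset L^{4,2}$ with the right bound. For $h_2$, the borderline potential estimate (this is the Adams-type result advertised in the keywords: the Riesz potential $I_2$ maps $L\log L(\R^4)$ into, after one derivative, $L^{4/3,1}$, equivalently $I_1$ maps $L\log L$ into $L^{4/3,1}$ in dimension four) gives $h_2\in W^{1,4/3,1}$ locally, with norm bounded by $\|f\|_{L\log L}$. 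For $h_3$, interior estimates for harmonic functions upgrade the $L^1$ (or $L^{4,2}$) control on a larger ball to $W^{1,4/3,1}$ control on a smaller one. Adding up, $A\De u\in W^{1,4/3,1}(B_1)$, and since $A^{-1}\in L^\infty\cap W^{2,2}$ (from $\dist(A,SO_m)$ small) the product rule in Lorentz--Sobolev spaces yields $\De u\in W^{1,4/3,1}(B_1)$ with
\[
\|\De u\|_{W^{1,4/3,1}(B_1)}\le C\bigl(\|\na u\|_{L^{4,2}(B_2)}+\|\De u\|_{L^2(B_2)}+\|f\|_{L\log L(B_2)}\bigr).
\]
Standard elliptic regularity (now in Lorentz--Sobolev scale) then promotes this to $u\in W^{3,4/3,1}_{\loc}$, and a covering/iteration argument together with the interpolation inequality absorbing $\|\na u\|_{L^{4,2}}+\|\De u\|_{L^2}$ into $\|u\|_{L^1(B_1)}$ plus a small multiple of the left-hand side produces \eqref{eq:optimal LlogL}. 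The final continuity assertion is then immediate from the Lorentz--Sobolev embedding $W^{3,4/3,1}\hookrightarrow C^0$ in dimension four, which is already recorded in the excerpt.

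A few technical points deserve care. One must first reduce to the small-energy regime: the smallness hypothesis is assumed outright here, so unlike Theorem~\ref{thm:optimal Holder exponent for inho Lamm-Riviere} no rescaling/decay machinery is needed, which simplifies matters considerably. One also needs the a priori knowledge that $\na u\in L^{4,2}_{\loc}$ and $\De u\in L^2_{\loc}$ with controlled norms to make sense of $K\in L^{4/3,1}$ — this follows from $u\in W^{2,2}$ and the Sobolev embedding $W^{2,2}(B^4)\hookrightarrow W^{1,4}\hookrightarrow W^{1,(4,2)}$? — more precisely one uses that $W^{2,2}\hookrightarrow W^{1,(4,2)}$ in four dimensions, so this is automatic. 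The product estimates in Lorentz spaces ($L^\infty\cdot L^{4/3,1}\subset L^{4/3,1}$, $L^{4,2}\cdot L^{4/3,?}$, $W^{1,4/3}\cdot L^{4,\cdot}$, etc.) and the Lorentz--Sobolev multiplication rules for the terms $\na A\,\De u$, $\De A\,\na u$, $A\na(V\cdot\na u)$ in \eqref{eq: K} are the routine but slightly delicate bookkeeping; they are collected in the preliminaries section referenced in the excerpt.

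The main obstacle I anticipate is the borderline potential estimate for the $h_2$ piece: one needs that convolution with the Newtonian kernel sends $L\log L(\R^4)$ to a function whose gradient lies in $L^{4/3,1}$, not merely in $L^{4/3}$ or $L^{4/3,\infty}$. This is exactly the endpoint case of the Adams--O'Neil potential estimates, and getting the Lorentz--Zygmund target right (rather than the weak-type bound one gets from plain $L^1$ data) is what forces the hypothesis $f\in L\log L$ rather than $f\in L^1$; handling the local-vs-global passage (the kernel is only locally the Newtonian potential after cutting off) and making sure the $L\log L$ norm — which is not a norm on bounded domains without care about normalization — is used consistently is the delicate part. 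Everything else is a Lorentz-space transcription of the $L^p$ argument already carried out for Theorem~\ref{thm:optimal global estimate for inho Lamm-Riviere}.
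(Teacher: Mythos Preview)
Your proposal is correct and follows essentially the same route as the paper: conservation law, decompose $A\De u$ into a $\divergence K$ piece, an $Af$ piece, and a harmonic remainder, show each lies in $W^{1,4/3,1}$, and then iterate to close the estimate in terms of $\|f\|_{L\log L}+\|u\|_{L^1}$. The one noteworthy difference is in how the $Af$ piece is handled: the paper passes through the local Hardy space, using $L\log L\subset h^1$ so that the second derivatives of $I_2(Af)$ lie in $L^1$ by Calder\'on--Zygmund theory on $h^1$, and then embeds $W^{2,1}\hookrightarrow W^{1,4/3,1}$; you instead invoke the endpoint Adams/O'Neil potential estimate $I_1\colon L\log L\to L^{4/3,1}$ directly. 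These are equivalent mechanisms for the same embedding, so nothing substantive is lost or gained either way. Your hesitation about $W^{2,2}\hookrightarrow W^{1,4}$ is warranted (that fails at the critical exponent), but the Lorentz refinement $W^{2,2}\hookrightarrow W^{1,4,2}$ does hold and is all you need for $K\in L^{4/3,1}$.
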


As an application of Theorem \ref{thm:optimal local estimate LlogL}, we obtain the following compactness result; compare it with \cite[Theorem 1.3]{Guo-Xiang-2019-Boundary}.
\begin{theorem}\label{thm:compactness}
Let $\{u_{n}\}_n\subset W^{2,2}(B_{10},\R^m)$ be a sequence of weak solutions of
\begin{eqnarray*}\label{}
\De^{2}u_n=\De(V_n\cdot\na u_n)+{\rm div}(w_n\na u_n)+(\na\om_n+F_n)\cdot\na u_n+f_n. &  &
\end{eqnarray*}
Suppose there exists a constant $\La>0$ such that
\[\sup_n \left(\|f_n\|_{L\log L(B_{10})}+\|u_n\|_{L^{1}(B_{10})}\right)\le \Lambda.\]
Then, there exist some $\ep=\ep(m)>0$  and  a mapping $u\in W^{2,2}(B_{10}, \R^m)$ such that, if the sequences $\{V_n,w_n,\om_n, F_n\}_{n\in \N}$ satisfy \eqref{eq:smallness assumption} with a common $\ep_m=\ep$, then after passing by to a subsequence,
\begin{eqnarray*}\label{}
 u_{n}\to u&  & \text{in } W_{\loc}^{2,2}(B_{10},\R^m).
\end{eqnarray*}
\end{theorem}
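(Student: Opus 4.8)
The plan is to deduce Theorem~\ref{thm:compactness} from the uniform estimate in Theorem~\ref{thm:optimal local estimate LlogL} together with a standard extraction-and-passage-to-the-limit argument, the only genuine difficulty being the stability of the conservation-law (or frame) construction under the limit and the passage to the limit in the nonlinear product terms $V_n\cdot\na u_n$, $w_n\na u_n$, etc.

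\emph{Step 1: uniform higher-order bound and first extraction.} Fix $\ep=\ep(m)$ to be the minimum of the $\ep$'s appearing in Theorem~\ref{thm:optimal local estimate LlogL} and in the Lamm--Rivi\`ere theorem, so that, assuming $\eqref{eq:smallness assumption}$ holds for the $n$-th data with this common $\ep_m=\ep$, Theorem~\ref{thm:optimal local estimate LlogL} gives
\[
\|u_n\|_{W^{3,\frac43,1}(B_{1/2})}\le C\big(\|f_n\|_{L\log L(B_1)}+\|u_n\|_{L^1(B_1)}\big)\le C\La .
\]
The same estimate holds on any ball $B_{2r}(x_0)\ssub B_{10}$ after rescaling, so $\{u_n\}$ is bounded in $W^{3,\frac43,1}_{\loc}(B_{10})$, hence in $W^{3,\frac43}_{\loc}$, and in particular (since the smallness hypothesis also bounds $V_n$ in $W^{1,2}$, $w_n$ in $L^2$, $\om_n$ in $L^2$, $F_n$ in $L^{4/3,1}$ uniformly) we may pass to a subsequence so that $u_n\wto u$ weakly in $W^{3,\frac43}_{\loc}$, $V_n\wto V$ in $W^{1,2}_{\loc}$, $w_n\wto w$ in $L^2_{\loc}$, $\om_n\wto\om$ in $L^2_{\loc}$, $F_n\wto F$ in $L^{4/3}_{\loc}$, and $f_n\wto f$ in $L^1_{\loc}$. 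By Rellich--Kondrachov, $u_n\to u$ strongly in $W^{2,q}_{\loc}$ for every $q<\frac{4\cdot(4/3)}{4-4/3}=2$ — more importantly, strongly in $W^{1,q}_{\loc}$ for every $q<\infty$ and in $W^{2,q}_{\loc}$ for every $q<2$, and pointwise a.e.\ (along a further subsequence). One checks that $u\in W^{2,2}(B_{10},\R^m)$ is itself a weak solution of the limiting system with coefficients $V,w,\om,F,f$: the products pass to the limit because, e.g., $\na u_n\to\na u$ strongly in every $L^q_{\loc}$, $q<\infty$, while $V_n\wto V$ in $L^4_{\loc}$ (Sobolev), so $V_n\cdot\na u_n\wto V\cdot\na u$ in $L^{q}_{\loc}$ for suitable $q>1$, and similarly for the other three terms; then one passes to the limit in the distributional formulation of \eqref{eq:inhomogenous Lamm-Riviere system}.

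\emph{Step 2: upgrading to strong $W^{2,2}_{\loc}$ convergence.} The weak $W^{3,\frac43}$ bound alone only yields strong $W^{2,q}$ for $q<2$, so the claimed strong $W^{2,2}_{\loc}$ convergence requires a further argument. The natural route is to use the conservation law \eqref{eq:conservation law of Lamm Riviere}: on a small ball $B_{2r}(x_0)$ where the rescaled smallness holds, apply the Lamm--Rivi\`ere theorem to each $n$ to obtain $A_n\in W^{2,2}\cap L^\infty$, $B_n\in W^{1,4/3}$ with uniform bounds \eqref{eq: A-B small}, so that $\De(A_n\De u_n)=\divergence(K_n)+A_nf_n$. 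Extract $A_n\wto A$ in $W^{2,2}_{\loc}$ (hence $A_n\to A$ in $C^0_{\loc}$ and in $W^{1,q}_{\loc}$ for all $q<\infty$, using $W^{2,2}\hookrightarrow C^0$ fails in $\R^4$ — instead use $W^{2,2}\hookrightarrow W^{1,4}$ and $A_n$ bounded in $L^\infty$ with $\dist(A_n,SO_m)$ small, which gives $A_n\to A$ strongly in every $L^q_{\loc}$ and a.e.), and $B_n\wto B$ in $W^{1,4/3}_{\loc}$. The point is that $A_n\De u_n=A_n\De u_n$ is bounded in $W^{1,\frac43,1}_{\loc}$ uniformly (this is exactly what the proof of Theorem~\ref{thm:optimal local estimate LlogL} produces before invoking elliptic regularity on $u_n$), so $A_n\De u_n\to A\De u$ strongly in $L^q_{\loc}$ for some $q>\frac43$, in fact in $L^2_{\loc}$ by the Lorentz--Sobolev embedding $W^{1,\frac43,1}(\R^4)\hookrightarrow L^{2,1}(\R^4)\hookrightarrow L^2$; since $A_n\to A$ in $L^\infty_{\loc}$ and $A_n$ is uniformly invertible (distance to $SO_m$ small), this forces $\De u_n\to\De u$ strongly in $L^2_{\loc}$. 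Combined with $\na u_n\to\na u$ in $L^4_{\loc}$ and elliptic $L^2$-estimates $\|u_n-u_m\|_{W^{2,2}(B_r)}\le C(\|\De(u_n-u_m)\|_{L^2(B_{2r})}+\|u_n-u_m\|_{L^2(B_{2r})})$ this yields $u_n\to u$ in $W^{2,2}_{\loc}(B_{10})$, and a covering argument promotes this from small balls to all of $B_{10}$.

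\emph{Main obstacle.} The delicate point is Step~2: one must show that the compactness already present \emph{inside} the proof of Theorem~\ref{thm:optimal local estimate LlogL} — namely that $A_n\De u_n$ is controlled in the Lorentz--Sobolev space $W^{1,\frac43,1}_{\loc}$ uniformly in $n$, with $K_n$ bounded in $L^{\frac43,1}_{\loc}$ and $A_nf_n$ bounded in $L\log L_{\loc}$ — actually \emph{converges} rather than merely remaining bounded. This requires verifying that each term of $K_n$ in \eqref{eq: K} (e.g.\ $2\na A_n\De u_n$, $A_n\na(V_n\cdot\na u_n)$, $B_n\cdot\na u_n$) passes to the limit strongly in $L^{\frac43}_{\loc}$ — the worrisome ones are the bilinear terms in which \emph{both} factors converge only weakly (such as $\na A_n\cdot\De u_n$, with $\na A_n\wto\na A$ in $L^4$ and $\De u_n\wto\De u$ in $L^2$). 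These are handled by the compensation structure: $\na A_n$ is bounded in $W^{1,2}$ hence strongly convergent in $L^q$ for all $q<4$, while $\De u_n$ is bounded in $L^2$, so the product converges strongly in $L^{q'}$ for $q'<\frac43$; a small bootstrap (or the Lorentz refinement of Rellich) closes the gap to $L^{\frac43,1}$. Once the strong $L^2_{\loc}$ convergence of $\De u_n$ is secured, the rest is the routine elliptic/covering argument sketched above. A lighter alternative that avoids re-running the conservation-law machinery is to subtract the equations for $u_n$ and $u_m$, test against $u_n-u_m$ times a cutoff, and use the uniform $W^{3,\frac43}_{\loc}$ bound to absorb the critical terms — but this still relies on the same compensated-compactness input for the quadratic-in-gradient pieces, so the essential obstacle is unchanged.
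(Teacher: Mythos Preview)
Your Step~1 is correct and matches the paper's opening move. The genuine gap is in Step~2, at the sentence ``so $A_n\De u_n\to A\De u$ strongly \ldots\ in $L^2_{\loc}$ by the Lorentz--Sobolev embedding $W^{1,\frac43,1}(\R^4)\hookrightarrow L^{2,1}(\R^4)\hookrightarrow L^2$''. This embedding is continuous but \emph{not} compact: the exponent $2$ is critical for $W^{1,4/3}$ in $\R^4$, and the second Lorentz index does not alter the scaling (the concentrating sequence $\phi_k(x)=k^2\phi(kx)$ is bounded in $W^{1,4/3,1}$ but converges to $0$ only weakly in every $L^{2,q}$). Hence the uniform $W^{1,4/3,1}$ bound on $A_n\De u_n$ yields strong $L^q_{\loc}$ convergence only for $q<2$. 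Your fallback route (show $K_n\to K$ strongly, then re-solve) hits the same wall: from Rellich you only get $K_n\to K$ in $L^{q'}$ for $q'<\frac43$, and after applying $I_1$ this gives the potential part in $L^q$ for $q<2$; to push $K_n$ to $L^{4/3,1}$ you would need, via Lorentz--H\"older, $\De u_n\to\De u$ strongly in $L^2$ itself, which is circular. The phrase ``a small bootstrap \ldots\ closes the gap'' is therefore unjustified.

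The paper closes the gap by a different mechanism that you do not invoke: a \emph{no-concentration} argument based on the borderline decay estimate of Proposition~\ref{prop: decay estimate for the borderline case}. Rescaled, and combined with Lemma~\ref{lemma:ST 2.1} (which gives $\|f_n\|_{L^1(B_r)}\lesssim (\log\tfrac1r)^{-1}\|f_n\|_{L\log L}$), that proposition yields, uniformly in $n$,
\[
\|\De u_n\|_{L^2(B_{\tau r}(x))}+\|\na u_n\|_{L^{4,2}(B_{\tau r}(x))}\le\tfrac12\Big(\|\De u_n\|_{L^2(B_{r}(x))}+\|\na u_n\|_{L^{4,2}(B_{r}(x))}\Big)+C\big(\log\tfrac1r\big)^{-1/2}\La,
\]
with a fixed $\tau\in(0,1)$. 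Sending $r\to 0$ gives $\lim_{r\to 0}\limsup_n\|\De u_n\|_{L^2(B_r(x))}=0$ at every point $x$. Together with the uniform bound in $W^{2,2,1}_{\loc}$ (which follows from $W^{3,4/3,1}_{\loc}$), this rules out concentration of $\na^2 u_n$ in $L^2$, and a general compactness lemma (\cite[Lemma~A.6]{Sharp-Topping-2013-TAMS}) then upgrades weak $L^2$ convergence of $\na^2 u_n$ to strong. The decay estimate in the $L\log L$ regime is thus the essential missing input in your proposal.
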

\subsection{Strategy of the proof}

Before ending this section, we would like to make some comments on the techniques that we shall use in the proofs of Theorems \ref{thm:optimal Holder exponent for inho Lamm-Riviere} and \ref{thm:optimal global estimate for inho Lamm-Riviere}. We follow the scheme of Sharp and Topping \cite{Sharp-Topping-2013-TAMS} and devide  the proofs into three steps.
\begin{enumerate}
  \item In the first step, we  derive the  decay estimates in Theorem \ref{thm:optimal Holder exponent for inho Lamm-Riviere} via the conservation law and  Hodge decomposition, from which H\"older continuity follows.  The decay estimates show that $\na^2 u$ and $\na u$ belong to some Morrey spaces.
  \item In the second step, we combine the above fact, together with the Riesz potential theory of Adams \cite{Adams-1975} (see Lemma \ref{lemma:improved Riesz potential}), to deduce an almost optimal higher order Soboelv regularity. A bootstrapping  argument is also applied.
  \item In the last step, we show that all the concerned  local estimates are uniform with respect to the parameters so that we can pass to the limit to conclude the optimal higher order Sobolev regularity with the critical exponent.
\end{enumerate}

As we are dealing with fourth order equation, the situation becomes more complicated than that of second order equation, especially in the first  and the last step,
  which are quite different from that of Sharp and Topping \cite{Sharp-Topping-2013-TAMS}.
The first step was the key step in their proofs. They adopted a very delicate iteration argument to derive sharp decay estimates. In order to run the iteration procedure,   a very precise control on  some coefficients of related  estimates is needed so that the key coefficients are sufficiently small. A fact that plays a crucial role in their proof is the (nondecreasing) monotonicity of the average function
 \[r\mapsto \frac{1}{r^2}\int_{B_r(x)} f\]
whenever $f$ is a subharmonic function.
  Such a monotonicity property seems  unknown in higher order cases. In particular, for a biharmonic function $h$ in $\R^4$,  we do not know whether the average function $r\mapsto \frac{1}{r^4}\int_{B_r(x)} |h|^k$ ($k\ge 1$) is monotone or not. Furthermore, we have to consider not only the energy of $\nabla u$, but also the energy of $\Delta u$. Thus,   it seems impossible to gain optimal coefficients simultaneously in front of the decay of these two energies.  Consequently, the iteration procedure of Sharp and Topping \cite[Proof of Lemma 7.3]{Sharp-Topping-2013-TAMS} fails to apply here. To overcome this difficulty,
we borrow some ideas from \cite{Wang-Zheng-2012-JFA} and \cite{Guo-Xiang-2019-Boundary}, and use Hodge decomposition together with Lorentz-Sobolev embedding to derive the optimal decay. %To better explain our approach,
To emphasize the differences between these two approaches, we included an alternative proof of the decay estimate of Sharp-Topping for the inhomogeneous system \eqref{eq:inhomogenous Lamm-Riviere system} in Section \ref{sec:reprove Sharp-Topping}.

Another severe technical difficulty occurs in the last step. In the case of Sharp-Topping, they run a similar scaling and iteration argument as in the first step due to the well control of coefficient of decay of energy of $\nabla u$. Moreover, since in the second order case they have proved that every solution belongs to $W^{2,p}_{\loc}$, the equation $-\De u=\Om\cdot \na u$ can be used as a pointwise identity to deduce   estimate for $\na^2 u$ directly. In the fourth order case, we cannot run a similar iteration argument as we do not have good enough control on the coefficients of the decay of energies of $\nabla u$ and $\nabla^2 u$. Furthermore, we have no fourth order regularity for the solutions, and thus equation only has the meaning of distributions.  To overcome these difficulties,  we apply a duality trick to obtain good control on the decay of energy of $\nabla^2 u$, and then apply the similar scaling and iteration procedure as Sharp and Topping. During the process, we will need several different  uniform estimates, in  which  the usually used elliptic regularity theory has to be refined. These difficulty makes the problem become more interesting.

%In a future coming work, we shall apply the regularity theorems to establish energy identity for sequences of weak solutions of the inhomogenuous Lamm-Rivi\`ere system \eqref{eq:inhomogenous Lamm-Riviere system}.
%We also expect that our method can be extended to even higher order systems such as the poly-harmonic type systems deduced recently by \textbf{the two German's work}.

%but in a different order. In their first step, they directly derive the almost optimal Sobolev regularity for weak solutions (without the use of optimal decay). Then they quantify the almost optimal second order Sobolev regularity in the second step. In the last step, they use these two estimates, together with a very clever iteration procedure, to produce the optimal decay estimate (see Theorem \ref{thm:Sharp-Topping} below).

This paper is organized as follows. Section \ref{sec: preliminaries} contain some preliminaries and auxiliary results for later proofs. In Section \ref{sec:reprove Sharp-Topping}, we present an alternative proof of a key result of Sharp-Topping \cite{Sharp-Topping-2013-TAMS}, which leads to \cite[Theorem 1.1]{Sharp-Topping-2013-TAMS}. Our main theorems are proved in Sections \ref{sec:Decay estimate forth system}, \ref{sec:higher order Sobolev regularity} and \ref{sec:optimal global estimates}. In the final section, Section \ref{sec:compactness}, we prove the compactness theorem. We also add two appendices to include certain auxiliary results that was used in the proofs of our main theorems.

Our notations are standard.
By $A\lesssim B$ we mean there exists a universal constant $C>0$ such that $A\le CB$.

\section{Preliminaries and auxiliary results}\label{sec: preliminaries}

\subsection{Function spaces and related}\label{sec:relevant function spaces}
Let $\Om\subset\R^{n}$ be a bounded smooth  domain, $1\le p<\wq$
and $0\le s<n$. The \emph{Morrey space} $M^{p,s}(\Om)$ consists
of functions $f\in L^{p}(\Om)$ such that
\[
\|f\|_{M^{p,s}(\Om)}\equiv\left(\sup_{x\in\Om,r>0}r^{-s}\int_{B_{r}(x)\cap\Om}|f|^{p}\right)^{1/p}<\wq.
\]
Denote by $L_*^p$  the weak $L^p$ space and define the \emph{weak Morrey space} $M^{p,s}_*(\Omega)$ as the space of functions $f\in L^p_*(\Omega)$ such that
$$\|f\|_{M^{p,s}_*(\Om)}\equiv \left(\sup_{x\in\Om,r>0}r^{-s}\|f\|^p_{L^p_{\ast}(B_r(x)\cap \Omega)}\right)^{1/p}<\wq,$$
where
$$\|f\|^p_{L^p_{\ast}(B_r(x)\cap \Omega)}\equiv\sup_{t>0}t^p\Big|\big\{x\in B_r(x)\cap \Omega: |f(x)|> t\big \}\Big|.$$

For a measurable function $f\colon \Om\to\R$, denote by $\de_{f}(t)=|\{x\in\Om:|f(x)|>t\}|$
its distributional function and by $f^{\ast}(t)=\inf\{s>0:\de_{f}(s)\le t\}$,
$t\ge0$, the nonincreasing rearrangement of $|f|$. Define
\begin{eqnarray*}
	f^{\ast\ast}(t)\equiv\frac{1}{t}\int_{0}^{t}f^{\ast}(s)\D s, &  & t>0.
\end{eqnarray*}
The\emph{ Lorentz space} $L^{p,q}(\Om)$ ($1<p<\wq,1\le q\le\wq$)
is the space of measurable functions $f:\Om\to\R$ such that
\[
\|f\|_{L^{p,q}(\Om)}\equiv\begin{cases}
\left(\int_{0}^{\wq}(t^{1/p}f^{\ast\ast}(t))^{q}\frac{\D t}{t}\right)^{1/q}, & \text{if }1\le q<\wq,\\
\sup_{t>0}t^{1/p}f^{\ast\ast}(t) & \text{if }q=\wq
\end{cases}
\]
is finite.

%Let $1\le p<\wq$ and $1\le q\le\wq$, $0<s<n$.
%%Denote by $L^{p,q}(\Om)$ the classical Lorentz space.
%The \emph{Lorentz-Morrey space} $LM^{p,q,s}(\Om)$ consists of functions $u\in L^{p,q}(\Om)$ such that
%\[
%\|u\|_{LM^{p,q,s}(\Om)}^{p}\equiv\sup_{x\in\Om,r>0}r^{-s}\|u\|_{L^{p,q}(B_{r}(x)\cap\Om)}^{p}.
%\]
%It is easy to see that $LM^{p,p,s}=M^{p,s}$, $LM^{p,\infty,s}=M_{*}^{p,s}$ and since $L^{p,q_{1}}\subset L^{p,q_{2}}$
%if $q_{1}<q_{2}$, we have
%\[
%LM^{p,q_{1},s}\subset LM^{p,q_{2},s}.
%\]
%It is straightforward to verify that for each $k\in \N$, we have
%$$\|u^k\|_{LM^{p,q,s}}=\|u\|_{LM^{kp,kq,s}}.$$

  It is well-known that $L^{p,p}=L^p$ and $L^{p,\wq}=L^p_{\ast}.$ We will need the following H\"older's inequality in Lorentz spaces.

\begin{proposition} \label{prop: Lorentz-Holder inequality} (\cite{ONeil-1963})
	Let $1<p_{1},p_{2}<\wq$ and $1\le q_{1},q_{2}\le\wq$ be such that
	\begin{eqnarray*}
		\frac{1}{p}=\frac{1}{p_{1}}+\frac{1}{p_{2}}\le1 & \text{and} & \frac{1}{q}=\frac{1}{q_{1}}+\frac{1}{q_{2}}\le1.
	\end{eqnarray*}
	Then, $f\in L^{p_{1},q_{1}}(\Om)$ and $g\in L^{p_{2},q_{2}}(\Om)$
	implies $fg\in L^{p,q}(\Om)$. Moreover,
	\[
	\|fg\|_{L^{p,q}(\Om)}\le\|f\|_{L^{p_{1},q_{1}}(\Om)}\|g\|_{L^{p_{2},q_{2}}(\Om)}.
	\]
\end{proposition}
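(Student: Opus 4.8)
The plan is to reproduce the classical argument of O'Neil \cite{ONeil-1963}, which reduces the whole statement to a single rearrangement inequality, followed by one application of Hardy's inequality and one of H\"older's inequality on the half-line $((0,\wq),\D t/t)$.

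\emph{Step 1.} I would first invoke the standard property of decreasing rearrangements
\[
\int_0^t (fg)^{\ast}(s)\,\D s\ \le\ \int_0^t f^{\ast}(s)\,g^{\ast}(s)\,\D s\qquad(t>0),
\]
equivalently $t\,(fg)^{\ast\ast}(t)\le\int_0^t f^{\ast}(s)\,g^{\ast}(s)\,\D s$. This follows from the identity $\int_0^t h^{\ast}=\sup\{\int_E|h|:\ |E|\le t\}$ together with the Hardy--Littlewood inequality applied to $f\chi_E$ and $g\chi_E$, since $(f\chi_E)^{\ast}\le f^{\ast}$ and $(f\chi_E)^{\ast}\equiv 0$ on $[|E|,\wq)$.

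\emph{Step 2.} Multiplying by $t^{1/p-1}$ gives $t^{1/p}(fg)^{\ast\ast}(t)\le t^{1/p-1}\int_0^t f^{\ast}(s)\,g^{\ast}(s)\,\D s$, and since $p>1$, Hardy's inequality on $((0,\wq),\D t/t)$ yields
\[
\| t^{1/p}(fg)^{\ast\ast}(t)\|_{L^{q}(\D t/t)}\ \le\ \tfrac{p}{p-1}\,\| t^{1/p}f^{\ast}(t)\,g^{\ast}(t)\|_{L^{q}(\D t/t)}.
\]
I would then split $t^{1/p}=t^{1/p_1}\,t^{1/p_2}$, which is exactly the hypothesis $\tfrac1p=\tfrac1{p_1}+\tfrac1{p_2}$, and apply H\"older's inequality in $L^{q}(\D t/t)$ to the pair $(q_1,q_2)$, permissible because $\tfrac1q=\tfrac1{q_1}+\tfrac1{q_2}$. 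Using $f^{\ast}\le f^{\ast\ast}$ to dominate the two resulting factors by $\|f\|_{L^{p_1,q_1}}$ and $\|g\|_{L^{p_2,q_2}}$, and recalling that the left-hand side above \emph{is} $\|fg\|_{L^{p,q}(\Om)}$, one arrives at
\[
\|fg\|_{L^{p,q}(\Om)}\ \lesssim\ \|f\|_{L^{p_1,q_1}(\Om)}\,\|g\|_{L^{p_2,q_2}(\Om)}
\]
with a constant depending only on the exponents; the sharp constant is the content of O'Neil's more delicate rearrangement bookkeeping in \cite{ONeil-1963}. The cases $q_1=\wq$ and/or $q_2=\wq$ are handled identically, with suprema replacing integrals.

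\emph{Main obstacle.} The only genuine subtlety is that the norm in the statement is built from the maximal rearrangement $f^{\ast\ast}$, which is \emph{not} submultiplicative; one is therefore forced to pass through $f^{\ast}$ in Step 1 and then re-install $f^{\ast\ast}$ on the left-hand side via Hardy's inequality in Step 2. This is exactly where the assumption $p>1$ enters, and the argument genuinely breaks down at the borderline $p=1$. Everything else is routine manipulation of rearrangements.
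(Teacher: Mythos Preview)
The paper does not prove this proposition at all; it merely cites O'Neil \cite{ONeil-1963} and uses the inequality as a black box throughout. Your argument is the standard rearrangement proof and is correct in structure, yielding the inequality with a multiplicative constant $p'=p/(p-1)$ from Hardy's inequality; as you yourself note, recovering the constant $1$ stated in the proposition requires O'Neil's sharper pointwise inequality $(fg)^{\ast\ast}(t)\le f^{\ast\ast}(t)g^{\ast\ast}(t)+\int_t^{\wq}f^{\ast}(s)g^{\ast}(s)\,\D s/s$ rather than the cruder bound in your Step~1. For every application in the present paper the precise value of the constant is irrelevant, so your version is entirely adequate.
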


The first order \emph{Lorentz-Sobolev space} $W^{1,p,q}(\Om)$ for $1<p<\wq,1\le q\le\wq$
 consists of functions $f\in L^{p,q}(\Om)$  with weak gradient $\na f\in L^{p,q}(\Om)$.
  A natural norm for a Lorentz-Sobolev function $f\in W^{1,p,q}(\Om)$ is defined  by
\[
\|f\|_{W^{1,p,q}(\Om)}=\left(\|f\|_{L^{p,q}(\Om)}^{p}+\|\na f\|_{L^{p,q}(\Om)}^{p}\right)^{1/p}.
\]
Higher order Lorentz-Sobolev spaces can be defined analogously.
\begin{proposition}(\cite{Tartar-1998})\label{prop:Lorentz-Sobolev embedding}  Let $1<p<\wq$ and $1\le q\le \wq$.  Then
	
	\begin{itemize}
		\item[(1).] $W^{1,p}(\Om)=W^{1,p,p}(\Om)$;
		\item[(2).] If $\Omega$ is bounded and smooth, then $W^{1,p,q}(\Om)$ embeds continuously into $L^{p^{*},q}(\Om)$
		for $1<p<n$, where $1/p^{*}=1/p-1/n$.
         \item[(3).]  $W^{1,n,1}(\Om)\subset C(\Om)$.
	\end{itemize}
\end{proposition}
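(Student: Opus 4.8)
The plan is to derive the three embeddings from two ingredients: the elementary identification $L^{p,p}=L^{p}$ with comparable norms, and the mapping properties of the Riesz potential $I_{1}g(x)=c_{n}\int_{\R^{n}}|x-y|^{1-n}g(y)\,\D y$ on the Lorentz scale, transferred from $\R^{n}$ to the bounded smooth domain $\Om$ by an extension operator. Part (1) is the easiest: since $f^{\ast}$ is nonincreasing one has $f^{\ast}\le f^{\ast\ast}$ pointwise, while Hardy's inequality gives $\|f^{\ast\ast}\|_{L^{p}(0,\wq)}\le\frac{p}{p-1}\|f^{\ast}\|_{L^{p}(0,\wq)}$ for $1<p<\wq$; hence $\|\cdot\|_{L^{p,p}(\Om)}$ and $\|\cdot\|_{L^{p}(\Om)}=\|f^{\ast}\|_{L^{p}(0,\wq)}$ are equivalent, and applying this to $f$ and to $\na f$ yields $W^{1,p}(\Om)=W^{1,p,p}(\Om)$.

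For (2) I would proceed by real interpolation, which handles all $q\in[1,\wq]$ at once. Fix $1<p_{0}<p<p_{1}<n$ with $1/p=(1-\ta)/p_{0}+\ta/p_{1}$; then also $1/p^{*}=(1-\ta)/p_{0}^{*}+\ta/p_{1}^{*}$. The classical Sobolev embeddings $W^{1,p_{i}}(\Om)\hookrightarrow L^{p_{i}^{*}}(\Om)$, $i=0,1$, interpolate — using $(L^{p_{0}^{*}},L^{p_{1}^{*}})_{\ta,q}=L^{p^{*},q}$, and, since $f\mapsto(f,\na f)$ realizes $W^{1,p}$ as a retract of $L^{p}\times(L^{p})^{n}$, also $(W^{1,p_{0}},W^{1,p_{1}})_{\ta,q}=W^{1,p,q}$ — to the desired $W^{1,p,q}(\Om)\hookrightarrow L^{p^{*},q}(\Om)$. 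Alternatively, and more directly, on $\R^{n}$ the representation $f(x)=c_{n}\int_{\R^{n}}\frac{(x-y)\cdot\na f(y)}{|x-y|^{n}}\,\D y$ gives $|f|\le c_{n}I_{1}(|\na f|)$, and $I_{1}\colon L^{p,q}(\R^{n})\to L^{p^{*},q}(\R^{n})$ is bounded (the Lorentz-refined Hardy--Littlewood--Sobolev inequality, from O'Neil's convolution inequality with the weak-$L^{n/(n-1)}$ kernel $|x|^{1-n}$); one then extends from $C_{c}^{\infty}$ to $W^{1,p,q}(\R^{n})$ by density and passes to $\Om$ through a Stein-type extension operator, which is bounded on the Lorentz--Sobolev scale since it is assembled from cut-offs, reflections and a fixed singular-integral correction.

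Part (3) is the borderline case and carries the main work. The decisive estimate is $I_{1}\colon L^{n,1}(\R^{n})\to L^{\wq}(\R^{n})$: for fixed $x$ the kernel $y\mapsto|x-y|^{1-n}$ belongs to $L^{n/(n-1),\wq}(\R^{n})$ with norm independent of $x$, so Hölder's inequality in Lorentz spaces (Proposition~\ref{prop: Lorentz-Holder inequality}, with $1=\frac{n-1}{n}+\frac{1}{n}$ and $1=\frac{1}{\wq}+\frac{1}{1}$) gives $|I_{1}g(x)|\le C\|g\|_{L^{n,1}(\R^{n})}$. Combined with $|f|\le c_{n}I_{1}(|\na f|)$ for $f\in C_{c}^{\infty}(\R^{n})$, this yields $\|f\|_{L^{\wq}}\le C\|\na f\|_{L^{n,1}}$, hence (after multiplying by a cut-off, or extending) $\|f\|_{L^{\wq}(\Om')}\lesssim\|f\|_{W^{1,n,1}(\Om)}$ for $\Om'\ssub\Om$. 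To pass from this bound to continuity I would use that $q=1<\wq$, so $C^{\infty}$ functions are dense in $W^{1,n,1}$ locally; a sequence $f_{k}\in C^{\infty}$ with $f_{k}\to f$ in $W^{1,n,1}(\Om)$ then converges locally uniformly by the $L^{\wq}$ estimate, so $f$ agrees a.e. with a continuous function, i.e. $f\in C(\Om)$.

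The genuinely nontrivial points, and the place I expect resistance, are the Lorentz mapping properties of $I_{1}$ — especially the sharp endpoint $L^{n,1}\to L^{\wq}$ underlying (3), which really requires the finer Lorentz norm (the same statement with $L^{n}$ in place of $L^{n,1}$ is false) — together with the two soft facts that the extension operator and the density of smooth functions remain valid in the Lorentz--Sobolev scale; everything else is bookkeeping. Of course, in the present paper one simply cites Tartar \cite{Tartar-1998}, but the above is how a self-contained proof would be organized.
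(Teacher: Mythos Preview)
Your proposal is correct, and as you yourself note in the last line, the paper does not prove this proposition at all: it is stated with a bare citation to Tartar \cite{Tartar-1998} and used as a black box. So there is no ``paper's own proof'' to compare against; your sketch is a perfectly standard self-contained argument along the lines one finds in Tartar's paper or in the folklore (O'Neil for the Lorentz-refined Hardy--Littlewood--Sobolev, the $L^{n,1}\to L^\infty$ endpoint via Lorentz H\"older for (3), density for $q<\infty$ to upgrade boundedness to continuity).
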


We shall need the space $L\log L$ as well. Recall that $L\log L(\Omega)$ consists of all functions $f\colon \Omega\to \R$ such that
\[\|f\|_{L\log L(\Omega)}:=\int_{0}^\infty f^\ast(t)\log\Big(2+\frac{1}{t}\Big)dt<\infty.\]
The following elementary fact on functions in $L\log L$ can be found in \cite[Lemma 2.1]{Sharp-Topping-2013-TAMS}.

\begin{lemma}\label{lemma:ST 2.1}
Suppose $f\in L\log L(B_r)$ and $r\in (0,\frac{1}{2})$. Then there exists a constant $C>0$, independent of $r$, such that
\[\|f\|_{L^1(B_r)}\leq C\Big[\log(\frac{1}{r}) \Big]^{-1}\|f\|_{L\log L(B_r)}.\]
\end{lemma}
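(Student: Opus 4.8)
\textbf{Proof plan for Lemma \ref{lemma:ST 2.1}.}
The plan is to estimate $\|f\|_{L^1(B_r)}$ directly from the rearrangement formula $\|f\|_{L^1(B_r)}=\int_0^{|B_r|}f^\ast(t)\,\D t$, splitting the integral according to where the weight $\log(2+1/t)$ is large. Since $r\in(0,\tfrac12)$, the measure $|B_r|=\om_4 r^4$ is a small number, and on the interval $(0,|B_r|)$ the weight $\log(2+1/t)$ is bounded below by something comparable to $\log(1/r)$ once $t$ is not too much smaller than $|B_r|$; more precisely, for $t\le |B_r|$ one has $1/t\ge 1/(\om_4 r^4)$, so $\log(2+1/t)\gtrsim \log(1/r)$. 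This is the heart of the matter: the nonincreasing rearrangement $f^\ast$ lives on $(0,|B_r|)$, and on that whole interval the multiplicative weight in the $L\log L$ norm is already as large as $\log(1/r)$ up to a universal constant.

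Concretely, first I would note that $f^\ast$ is supported (as far as the $L^1(B_r)$ integral is concerned) on $(0,|B_r|)$, and write
\[
\|f\|_{L^1(B_r)}=\int_0^{|B_r|} f^\ast(t)\,\D t
=\int_0^{|B_r|} f^\ast(t)\,\frac{\log(2+\tfrac1t)}{\log(2+\tfrac1t)}\,\D t
\le \Big(\inf_{0<t<|B_r|}\log(2+\tfrac1t)\Big)^{-1}\int_0^{|B_r|} f^\ast(t)\log(2+\tfrac1t)\,\D t .
\]
The last integral is bounded by $\|f\|_{L\log L(B_r)}$ by definition (extending the integrand by the same formula on $(|B_r|,\infty)$, where $f^\ast$ vanishes). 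It remains to bound the infimum from below: the function $t\mapsto\log(2+1/t)$ is decreasing, so its infimum over $(0,|B_r|)$ is attained in the limit $t\to|B_r|^-$ and equals $\log(2+1/|B_r|)=\log\big(2+(\om_4 r^4)^{-1}\big)$. For $r\in(0,\tfrac12)$ this is $\ge c\log(1/r)$ for a universal constant $c>0$ (indeed $2+(\om_4 r^4)^{-1}\ge (\om_4 r^4)^{-1}\ge r^{-4}/\om_4$, hence $\log(2+(\om_4 r^4)^{-1})\ge 4\log(1/r)-\log\om_4\ge c\log(1/r)$ once $r$ is bounded away from $1$, using that $\log(1/r)$ is bounded below by a positive constant on $(0,\tfrac12]$). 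Putting these together yields $\|f\|_{L^1(B_r)}\le C[\log(1/r)]^{-1}\|f\|_{L\log L(B_r)}$ with $C$ independent of $r$.

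The only point requiring a little care — and the closest thing to an obstacle — is the elementary inequality $\log\big(2+(\om_4 r^4)^{-1}\big)\ge c\log(1/r)$ uniformly for $r\in(0,\tfrac12)$: one must check the additive constant $\log\om_4$ does not spoil the bound, which is handled by the fact that $\log(1/r)\ge\log 2>0$ on that range, so any fixed additive loss is absorbed into the constant $c$. Everything else is a direct application of the layer-cake/rearrangement identity for the $L^1$ norm and the definition of the $L\log L$ norm, together with monotonicity of $t\mapsto\log(2+1/t)$. (Alternatively, one could simply cite \cite[Lemma 2.1]{Sharp-Topping-2013-TAMS} verbatim, since the statement there is for balls in $\R^2$ but the proof is dimension-free; I would include the short argument above for completeness.)
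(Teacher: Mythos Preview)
Your proof is correct. The paper itself does not give a proof of this lemma; it simply cites \cite[Lemma~2.1]{Sharp-Topping-2013-TAMS} as the source of this elementary fact. Your direct argument via the rearrangement identity $\|f\|_{L^1(B_r)}=\int_0^{|B_r|}f^\ast(t)\,\D t$ and the monotonicity of $t\mapsto\log(2+1/t)$ is exactly the standard proof one expects (and presumably the one in \cite{Sharp-Topping-2013-TAMS}); your handling of the additive constant $\log\om_4$ using the lower bound $\log(1/r)\ge\log 2$ on $(0,\tfrac12)$ is also fine.
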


%As a consequence of Proposition \ref{prop: Lorentz-Holder inequality}, we infer that if $f\in LM^{p_{1},q_{1},s}(\Om)$
%and $g\in LM^{p_{2},q_{2},s}(\Om)$, then $fg\in{\rm LM}^{p,q,s}(\Om)$,
%and
%\begin{equation}
%\|fg\|_{LM^{p,q,s}(\Om)}\le\|f\|_{LM^{p_{1},q_{1},s}(\Om)}\|g\|_{LM^{p_{2},q_{2},s}(\Om)}..\label{eq: Multiplicity rule}
%\end{equation}
%Indeed, let $x\in\Om$ and $r>0$. Then,
%\[
%\|fg\|_{L^{p,q}(\Om\cap B_{r}(x))}\le\|f\|_{L^{p_{1},q_{1}}(\Om\cap B_{r}(x))}\|g\|_{L^{p_{2},q_{2}}(\Om\cap B_{r}(x))}.
%\]
%Thus,
%\[
%r^{-s/p}\|fg\|_{L^{p,q}(\Om\cap B_{r}(x))}\le\left(r^{-s/p_{1}}\|f\|_{L^{p_{1},q_{1}}(\Om\cap B_{r}(x))}\right)\left(r^{-s/p_{2}}\|g\|_{L^{p_{2},q_{2}}(\Om\cap B_{r}(x))}\right).
%\]
%Taking supremum over $x$ and $r$ gives (\ref{eq: Multiplicity rule}).
%
%A special case of (\ref{eq: Multiplicity rule}) will be repeatedly applied in our later proofs:
%$p_{1}=p_{2}=4$ and $q_{1}=2,q_{2}=\wq$, then $p=q=2$. More precisely, we shall apply
%%\[
%%LM^{4,2,s}\cdot LM^{4,\wq,s}\subset LM^{2,2,s}=M^{2,s}.
%%\]
%\begin{equation}\label{eq:easy holder LM weak}
%\|fg\|_{M^{2,s}}\leq \|f\|_{LM^{4,2,s}}\|g\|_{LM^{4,\infty,s}}=\|f\|_{LM^{4,2,s}}\|g\|_{M_*^{4,s}}.
%\end{equation}

\subsection{Fractional Riesz operators}

Let $0<\al<n$ and  $I_\alpha=c_{n,\al}|x|^{\alpha-n}$, $x\in \R^n$, be the usual fractional Riesz operators, where $c_{n,\al}$ is a positive normalization constant. The following well-known estimates on fractional Riesz operators in Morrey spaces were proved by Adams \cite{Adams-1975}.

\begin{proposition}\label{prop:Adams 1975} Let $0< \alpha<n$, $0\le \lambda< n$ and $1\le p<\frac{n-\lambda}{\alpha}$. Then, there exists a constant $C>0$ depending only $n,\al,\la$ and $p$ such that, for all $f\in M^{p,\lambda}(\R^n)$, there holds

	(i) If  $p>1$, then
 \begin{equation}\label{eq:Riesz Adams 1}
		\|I_\alpha(f)\|_{M^{\frac{(n-\la)p}{n-\la-\al p},\lambda}(\R^n)}\leq C\|f\|_{M^{p,\lambda}(\R^n)}.
		\end{equation}
					
		(ii) If $p=1$, then
		\begin{equation}\label{eq:Riesz Adams 2}
		\|I_\alpha(f)\|_{M_{*}^{\frac{n-\lambda}{n-\lambda-\alpha},\lambda}(\R^n)}\leq C\|f\|_{M^{1,\lambda}(\R^n)}.
		\end{equation}	
\end{proposition}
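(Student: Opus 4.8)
\textbf{Proof proposal for Proposition \ref{prop:Adams 1975}.}

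The plan is to follow the now-classical argument of Adams, which reduces the Morrey bound for the Riesz potential $I_\alpha f$ to a pointwise estimate interpolating between a local piece controlled by the Morrey norm and a tail piece controlled by a maximal-type quantity. First I would fix $x\in\R^n$ and split the integral defining $I_\alpha(f)(x)=c_{n,\alpha}\int_{\R^n}|x-y|^{\alpha-n}f(y)\,\D y$ into the ball $B_\delta(x)$ and its complement, for a radius $\delta>0$ to be optimized at the end. For the near piece, I would decompose $B_\delta(x)$ into dyadic annuli $B_{2^{-j}\delta}(x)\setminus B_{2^{-j-1}\delta}(x)$, $j\ge 0$, estimate $|x-y|^{\alpha-n}\lesssim (2^{-j}\delta)^{\alpha-n}$ on each, and apply H\"older's inequality with exponent $p$ on each annulus, using $\int_{B_{2^{-j}\delta}(x)}|f|^p\le \|f\|_{M^{p,\lambda}}^p (2^{-j}\delta)^\lambda$. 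Summing the resulting geometric series in $j$ (which converges precisely because $p<\frac{n-\lambda}{\alpha}$, i.e.\ the exponent $\alpha-\frac{n-\lambda}{p}$ is negative) yields
\[
\left|\int_{B_\delta(x)}|x-y|^{\alpha-n}f(y)\,\D y\right|\lesssim \delta^{\,\alpha-\frac{n-\lambda}{p}}\,\|f\|_{M^{p,\lambda}(\R^n)}.
\]

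For the far piece, on $\R^n\setminus B_\delta(x)$ I would again decompose into dyadic annuli $B_{2^{j+1}\delta}(x)\setminus B_{2^{j}\delta}(x)$, $j\ge 0$, bound $|x-y|^{\alpha-n}\lesssim (2^j\delta)^{\alpha-n}$, and apply H\"older (when $p>1$) or estimate directly (when $p=1$) together with the Morrey bound on $\int_{B_{2^{j+1}\delta}(x)}|f|^p$, obtaining a geometric series in $j$ that converges because $\alpha-\frac{n-\lambda}{p}<0$; this gives
\[
\left|\int_{\R^n\setminus B_\delta(x)}|x-y|^{\alpha-n}f(y)\,\D y\right|\lesssim \delta^{\,\alpha-\frac{n-\lambda}{p}}\,\|f\|_{M^{p,\lambda}(\R^n)}
\]
as well — so in fact for part (i) the naive splitting already gives a pointwise bound, but this is too crude: it does not exhibit the gain of integrability. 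The correct move, and the technical heart of the proof, is to keep the far piece controlled not by the full Morrey norm but by a \emph{fractional maximal function} $M_{\alpha p}f(x)^{1/p}$-type quantity (equivalently, to optimize more carefully), producing the sharper pointwise inequality
\[
|I_\alpha f(x)|\;\lesssim\;\delta^{\alpha}\,\big(M f(x)\big)\;+\;\delta^{\,\alpha-\frac{n-\lambda}{p}}\,\|f\|_{M^{p,\lambda}(\R^n)}
\]
(with $Mf$ the Hardy--Littlewood maximal function), valid for every $\delta>0$. Optimizing in $\delta$ — choosing $\delta$ so the two terms balance — yields
\[
|I_\alpha f(x)|\;\lesssim\;\big(Mf(x)\big)^{1-\frac{\alpha p}{n-\lambda}}\,\|f\|_{M^{p,\lambda}(\R^n)}^{\frac{\alpha p}{n-\lambda}}.
\]
This is the key pointwise estimate; I expect balancing the split and getting the exponents exactly right to be the main obstacle, since it is where the precise arithmetic relation $\frac{(n-\lambda)p}{n-\lambda-\alpha p}$ (resp.\ $\frac{n-\lambda}{n-\lambda-\alpha}$) is forced.

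With the pointwise inequality in hand, part (i) follows by raising to the power $q:=\frac{(n-\lambda)p}{n-\lambda-\alpha p}$ and integrating over an arbitrary ball $B_r(z)$: since $q\big(1-\frac{\alpha p}{n-\lambda}\big)=p$, one gets $\int_{B_r(z)}|I_\alpha f|^q\lesssim \|f\|_{M^{p,\lambda}}^{q-p}\int_{B_r(z)}(Mf)^p$, and then the strong $(p,p)$ boundedness of the Hardy--Littlewood maximal operator (valid since $p>1$) combined with its well-known mapping property on Morrey spaces, $\|Mf\|_{M^{p,\lambda}}\lesssim\|f\|_{M^{p,\lambda}}$, gives $\int_{B_r(z)}(Mf)^p\lesssim r^\lambda\|f\|_{M^{p,\lambda}}^p$; multiplying out the exponents yields exactly \eqref{eq:Riesz Adams 1}. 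For part (ii), where $p=1$, the maximal operator is only of weak type $(1,1)$, so the same computation gives a bound on the distribution function of $I_\alpha f$ on each ball rather than an $L^q$ bound: from $|I_\alpha f(x)|\lesssim (Mf(x))^{1-\frac{\alpha}{n-\lambda}}\|f\|_{M^{1,\lambda}}^{\frac{\alpha}{n-\lambda}}$ one estimates $|\{x\in B_r(z):|I_\alpha f(x)|>t\}|$ by the super-level set of $Mf$, applies weak-$(1,1)$ together with the Morrey bound $\int_{B_{2r}(z)}|f|\lesssim r^\lambda\|f\|_{M^{1,\lambda}}$ (after an additional localization of $f$, splitting $f=f\chi_{B_{2r}(z)}+f\chi_{B_{2r}(z)^c}$ and handling the second term by the far-piece estimate above which is already $L^\infty$ on $B_r(z)$), and reads off the weak Morrey bound \eqref{eq:Riesz Adams 2}. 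Throughout, the only structural inputs are H\"older's inequality, dyadic decomposition, and the boundedness properties of the Hardy--Littlewood maximal function on $L^p$ and on Morrey spaces; no PDE input is needed.
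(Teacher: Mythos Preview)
The paper does not give its own proof of this proposition: it is stated as a known preliminary and attributed directly to Adams \cite{Adams-1975}. Your proposal is essentially Adams's original argument (Hedberg-type pointwise inequality followed by the Morrey boundedness of the Hardy--Littlewood maximal operator due to Chiarenza--Frasca), and the final pointwise estimate
\[
|I_\alpha f(x)|\lesssim \big(Mf(x)\big)^{1-\frac{\alpha p}{n-\lambda}}\,\|f\|_{M^{p,\lambda}}^{\frac{\alpha p}{n-\lambda}}
\]
together with the deductions of (i) and (ii) from it are correct.

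There is, however, a slip in your near/far discussion that you should fix. The dyadic sum you write for the \emph{near} piece, using only the Morrey bound, does \emph{not} converge: the exponent $\alpha-\tfrac{n-\lambda}{p}$ is negative, and since the radii $2^{-j}\delta$ shrink to $0$, the terms $(2^{-j}\delta)^{\alpha-(n-\lambda)/p}$ blow up. So your first displayed bound for $\int_{B_\delta(x)}$ is false, and the remark that ``the naive splitting already gives a pointwise bound'' is incorrect. The roles are reversed from what your prose says: it is the \emph{near} piece that must be controlled by the Hardy--Littlewood maximal function (giving $\delta^\alpha Mf(x)$, Hedberg's inequality), while the \emph{far} piece is the one legitimately bounded by $\delta^{\alpha-(n-\lambda)/p}\|f\|_{M^{p,\lambda}}$ via the Morrey condition; your displayed two-term inequality already reflects this correct assignment even though the surrounding text swaps them. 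Once that is straightened out, the optimization in $\delta$ and the rest of the argument go through exactly as you outlined.
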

Note that when $\la=0$ we recover  the usual theory of  Riesz potentials between $L^p$ spaces.

%\textbf{(The following fractional maximal operator seems unnecessary)} Define for any $0\le\be\le n$, \[ M_{\be}f(x)=\sup_{r>0}r^{\be-n}\int_{B_{r}(x)}|f|.\]
%Note that $M_{0}f=Mf$ and $M_{n}f=\|f\|_{L^{1}}$. In case $f\in L_{\loc}^{1}(\Om)$,
%we may define \[ M_{\be,\Om}f(x)=\sup_{r>0}r^{\be-n}\int_{B_{r}(x)\cap\Om}|f|. \]Then $f\in M^{1,n-\be}(\Om)$ if and only if $M_{\be,\Om}f\in L^{\wq}(\Om)$.

%\begin{lemma}(Adams' inequality) Let $0<\al<\be\le n$. Then, for
%	any $f\in L_{\loc}^{1}(\R^{n})$, there holds
%	\[
%	M_{\al}f(x)\le I_{\al}|f|(x)\le C_{n,\al,\be}(M_{\be}f(x))^{\al/\be}(Mf(x))^{1-\al/\be}.
%	\]
%\end{lemma}

%This implies \textbf{the improve Riesz potential theory} as that of
%Lemma A.3 of Sharp-Topping.

The following lemma can be viewed as an improved version of the classical Riesz potential theory. It plays a key role in the proofs of Sharp-Topping \cite{Sharp-Topping-2013-TAMS}, and  also in our approach.

\begin{lemma}[\cite{Adams-1975}, Proposition 3.1]\label{lemma:improved Riesz potential}
	Let $0<\al<\be\le n$ and $f\in M^{1,n-\be}(\R^{n})\cap L^{p}(\R^{n})$
	for some $1<p<\wq$. Then, $I_{\al}f\in L^{\frac{\be p}{\be-\al}}(\R^{n})$
	with
	\[
	\|I_{\al}f\|_{\frac{p\be}{\be-\al}, \R^n}\le C_{\al,\be,n,p}\|f\|_{M^{1,n-\be}(\R^n)}^{\frac{\al}{\be}}\|f\|_{p, \R^n}^{\frac{\be-\al}{\be}}. %\le C\|f\|_{p}^{\frac{\be-\al}{\be}}.
	\]
	\end{lemma}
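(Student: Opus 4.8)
The statement to prove is Lemma \ref{lemma:improved Riesz potential} (Adams, Proposition 3.1): for $0<\al<\be\le n$ and $f\in M^{1,n-\be}(\R^n)\cap L^p(\R^n)$ with $1<p<\infty$, one has $I_\al f\in L^{p\be/(\be-\al)}(\R^n)$ with the interpolation-type bound $\|I_\al f\|_{p\be/(\be-\al)}\le C\|f\|_{M^{1,n-\be}}^{\al/\be}\|f\|_p^{(\be-\al)/\be}$.

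\textbf{Plan of proof.} The natural approach is a splitting of the Riesz kernel at a radius $\rho>0$ to be optimized pointwise. Write $I_\al f(x)=c_{n,\al}\int_{\R^n}|x-y|^{\al-n}f(y)\,dy = A_\rho(x)+B_\rho(x)$, where $A_\rho$ is the integral over $|x-y|\le\rho$ and $B_\rho$ the integral over $|x-y|>\rho$. The key estimates are: (i) a bound on the near part in terms of the Morrey norm, and (ii) a bound on the far part in terms of the $L^p$ norm. For (i), I would decompose the ball $\{|x-y|\le\rho\}$ into dyadic annuli $\{2^{-k-1}\rho<|x-y|\le 2^{-k}\rho\}$, $k\ge 0$; on each annulus $|x-y|^{\al-n}\lesssim (2^{-k}\rho)^{\al-n}$, and $\int_{B_{2^{-k}\rho}(x)}|f|\le (2^{-k}\rho)^{n-\be}\|f\|_{M^{1,n-\be}}$ by definition of the Morrey norm. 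Summing the geometric series $\sum_k (2^{-k}\rho)^{\al-n}(2^{-k}\rho)^{n-\be}=\rho^{\al-\be}\sum_k 2^{-k(\al-\be)}$, which converges since $\al-\be<0$, gives $|A_\rho(x)|\lesssim \rho^{\al-\be}\|f\|_{M^{1,n-\be}}$. For (ii), I would apply Hölder's inequality on the far region: $|B_\rho(x)|\le \|f\|_p\big(\int_{|x-y|>\rho}|x-y|^{(\al-n)p'}dy\big)^{1/p'}$, and the latter integral is finite and equals a constant times $\rho^{\al-n+n/p'}=\rho^{\al-n/p}$ precisely when $(\al-n)p'<-n$, i.e.\ $p<n/\al$ — wait, one must check this: here we only know $\be\le n$ and $\al<\be$, so $\al<n$, but $p$ could be large. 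Actually the correct handling when $p\ge n/\al$ is that $B_\rho$ need not even be finite via this crude bound; instead one should note that the \emph{target} exponent $q:=p\be/(\be-\al)$ satisfies $q>p$, and one proves the weak-type/strong-type bound directly. Let me restructure: the cleaner route is to prove a pointwise bound $|I_\al f(x)|\lesssim \|f\|_{M^{1,n-\be}}^{\al/\be}\,(M_p f(x))^{1-\al/\be}$ is too much; rather, combining $|A_\rho|\lesssim \rho^{\al-\be}\|f\|_{M^{1,n-\be}}$ with $|B_\rho|\lesssim \rho^{\al-n/p}\|f\|_p$ (valid when $\al<n/p$, which holds when $q=p\be/(\be-\al)$ — hmm) and optimizing over $\rho$ yields the pointwise estimate $|I_\al f(x)|\lesssim \|f\|_{M^{1,n-\be}}^{1-\theta}\|f\|_p^{\theta}$ for a suitable $\theta$; but a pointwise constant bound cannot give membership in $L^q$.

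So the right structure is the standard Hedberg-type argument yielding a \emph{weak-type} bound and then using the Morrey hypothesis to upgrade. Precisely: I would show $|I_\al f(x)|\lesssim \rho^{\al-\be}\|f\|_{M^{1,n-\be}} + \rho^{\al-n/p}\|f\|_p$ and also $|I_\al f(x)|\lesssim \rho^{\al-\be}\|f\|_{M^{1,n-\be}}+\rho^{\al}\,\mathcal M f(x)^{?}$... The genuinely clean version: replace $B_\rho$ estimate by $|B_\rho(x)|\lesssim \rho^{\al-\be}\big(\int_{|x-y|>\rho}|f(y)|\,|x-y|^{\be-n}dy\big)$ is circular. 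I will instead follow Adams directly: estimate $|A_\rho(x)|\lesssim \rho^{\al-\be}\|f\|_{M^{1,n-\be}}$ as above, and estimate $|B_\rho(x)|$ by Hölder to get $\lesssim \rho^{\al-n/p}\|f\|_p$ \emph{under the running assumption} $p<n/\al$ (which is implied — since $q=p\be/(\be-\al)<\infty$ forces nothing, but if $p\ge n/\al$ one first reduces: note $M^{1,n-\be}\cap L^p \subset M^{1,n-\be}\cap L^{p_0}$ for any $p_0<p$ is false, but $L^p\cap M^{1,n-\be}$ does give $L^{p_0}$ control locally — actually the clean reduction is that the statement for large $p$ follows by a separate elementary argument since then $I_\al:L^p\to L^\infty$-type fails too, so Adams really assumes $p<n/\al$ implicitly, or the annulus sum replaces the Hölder step). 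Given the intended use in this paper, $p\in(1,4/3)$ and $\al,\be$ are such that $p<n/\al$, so I will carry the Hölder estimate through.) Then optimize: choosing $\rho = \big(\|f\|_p/\|f\|_{M^{1,n-\be}}\big)^{\beta/(n/p-\be+\cdots)}$... concretely, balancing $\rho^{\al-\be}X$ against $\rho^{\al-n/p}Y$ with $X=\|f\|_{M^{1,n-\be}}$, $Y=\|f\|_p$ gives $\rho = (Y/X)^{1/(\be - n/p)}$ — sign check: need $\be>n/p$? No: the exponents are $\al-\be<0$ and $\al-n/p$; we need these of opposite behaviour in $\rho$, i.e.\ one increasing and one decreasing, so need $\al-\be<0<\al-n/p$, i.e.\ $n/p<\al<\be$, i.e.\ $p>n/\al$ — the opposite! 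I need to recheck against Adams; in any case the optimization is routine once the two one-sided bounds are in hand, and produces $|I_\al f(x)|\le C X^{1-\al/\be}(\text{pointwise } L^p\text{-average})^{\al/\be}$ leading, after an $L^q$ integration using $q(1-\al/\be)\cdot$(scaling) $=$ ... and Hardy–Littlewood maximal bounds, to the claim.

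\textbf{Honest restructure of the plan.} Rather than optimize to a pointwise bound (which as noted cannot yield an $L^q$ statement from a single $\rho$), the correct Adams argument is: for each $\rho$,
\[
|I_\al f(x)|\le C\Big(\rho^{\al-\be}\|f\|_{M^{1,n-\be}} + \int_{|x-y|>\rho}|x-y|^{\al-n}|f(y)|\,dy\Big),
\]
and now estimate the far integral \emph{not} crudely but by $\rho^{\al-\be}$ times a tail that one controls after choosing $\rho$ depending on $x$ so that the first term dominates a fixed multiple of a truncated maximal function; integrating the resulting distributional inequality $|\{|I_\al f|>t\}|\lesssim t^{-q}\|f\|_{M^{1,n-\be}}^{q-p}\|f\|_p^p$ (weak type) and then noting that the same bound holds with $q$ replaced by any exponent in a neighbourhood, one interpolates to the strong $L^q$ bound. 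I expect the \textbf{main obstacle} to be getting the bookkeeping of exponents exactly right in the optimization/interpolation step — in particular verifying that the weak-type estimate $|\{x:|I_\al f(x)|>t\}|\le C t^{-q}\|f\|_{M^{1,n-\be}}^{q-p}\|f\|_p^{p}$ with $q=p\be/(\be-\al)$ holds with the stated homogeneity (the powers $q-p$ and $p$ are forced by scaling, which is a useful consistency check: under $f\mapsto f(\lambda\cdot)$ both sides scale correctly), and then passing from weak-type to strong-type. Since Lemma \ref{lemma:improved Riesz potential} is quoted verbatim from Adams \cite{Adams-1975}, in the paper itself I would simply cite it; the sketch above is how one reconstructs the proof: dyadic-annulus Morrey estimate for the near part, Hölder (or a second maximal-function) bound for the far part, optimize $\rho=\rho(x)$, obtain the weak-$(p,q)$ bound, and upgrade by interpolation.
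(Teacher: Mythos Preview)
The paper does not give its own proof of this lemma; it simply cites Adams \cite{Adams-1975}, Proposition 3.1. So there is nothing in the paper to compare your argument against. That said, your attempted reconstruction contains a genuine error that is worth flagging, since it is the reason you end up going in circles.

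You allocate the Morrey hypothesis to the \emph{near} part $A_\rho$ and try to use $L^p$/H\"older on the \emph{far} part $B_\rho$. This is backwards, and your near-part computation actually diverges: you write that $\sum_{k\ge 0} 2^{-k(\al-\be)}$ ``converges since $\al-\be<0$'', but $\al-\be<0$ means the general term is $2^{k(\be-\al)}\to\infty$. The point is that the Morrey condition $\int_{B_r}|f|\le r^{n-\be}\|f\|_{M^{1,n-\be}}$ is exactly what makes the \emph{far} dyadic sum converge: splitting $\{|x-y|>\rho\}$ into annuli $\{2^k\rho<|x-y|\le 2^{k+1}\rho\}$, $k\ge 0$, gives
\[
|B_\rho(x)|\lesssim \sum_{k\ge 0}(2^k\rho)^{\al-n}(2^k\rho)^{n-\be}\|f\|_{M^{1,n-\be}}
=\rho^{\al-\be}\|f\|_{M^{1,n-\be}}\sum_{k\ge 0}2^{k(\al-\be)},
\]
which does converge. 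For the near part one uses the Hardy--Littlewood maximal function: the same dyadic decomposition of $\{|x-y|\le\rho\}$ yields $|A_\rho(x)|\lesssim \sum_{k\ge 0}(2^{-k}\rho)^{\al-n}\cdot (2^{-k}\rho)^n Mf(x)=C\rho^{\al}Mf(x)$. Now the optimization is straightforward and gives the \emph{pointwise} bound
\[
|I_\al f(x)|\le C\,\|f\|_{M^{1,n-\be}}^{\al/\be}\,\big(Mf(x)\big)^{(\be-\al)/\be},
\]
valid for every $x$ (this is the Hedberg-type inequality you were reaching for). Raising to the power $q=p\be/(\be-\al)$, integrating, and applying the maximal theorem in $L^p$ (here is where $p>1$ enters, and no restriction like $p<n/\al$ is needed) gives the stated strong-type estimate directly, with no weak-type step or interpolation required. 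Once the near/far roles are swapped, all of the exponent difficulties you encountered disappear.
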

%\begin{proof}
%	By Adams' inequality,
%	\[
%	|I_{\al}f(x)|\le C(M_{\be}f)^{\al/\be}(Mf(x))^{1-\al/\be}.
%	\]
%	The corollary is a direct consequence of this inequality.
%\end{proof}
In particular, in the case $1<p<n/\be$, we have
	\[
	\frac{\be p}{\be-\al}>\frac{np}{n-\al p},
	\]
	which implies that $I_{\al}f$ has better integrability than the typical one from $L^p$ boundedness. This improved Riesz potential estimate comes from the fact that $f$ has additional fine property, that is, $f$ also belongs to some
Morrey space. Lemma A.3 of \cite{Sharp-Topping-2013-TAMS} gives a local version of Lemma \ref{lemma:improved Riesz potential}.

%\textbf{Morrey space} is defined as follows:
%
%Let $\Om\subset\R^{n}$ be an open set with smooth boundary. Let $1\le p<\wq$
%and $0\le s<n$. The homogeneous Morrey space $M^{p,s}(\Om)$ consists
%of functions $f\in L^{p}(\Om)$ such that
%\[
%\|f\|_{M^{p,s}(\Om)}\equiv\left(\sup_{x\in\Om,r>0}r^{-s}\int_{B_{r}(x)\cap\Om}|f|^{p}\right)^{1/p}<\wq.
%\]
%$k$th order Morrey space $M_{k}^{p,n-kp}(\Om)$ consists of $f\in W^{k,p}(\Om)$
%such that $\na^{l}u\in M^{p,k-lp}(\Om)$. \textbf{Note that Struwe
%	used the notation $L_{k}^{p,m-kp}$ to denote $k$th order Morrey
%	space, but we used the symbol $M_{k}^{p,n-kp}$. The letter $L$ will
%	be mostly related to Lorentz spaces later. }
%
%\textbf{Lorentz-Morrey space.}
%
%Let $1\le p<\wq$ and $1\le q\le\wq$, $0<s<n$. Use $L^{p,q}(\Om)$
%to denote Lorentz spaces. The \emph{Lorentz-Morrey space} $LM^{p,q,s}(\Om)$
%consists of functions $u\in L^{p,q}(\Om)$ such that
%\[
%\|u\|_{LM^{p,q,s}(\Om)}^{p}\equiv\sup_{x\in\Om,r>0}r^{-s}\|u\|_{L^{p,q}(B_{r}(x)\cap\Om)}^{p}.
%\]
%Easy to see that $LM^{p,p,s}=M^{p,s}$, and since $L^{p,q_{1}}\subset L^{p,q_{2}}$
%if $q_{1}<q_{2}$, we have
%\[
%LM^{p,q_{1},s}\subset LM^{p,q_{2},s}.
%\]

\subsection{Scaling invariance of \eqref{eq:inhomogenous Lamm-Riviere system}}
We shall  use a scaling argument in our later proofs. Let $u$ be a weak solution of \eqref{eq:inhomogenous Lamm-Riviere system}. % and assume the smallness condition \eqref{eq:smallness assumption} is satisfied. Then there exist $A\in W^{2,2}\cap L^{\wq}(B_{8},M(m))$ and $B\in L^{2}(B_{8},M(m)\otimes\wedge^{2}\R^{4})$ such that \eqref{eq:condition for A and B} is satisfied. Consequently, we obtain the following conservation law for $u$:
% \begin{eqnarray}\label{eq: Conservation law inhomogeneous} \De(A\De u)={\rm div}K+Af &  & \text{in }B_{8}, \end{eqnarray}
%with $K$ given by \eqref{eq: K}. We shall frequently use this conservation law in our later proofs.
%\begin{equation}\label{eq: K}
%K=2\na A\De u-\De A\na u-Aw\na u-\na AV\cdot\na u+A\na(V\cdot\na u)+B\cdot\na u.
%\end{equation}
For any  $B_{2R}(x_{0})\subset B_{10}$ and $x\in B_{2}=B_{2}(0)$, set
\[
\begin{aligned} & u_{R}(x)=u(x_{0}+Rx),\\
& V_{R}(x)=RV(x_{0}+Rx),\quad w_{R}(x)=R^{2}w(x_{0}+Rx),\\
& \om_{R}(x)=R^{2}\om(x_{0}+Rx),\quad F_{R}(x)=R^{3}F(x_{0}+Rx),\quad f_{R}(x)=R^{4}f(x_{0}+Rx).
\end{aligned}
\]
It is straightforward to verify that $u_R$ satisfies
\begin{eqnarray}\label{eq: scaled equation}
\De^{2}u_{R}=\De(V_{R}\cdot\na u_{R})+{\rm div}(w_{R}\na u_{R})+(\na\om_{R}+F_R)\cdot\na u_{R}+f_{R} &  & \text{in }B_{2}.
\end{eqnarray}
Moreover, for any $0<r<1$ and $1\le q\le \wq$,  there holds
\[
\begin{aligned} & \|\na u_{R}\|_{L^{4,q}(B_{r}(0))}=\|\na u\|_{L^{4,q}(B_{rR}(x_{0}))},\quad\|\De u_{R}\|_{L^{2}(B_{r}(0))}=\|\De u\|_{L^{2}(B_{rR}(x_{0}))},\\
 & \|V_{R}\|_{L^{4,q}(B_{r}(0))}=\|V\|_{L^{4,q}(B_{rR}(x_{0}))},\quad\|\na V_{R}\|_{L^{2}(B_{r}(0))}=\|\na V\|_{L^{2}(B_{rR}(x_{0}))},\\
 & \|w_{R}\|_{L^{2}(B_{r}(0))}=\|w\|_{L^{2}(B_{rR}(x_{0}))},\\
 & \|\om_{R}\|_{L^{2}(B_{r}(0))}=\|\om\|_{L^{2}(B_{rR}(x_{0}))},\quad\|F_{R}\|_{L^{4/3,q}(B_{r}(0))}=\|F\|_{L^{4/3,q}(B_{rR}(x_{0}))},\\
 & \|f_{R}\|_{L^{p}(B_{r}(0))}=R^{4(1-1/p)}\|f\|_{L^{p}(B_{rR}(x_{0}))},
\end{aligned}
\]
and \[\|f(x_0 +R\cdot )\|_{L\log L(B_1)}\leq C\|f\|_{L\log L(B_R(x_0))}.\]
by \cite[Lemma 2.2]{Sharp-Topping-2013-TAMS} whenever $B_R(x_0)\subset B_{10}$.

\section{Warm up}\label{sec:reprove Sharp-Topping}

 Let $u\in W^{1,2}(B_{10},\R^m)$, $B_{10}\subset \R^2$, be a weak solution of system \eqref{eq:inhomogenuous system ST} and $f\in L^p(B_{10},\R^m)$.
Sharp and Topping \cite[Lemma 7.3]{Sharp-Topping-2013-TAMS} proved that if $p\in (1,2)$, then
\begin{eqnarray*}\label{eq: decay of S-T}
	\begin{aligned}
	\|\na u\|_{L^{2}(B_r)}\leq  Cr^{\alpha}\left(\|\nabla u\|_{L^{2}(B_{1})}+\|f\|_{L^p(B_1)}\right)
	\end{aligned}
	\end{eqnarray*}
for $0<r<1$ under a smallness assumption on $\Om$, where $\al=2(1-\frac{1}{p})$, from which the local $\al$-H\"older continuity follows. The method there is quite tricky and requires a very delicate control on coefficients of various inequalities throughout their  arguments.

The aim of this section is to reproduce the above decay estimate by refining the technique of \cite[Lemma 7.3]{Sharp-Topping-2013-TAMS}. The refined technique will be applied to the fourth order system \eqref{eq:inhomogenous Lamm-Riviere system} in the next section, but in a more complexed way.

\begin{theorem}\label{thm:Sharp-Topping}
	Let $u\in W^{1,2}(B_{10},\R^m)$ be a weak solution to \eqref{eq:inhomogenuous system ST}. Set $\alpha=2\big(1-\frac{1}{p} \big)$ if $1<p<2$ and $\alpha$ to be any number in $(0,1)$ if $p\geq 2$. Then there exist constants $\ep=\ep(p,m)>0$ and $C=C(p,m,\alpha)$, such that if $\|\Omega\|_{L^2(B_{10})}\leq \ep$, then
	\begin{eqnarray}\label{eq: decay via CL}
	\begin{aligned}
	\|\na u\|_{L^{2,1}(B_\ga)}\leq  C\ga^{\alpha}\left(\|\nabla u\|_{L^{2,1}(B_{1})}+\|f\|_{L^p(B_1)}\right).
	\end{aligned}
	\end{eqnarray}
	\end{theorem}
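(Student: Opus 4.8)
## Proof proposal

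The plan is to exploit the conservation law for the second-order Rivi\`ere system together with a Hodge decomposition and a scaling/iteration scheme, paralleling but streamlining \cite[Lemma 7.3]{Sharp-Topping-2013-TAMS}. Under the smallness assumption $\|\Omega\|_{L^2(B_{10})}\le\ep$ one obtains (as recalled in the discussion of Rivi\`ere's theorem) maps $A\in L^\infty\cap W^{1,2}$ and $B\in W^{1,2}$ with $\|A-\Id\|_{L^\infty}+\|\nabla A\|_{L^2}+\|\nabla B\|_{L^2}\lesssim\ep$, such that \eqref{eq:inhomogenuous system ST} is equivalent to the conservation law $\divergence(A\nabla u+B\nabla^\perp u)=Af$ on, say, $B_8$. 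First I would fix $\ga\in(0,\frac14)$, work on a ball $B_{2\ga}$, and Hodge-decompose $A\nabla u+B\nabla^\perp u=\nabla C+\nabla^\perp D$ on $B_{2\ga}$ with suitable boundary conditions, so that $\Delta C=Af$ and $\Delta D=\divergence(B\nabla u - A\nabla^\perp u)$ weakly (after the usual bookkeeping with $\nabla^\perp$). The point of doing this at scale $2\ga$ rather than at scale $1$ is the improvement of the $L\log L$/$L^{2,1}$ norms on small balls, which is what ultimately produces the decay factor.

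Next I would estimate the two pieces. For $C$: since $f\in L^p(B_1)$ with $p\in(1,2)$, standard Calder\'on--Zygmund theory on $B_{2\ga}$ gives $\nabla C\in W^{1,p}\hookrightarrow L^{2p/(2-p)}\hookrightarrow L^{2,1}$, and rescaling produces exactly the factor $\ga^{2(1-1/p)}=\ga^\alpha$ times $\|f\|_{L^p(B_1)}$ (when $p\ge2$ one instead lands in $L^\infty$-type estimates and can extract any $\ga^\alpha$ with $\alpha<1$). For $D$: one writes $\Delta D$ as a divergence of a product of an $L^2$ function (the small coefficients $A-\Id$, $B$) with $\nabla u\in L^{2,1}$; using Proposition \ref{prop: Lorentz-Holder inequality} the product lies in $L^{1,1}=L^1$ with a factor $\lesssim\ep\|\nabla u\|_{L^{2,1}(B_{2\ga})}$, so by the sharp $L^1\to L^{2,\infty}$, combined with a duality/Wente-type improvement (the structure $\divergence(B\nabla^\perp u)$ is a Jacobian-type term), one gains $\nabla D\in L^{2,1}(B_\ga)$ with small constant $\lesssim\ep\|\nabla u\|_{L^{2,1}(B_{2\ga})}$. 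Finally, reabsorbing via $A=\Id+(A-\Id)$ one gets $\nabla u=\nabla C+\nabla^\perp D - (A-\Id)\nabla u - B\nabla^\perp u$ up to harmless terms, hence on $B_\ga$
\[
\|\nabla u\|_{L^{2,1}(B_\ga)}\le C\ep\,\|\nabla u\|_{L^{2,1}(B_{2\ga})}+C\ga^\alpha\|f\|_{L^p(B_1)},
\]
and more usefully, a version comparing scales $B_{\theta\ga}$ and $B_\ga$ of the form $\|\nabla u\|_{L^{2,1}(B_{\theta\ga})}\le C(\theta^\alpha+\ep)\|\nabla u\|_{L^{2,1}(B_\ga)}+\theta^\alpha\ga^\alpha\|f\|_{L^p(B_1)}$.

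Then I would run the standard iteration lemma: choosing $\theta$ small so that $C\theta^\alpha\le\frac12\theta^{\alpha'}$ for a slightly smaller exponent $\alpha'$, and then $\ep$ small depending on $\theta$ so that $C\ep\le\frac12\theta^{\alpha'}$, the inequality $a(\theta\ga)\le\theta^{\alpha'}a(\ga)+ (\text{source})$ iterates to $a(\ga)\lesssim\ga^{\alpha'}(a(1)+\|f\|_{L^p(B_1)})$, and since $\alpha'<\alpha$ was arbitrary one upgrades to the claimed exponent $\alpha$ by absorbing a $\ga^{-\delta}$ factor (here one uses $p<2$ so that $\alpha<1$ strictly; for $p\ge2$ the exponent was already chosen freely in $(0,1)$). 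I expect the main obstacle to be the $D$-term estimate: getting the constant in front of $\|\nabla u\|_{L^{2,1}(B_{2\ga})}$ to be genuinely small (proportional to $\ep$, not just bounded) requires the improved/compensated estimate for the div-curl structure $\divergence(A\nabla u+B\nabla^\perp u)$ — i.e.\ one cannot afford a crude $L^1\to L^{2,\infty}$ bound alone but needs the Lorentz-space refinement of Wente's inequality together with careful tracking that all coefficient norms are $O(\ep)$ on $B_{2\ga}$. Keeping the chain "smallness of $\Omega$ $\Rightarrow$ smallness of $A-\Id,B$ $\Rightarrow$ smallness of the reabsorbed term" uniform in the scale $\ga$ is the delicate bookkeeping that replaces the monotonicity argument used by Sharp--Topping.
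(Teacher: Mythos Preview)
Your overall architecture—conservation law, Hodge decomposition, then iterate—matches the paper's, but there is a genuine gap in how you handle the $B$ term, and it occurs in two places.

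You decompose $A\nabla u + B\nabla^\perp u$ and then need (i) an $L^{2,1}$ bound on $\nabla D$ where $\Delta D=\divergence\big(B\nabla u-(A-\Id)\nabla^\perp u\big)$, and (ii) to reabsorb $B\nabla^\perp u$ when passing back from $\nabla C+\nabla^\perp D$ to $\nabla u$. For (i), note that $\divergence\big((A-\Id)\nabla^\perp u\big)=\nabla A\cdot\nabla^\perp u$ is a Jacobian and Wente applies, but $\divergence(B\nabla u)=\nabla B\cdot\nabla u+B\Delta u$ has no Jacobian structure: $\nabla B\cdot\nabla u$ is a plain $L^2\cdot L^2$ product, which lands only in $L^1$ and yields $\nabla D\in L^{2,\infty}$, not $L^{2,1}$. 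Your parenthetical ``the structure $\divergence(B\nabla^\perp u)$ is a Jacobian-type term'' refers to the wrong quantity: that term sits in the \emph{divergence} of your vector field (and is already absorbed into $Af$ by the conservation law), whereas $\Delta D$ comes from the \emph{curl}, and $\text{curl}(B\nabla^\perp u)=\divergence(B\nabla u)$ is not a Jacobian. For (ii) the obstruction is even more direct: controlling $\|B\nabla^\perp u\|_{L^{2,1}}$ by $\ep\|\nabla u\|_{L^{2,1}}$ would require $B\in L^\infty$, but Rivi\`ere's construction only gives $B\in W^{1,2}$, which in dimension two misses $L^\infty$.

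The paper avoids this by decomposing $A\nabla u$ alone (after extending from $B_1$ to $\R^2$) and writing the conservation law as $-\divergence(A\nabla u)=\nabla^\perp B\cdot\nabla u+Af$. Then the divergence piece contains the Jacobian $\nabla^\perp B\cdot\nabla u\in\mathcal{H}^1$, giving $\nabla r_1\in L^{2,1}$ with factor $\ep$; the curl piece is $\nabla^\perp A\cdot\nabla u$, again a Jacobian; the $f$ piece yields the $\ga^\alpha$ decay via $W^{2,p}\hookrightarrow L^{2p/(2-p)}$ and H\"older; and a harmonic remainder $h$ supplies the linear decay $\|h\|_{L^{2,1}(B_\ga)}\lesssim\ga\|h\|_{L^{2,1}(B_1)}$. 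Recovering $\nabla u$ from $A\nabla u$ only needs $A^{-1}\in L^\infty$, which is available. In short: keep $B$ on the right-hand side so that it always appears through $\nabla^\perp B\cdot\nabla u$, never as an undifferentiated multiplier of $\nabla u$.
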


\begin{proof}
Choose $\ep$  so small that there exist $A\in W^{1,2}\cap L^\infty$ and $B\in W^{1,2}$ such that \eqref{eq:inhomogenuous system ST} can be written as
\begin{equation}\label{eq:inhomogenuous system ST via conservation law}
-\divergence(A\nabla u)=\nabla^\perp B\cdot \nabla u+Af.
\end{equation}

Next  extend all the  functions from $B_1$ to the whole space $\R^2$ in such a way that their norms in $\R^2$ are bounded  by a constant multiply of the corresponding norms in $B_1$.  With no confuse of notations, we use the same symbols for all the extended functions.

Applying the Hodge decomposition for $A\nabla u$, we obtain
$$A\D u=\D r+*\D g\qquad \text{in } \R^2.$$
Let $\Gamma(x)=-\frac{1}{2\pi}\log |x|$ be the fundamental solution of $-\Delta$ in $\R^2$. Set $r_1=\Gamma\ast \big(\nabla^\perp B\cdot \nabla u \big)$ and $r_2=\Gamma\ast (Af)$ in $\R^2$.
It follows that
\begin{eqnarray*}
  -\De (r-r_1-r_2)=0& \text{in } B_1.
\end{eqnarray*}
Thus we obtain
\begin{eqnarray*}
& A\D u=\D r_1+\D r_2+\ast \D g+ h & \text{in } B_1
\end{eqnarray*}
for some harmonic 1-form in $B_1$.

We estimate each term in the above decomposition as follows: First note that $\nabla^\perp B\cdot \nabla u$ belongs to the Hardy space $\mathcal{H}^1(\R^2)$. So
\[
\begin{aligned}
\|\nabla {r_1}\|_{L^{2,1}(\R^2)} & \lesssim \|\nabla\Gamma\ast \big(\nabla^\perp B\cdot \nabla u \big)\|_{L^{2,1}(\R^2)}\lesssim \|\nabla^\perp B\cdot \nabla u\|_{\mathcal{H}^1(\R^2)}\\
& \lesssim \|B\|_{L^2(\R^2)}\|\nabla u\|_{L^2(\R^2)}\lesssim \ep \|\nabla u\|_{L^2(B_1)},\\
\end{aligned}
\]
where in the second inequality above we used the fact that $\nabla\Gamma\colon \mathcal{H}^1(\R^2)\to L^{2,1}(\R^2)$ is bounded (see e.g.~\cite[Section A.4]{Sharp-Topping-2013-TAMS}). Similarly, we have
\[
\begin{aligned}
\|\nabla g\|_{L^{2,1}(\R^2)} \lesssim \|A\|_{L^2(\R^2)}\|\nabla u\|_{L^2(\R^2)}\lesssim \ep \|\nabla u\|_{L^2(B_1)}.
\end{aligned}
\]
Since $\nabla^2\Gamma$ is a singular operator,
$$\|\nabla^2 r_2\|_{L^{p}(\R^2)}\lesssim \|f\|_{L^p(B_1)}.$$
Set $\bar{p}=\frac{2p}{2-p}$ for $1<p<2$ and any finite number if $p\geq 2$. When $1<p<2$, applying the Lorentz-Sobolev embedding (see Proposition \ref{prop:Lorentz-Sobolev embedding}), we have
$$\|\nabla r_2\|_{L^{\bar{p},p}(\R^2)}\lesssim \|f\|_{L^p(B_1)}.$$
In this case,  H\"older's inequality gives
\[
\|\nabla r_2\|_{L^{2,1}(B_{\ga})}\lesssim \ga^{2(1-\frac{1}{p})}\|\nabla r_2\|_{L^{\bar{p},p}(B_{\ga})}\lesssim \ga^{2(1-\frac{1}{p})}\|f\|_{L^p(B_1)}
\]
for any $\ga\in(0,1)$. If $p\geq 2$, we may use H\"older's inequality and the Sobolev embedding for $p\geq 2$ to get
\[
\|\nabla v\|_{L^{2,1}(B_{\ga})}\lesssim \ga^{\alpha}\|f\|_{L^p(B_1)}
\]
for any $\ga\in(0,1)$.

Combining the above  estimates together, it follows that $\na u\in L^{2,1}(B_1).$
Moreover,  when $1<p<2$, we have
\[
\begin{aligned}\|\nabla u\|_{L^{2,1}(B_{\ga})} & \le\|h\|_{L^{2,1}(B_{\ga})}+\|\nabla {r_1}\|_{L^{2,1}(B_{\ga})}+\|\na g\|_{L^{2,1}(B_{\ga})}+\|\nabla r_2\|_{L^{2,1}(B_{\ga})}\\
& \lesssim \ga\|h\|_{L^{2,1}(B_{1})}+\|\nabla r_1\|_{L^{2,1}(B_{1})}+\|\na g\|_{L^{2,1}(B_{1})}+\ga^{2(1-\frac{1}{p})}\|\nabla r_2\|_{L^{\bar{p},p}(B_{1})}\\
& \lesssim \ga\|\na u\|_{L^{2}(B_{1})}+\ep_m\|\na u\|_{L^2(B_\ga)}+\ga^{2(1-\frac{1}{p})}\|f\|_{L^p(B_1)}\\
&\lesssim (\gamma+\ep_m)\|\nabla u\|_{L^{2,1}(B_1)}+\ga^{2(1-\frac{1}{p})}\|f\|_{L^p(B_1)},
\end{aligned}
\]
and similarly, when $p\geq 2$,
\begin{eqnarray}\label{eq: decay via CL p > 2}
\begin{aligned}
\|\na u\|_{L^{2,1}(B_\ga)} \lesssim (\gamma+\ep_m)\|\nabla u\|_{L^{2,1}(B_1)}+\ga^{\al}\|f\|_{L^p(B_1)}
\end{aligned}
\end{eqnarray}
for any $\al\in (0,1)$.

The last step is to iterate. Let $\ga>0$ to be determined and choose $\ga\geq \epsilon_m$ so that
\[
\|\na u\|_{L^{2,1}(B_{\ga})}\le C\left(\ga\|\na u\|_{L^{2,1}(B_{\ga})}+\ga^{\al}\|f\|_{L^{p}(B_{1})}\right).
\]
Note that the equation is scaling invariant: for any $\tau>0$,
the functions $u_{\tau}(x)=u(x_{0}+\tau x)$,  $f_{\tau}(x)=\tau^{2}f(x_{0}+\tau x)$ and
$\Om_{\tau}=\tau\Om(x_{0}+\tau x)$ satisfy
\[
-\De u_{\tau}=\Om_{\tau}\cdot\na u_{\tau}+f_{\tau}.
\]
Hence the sequence $\{a_n\}_{n\in \N}$, with $a_{n}=\|\na u\|_{L^{2,1}(B_{\ga^{n}})}$, satisfies
\[
a_{n}\le C\ga a_{n-1}+C\ga^{n\al}\|f\|_{L^{p}(B_{1})}.
\]
Iteration gives
\[
a_{n}\le(C\ga)^{n}a_{0}+\ga^{n\al}\left(\sum_{i=0}^{n-1}\left(C\ga^{1-\al}\right)^{i}\right)\|f\|_{L^{p}(B_{1})}.
\]
Choose $\ga$ such that $C\ga^{1-\al}<1$ and we achieve
\[
a_{n}\le C\ga^{n\al}(a_{0}+\|f\|_{L^{p}(B_{1})}).
\]
The proof is complete.
\end{proof}

%\begin{remark}\label{rmk:on the decay of Sharp-Topping in Lp scale}
	%It is clear from the above proof that, we may replace all the $L^{2,1}$-norms by the corresponding $L^2$-norms to arrive at the following decay estimate in $L^2$-scale,
%\begin{eqnarray}\label{eq: decay via CL in Lp}
%\begin{aligned} \|\na u\|_{L^{2}(B_\ga)}\leq  C\ga^{\alpha}\left(\|\nabla u\|_{L^{2}(B_{1})}+\|f\|_{L^p(B_1)}\right), \end{aligned}
%\end{eqnarray} where $\alpha=2\big(1-\frac{1}{p} \big)$ if $1<p<2$ and $\alpha$ can be any number in $(0,1)$ if $p\geq 2$.\end{remark}

\section{H\"older regularity via decay estimates}\label{sec:Decay estimate forth system}

In this section, we  prove Theorem \ref{thm:optimal Holder exponent for inho Lamm-Riviere}, which is a fourth order analog of Theorem \ref{thm:Sharp-Topping}.  The idea of the proof is quite similar to that used in Theorem \ref{thm:Sharp-Topping}, but more complicated.  %We will repeatedly use the smallness condition  \eqref{eq:smallness assumption}. That is, assume there exists some $\ep_m>0$ such that
 %\begin{equation}\label{eq:smallness assumption again}
%\|V\|_{W^{1,2}(B_{{10}})}+\|w\|_{L^{2}(B_{{10}})}
%+\|\om\|_{L^{2}(B_{10})}+\|F\|_{L^{4/3}(B_{10})}<\ep_{m}.
%\end{equation}

%\begin{theorem}\label{thm:optimal Decay estimates for Lamm-Riviere}
	%Let $u\in W^{2,2}(B_1,\R^m)$ be a weak solution to \eqref{eq:inhomogenous Lamm-Riviere system} and set $\alpha=4\big(1-\frac{1}{p} \big)$ if $1<p<4/3$ and $\alpha$ to be any number in $(0,1)$ if $p\geq \frac{4}{3}$. Then there exists $\ep>0$, depending only on $p$ and $m$, such that if the smallness condition
 %\eqref{eq:smallness assumption again} holds with $\ep_m=\ep$, then
 %   \begin{equation}\label{eq:Lamm-Riviere decay via CL}
  %  \begin{aligned}
  %  \|\na u\|_{L^{4,2}(B_\ga)}+\|\De u\|_{L^2(B_\ga)}\leq  C\ga^{\al}\left(\|\nabla u\|_{L^{4,2}(B_{1})}+\|\De u\|_{L^2(B_1)}+\|f\|_{L^p(B_1)}\right),
  %    \end{aligned}
   %  \end{equation}
	%where $C=C(p,m,\alpha)$ is a constant.
%\end{theorem}

%As an immediate consequence of Theorem \ref{thm:optimal Decay estimates for Lamm-Riviere}, we obtain the optimal H\"older exponent for weak solutions of \eqref{eq:inhomogenous Lamm-Riviere system}, which corresponds to the first assertion of Theorem \ref{thm:optimal Holder exponent for inho Lamm-Riviere}.

%\begin{corollary}\label{coro:optimal Holder continuity}
%	Under the assumption of Theorem \ref{thm:optimal Holder exponent for inho Lamm-Riviere}, we have $u$ is H\"older continuity with expenent $\alpha=4(1-\frac{1}{p})$.
%\end{corollary}

\begin{proof}[Proof of Theorem \ref{thm:optimal Holder exponent for inho Lamm-Riviere}]
We begin with the conservation law of Lamm and Rivi\'ere \cite{Lamm-Riviere-2008}. Take  $\ep_m>0$  sufficiently small such that  the  smallness condition  \eqref{eq:smallness assumption} holds. Then there exist $A\in W^{2,2}\cap L^{\wq}(B_{8},M(m))$ and $B\in L^{2}(B_{8},M(m)\otimes\wedge^{2}\R^{4})$ such that
\begin{eqnarray}\label{eq: Conservation law inhomogeneous again}
\De(A\De u)={\rm div}K+Af &  & \text{in }B_{8},
\end{eqnarray}
with $K$ given by \eqref{eq: K}.
  As in the previous section, we extend all the relevant functions from $B_1$ to $\R^4$ such that their norms in $\R^4$ are bounded  by a constant multiply of the corresponding norms in $B_1$.  To simplify the notation, we use the same symbols for all the extended functions. Below we first estimate the decay of $\|\De u\|_{L^2(B_r)}$, and then the decay of $\|\na u\|_{L^{4,2}(B_r)}$; finally we  combine the two decay estimates together to conclude the proof.

1. Decay of $\|\De u\|_{L^2(B_r)}$.

 By  an elementary computation using Proposition \ref{prop: Lorentz-Holder inequality} and \eqref{eq: A-B small}, we  obtain
\begin{equation}\label{eq: estimate of K}
 \|{K}\|_{L^{\frac{4}{3},1}(B_{1})}\lesssim \ep_m(\|\nabla u\|_{L^{4,2}(B_{1})}+\|\nabla^2 u\|_{L^2(B_{1})}).
\end{equation}
For instance, for the first term $\na A\De u$, we have
\[ \|\na A\De u\|_{L^{\frac{4}{3},1}(B_{1})}\le \|\na A\|_{L^{4,2}(B_{1})}\|\De u\|_{L^{2}(B_{1})}\lesssim \ep_m \|\na^2 u\|_{L^{2}(B_{1})}.\]
The rest terms are estimated similarly. For details, see e.g.  \cite[proof of Lemma 3.1]{Guo-Xiang-2019-Boundary}.

Next let $I_2$ be the fundamental solution of $-\De$ in $\R^4$ and set $u_1=I_2 \ast {\rm div}K$, $u_2=I_2\ast Af$ in $\R^4$. The theory of  singular integrals implies that
\[\|\na u_1\|_{L^{4/3,1}(\R^4)}\lesssim \|K\|_{L^{4/3,1}(\R^4)}\lesssim \|K\|_{L^{4/3,1}(B_1)}\]
and
\begin{equation}\label{eq: Lp theory of inhomo terms}
\|\na^2 u_2 \|_{L^p(\R^4)} \lesssim \|f \|_{L^p(\R^4)}\lesssim \|f \|_{L^p(B_1)}.
\end{equation}
Combining the embedding $W^{1,\frac 43, 1}(\R^4)\subset L^{2,1}(\R^4)$ and \eqref{eq: estimate of K}, we deduce
\begin{equation}\label{eq: estimate of u_1}
\begin{aligned}
\left\|u_{1}\right\|_{L^{2}\left(B_{1}\right)} & \lesssim\left\|u_{1}\right\|_{L^{2,1}\left(B_{1}\right)} \leqslant\left\|u_{1}\right\|_{L^{2,1}\left(\mathbb{R}^{4}\right)} \lesssim\left\|\nabla u_{1}\right\|_{L^{4/3,1}\left(\mathbb{R}^{4}\right)} \\
&  \lesssim\|K\|_{L^{\frac{4}{3}, 1}\left(B_{1}\right)}
\lesssim\ep_m(\|\nabla u\|_{L^{4,2}(B_{1})}+\|\nabla^2 u\|_{L^2(B_{1})}).
\end{aligned}
\end{equation}

Now it is easy to see that the function $v=A\De u-u_1-u_2$ is harmonic in $B_1$. Therefore, for any $0<\tau<1$,
\[\int_{B_{\tau}}|v|^{2} \leqslant C \tau^{4} \int_{B_{1}}|v|^{2}.\]
As a consequence,  for any $0<\tau<1$, it holds
\begin{equation}
\begin{aligned}\int_{B_{\tau}}|\Delta u|^{2} & \lesssim \int_{B_{\tau}}|v|^{2}+\int_{B_{\tau}}\left|u_{1}\right|^{2}
+\int_{B_{\tau}}|u_{2}|^{2}\\
 & \lesssim\tau^{4}\int_{B_{1}}|v|^{2}+\int_{B_{1}}\left|u_{1}\right|^{2}+\int_{B_{\tau}}|u_{2}|^{2}\\
 & \lesssim\tau^{4}\int_{B_{1}}|\Delta u|^{2}+\left(1+\tau^{4}\right)\int_{B_{1}}\left|u_{1}\right|^{2}+\int_{B_{\tau}}|u_{2}|^{2}+\tau^{4}\int_{B_{1}}|u_{2}|^{2}\\
 & \lesssim\left(\tau^{4}+\epsilon_{m}^{2}\right)\int_{B_{1}}|\na^{2}u|^{2}+\epsilon_{m}^{2}\|\nabla u\|_{L^{4,2}\left(B_{1}\right)}^{2}+\int_{B_{\tau}}|u_{2}|^{2}+\tau^{4}\int_{B_{1}}|u_{2}|^{2}\\
 & \lesssim\left(\tau^{4}+\epsilon_{m}^{2}\right)\int_{B_{2}}|\De u|^{2}+\epsilon_{m}^{2}\|\nabla u\|_{L^{4,2}\left(B_{2}\right)}^{2}+\int_{B_{\tau}}|u_{2}|^{2}
 +\tau^{4}\int_{B_{1}}|u_{2}|^{2}.
\end{aligned}
\label{eq: before decay estimate of Delta u}
\end{equation}
In the first line above we applied the fact that $|\De u|\approx |A\De u|$ holds since $|A-\Id|\le \ep_m$. In the last second line we applied the estimate  \eqref{eq: estimate of u_1} of $u_1$.
Continuing from the last line, we apply the interior $L^2$ estimate to derive
\begin{equation}\label{eq: interior L2 estimate}
\left\|\nabla^{2} u\right\|_{L^{2}\left(B_{1}\right)} \lesssim \|\Delta u\|_{L^{2}\left(B_{2}\right)}^{2}+\|\nabla u\|_{L^{4,2}\left(B_{2}\right)}^{2}.
\end{equation}
by assuming in a priori that $\int_{B_2}u=0$ so that $\|u\|_{L^{4}\left(B_{2}\right)}^{2}\lesssim \|\nabla u\|_{L^{4,2}\left(B_{2}\right)}^{2}$.

Finally, combining H\"older's inequality and \eqref{eq: Lp theory of inhomo terms} yields
\[
\int_{B_{\tau}}|u_{2}|^{2}\lesssim
\tau^{8(1-\frac{1}{p})}\left(\int_{B_{1}}|u_{2}|^{\frac{2p}{2-p}}\right)^{\frac{2-p}{p}}
\lesssim\tau^{8(1-\frac{1}{p})}\|f\|_{L^{p}(B_{1})}^{2},
\]
which together with  \eqref{eq: before decay estimate of Delta u}
leads to the decay estimate of $\De u$ for $\tau <1$:
\begin{equation}\label{eq: estimate of Delta u}
\begin{aligned}\int_{B_{\tau}}|\Delta u|^{2} & \lesssim\left(\tau^{4}+\epsilon_{m}^{2}\right)\int_{B_{2}}|\De u|^{2}+\epsilon_{m}^{2}\|\nabla u\|_{L^{4,2}\left(B_{2}\right)}^{2}+
(\tau^{4}+\tau^{8(1-\frac{1}{p})})\|u_{2}\|_{L^{\frac{2p}{2-p}}(B_{1})}^{2}\\
 & \lesssim\left(\tau^{4}+\epsilon_{m}^{2}\right)\int_{B_{2}}|\De u|^{2}+\epsilon_{m}^{2}\|\nabla u\|_{L^{4,2}\left(B_{2}\right)}^{2}
 +\tau^{8(1-\frac{1}{p})}\|u_{2}\|_{L^{\frac{2p}{2-p}}(B_{1})}^{2}\\
 & \lesssim\left(\tau^{4}+\epsilon_{m}^{2}\right)\int_{B_{2}}|\De u|^{2}+\epsilon_{m}^{2}\|\nabla u\|_{L^{4,2}\left(B_{2}\right)}^{2}+\tau^{8(1-\frac{1}{p})}\|f\|_{L^{p}(B_{1})}^{2}.
\end{aligned}
\end{equation}
We used in the second line the fact that $\tau^4\leq \tau^{8(1-\frac{1}{p})}$ since $p<2$ and $\tau<1$.

To continue, we have to estimate the decay of $\|\na u\|_{L^{4,2}(B_r)}$.

2. Decay of $\|\na u\|_{L^{4,2}(B_r)}$.

First use \eqref{eq: Conservation law inhomogeneous again} to rewrite our system as
$$\De \divergence(A\nabla u)=\divergence(\hat{K})+Af\qquad \text{in } B_8,$$
where $\hat{K}=K+\nabla^2A\cdot \nabla u+\nabla A\cdot \nabla^2 u\in L^{\frac{4}{3},1}(B_{10})$ satisfies the similar estimate:
\begin{equation}\label{eq: estimate of K2}
 \|\hat{K}\|_{L^{\frac{4}{3},1}(B_{R})}\lesssim \ep_m(\|\nabla u\|_{L^{4,2}(B_{R})}+\|\nabla^2 u\|_{L^2(B_{R})}).
\end{equation} for any $0<R\le 8$.

Keep in mind that we have extended all the related functions from $B_1$ into $\R^4$ with controlled norms.  By the Hodge decomposition, we have
$$A\D u=\D r+*\D g \qquad \text{in } \R^4$$
where
$$\De^2 r=\De \divergence(A\nabla u)=\divergence(\hat{K})+Af\quad \text{and}\quad \De g=*(\D A\wedge \D u).$$
Denote by $\Ga=c\log(\cdot)$ the fundamental solution of $\De^2$ in $\R^4$, and let $\tilde{r}=\Ga \ast \divergence(\hat{K})$ and $v=\Ga\ast (Ah)$. Then
$\De^2  (r-\tilde{r}-v)=0$ in $B_1$.
Thus there exists a \emph{biharmonic} 1-form $h$ in $B_1$ such that
\[A\D u=\D \tilde{r}+\D v+\ast \D g+h\qquad \text{in } B_1.\]

We estimate the terms above as follows.
Applying the Riesz potential estimates in Proposition \ref{prop:Adams 1975} and the Lorentz-H\"older inequality from Proposition \ref{prop: Lorentz-Holder inequality}, we infer
\begin{eqnarray}\label{eq: decay of nabla u 1}
\begin{aligned}
\|\nabla \tilde{r}\|_{L^{4,2}(\R^4)}&\lesssim \|I_2(\hat{K})\|_{L^{4,2}(\R^4)}\lesssim \|\hat{K}\|_{L^{\frac{4}{3},1}(\R^4)}\lesssim \|\hat{K}\|_{L^{\frac{4}{3},1}(B_1)}\\
&\lesssim \ep_{m}(\|\nabla u\|_{L^{4,2}(B_1)}+\|\nabla^2 u\|_{L^2(B_1)}).
\end{aligned}
\end{eqnarray}
Note that $|\na g|=|\na I_2(\D A\wedge \D u)| \approx |\na^2 I_2 (A\na u)|$. The singular integral theory implies
\begin{equation}\label{eq: decay of nabla u 2}
 \|\na g\|_{L^{4,2}(\R^4)}\lesssim \|A\nabla u\|_{L^{4,2}(\R^4)}
\lesssim \ep_{m}\|\nabla u\|_{L^{4,2}(B_1)}
\end{equation}
Since $\nabla^4\Ga$ is a singular operator,
$$\|\nabla^4v\|_{L^p(\R^4)}\lesssim \|f\|_{L^p(\R^4)}\lesssim \|f\|_{L^p(B_1)}.$$
 Using the Lorentz-Sobolev embedding $ W^{3,p}(\R^4)\subset L^{\bar{p},p}(\R^4)$, where $\bar{p}=\frac{4p}{4-3p}$, we derive
$$\|\nabla v\|_{L^{\bar{p},p}(\R^4)}\lesssim \|f\|_{L^p(\R^4)}\lesssim \|f\|_{L^p(B_1)}.$$
Hence, for any $\ga\in(0,1)$, H\"oder's inequality gives
\[
\|\nabla v\|_{L^{4,2}(B_{\ga})}\lesssim \ga^{4(1-\frac{1}{p})}\|\nabla v\|_{L^{\bar{p},p}(B_{\ga})}\lesssim \ga^{4(1-\frac{1}{p})}\|f\|_{L^p(B_1)}.
\]
%Similarly, in case $p\geq \frac{4}{3}$, we can conclude that
%\[ \|\nabla v\|_{L^{4,2}(B_{\ga})}\lesssim \ga^{\alpha}\|f\|_{L^p(B_1)},\] where $\alpha$ can be any number in $(0,1)$.

%{\color{blue} This only holds for $1<p<4/3$. When  $p\ge4/3$, the scaling gives a number ${1-4/\bar{p}}<1$ since we have to choose $\bar{p}>4$. Also, because when $p> 4/3$, we have ${4(1-\frac{1}{p})}>1$ (so $\ga^{4(1-\frac{1}{p})}<\ga $) and when $p=4/3$, we can not the number $4p/(4-3p)$ in that scaling. But for $p\ge 4/3$, we get an $\alpha$ which could be any number between 0 and 1.}

Now we can conclude  that for any  $\ga\in(0,1)$, there holds
\begin{equation}\label{eq: estimate of the gradient}
\begin{aligned}
\|\nabla u\|_{L^{4,2}(B_{\ga})}  & \le\|h\|_{L^{4,2}(B_{\ga})}+
\|\nabla\tilde{r}\|_{L^{4,2}(B_{\ga})}+\|
\na g\|_{L^{4,2}(B_{\ga})}+\|\nabla v\|_{L^{4,2}(B_{\ga})}\\
& \lesssim \ga\|h\|_{L^{4,2}(B_{1})}+\|\nabla \tilde{r}\|_{L^{4,2}(B_{1})}+\|\na g\|_{L^{4,2}(B_{1})}+\ga^{4(1-\frac{1}{p})}\|f\|_{L^p(B_1)}\\
& \lesssim \ga\|\na u\|_{L^{4,2}(B_{1})}+\ep_m(\|\nabla u\|_{L^{4,2}(B_1)}+\|\nabla^2 u\|_{L^2(B_1)})
 +\ga^{4(1-\frac{1}{p})}\|f\|_{L^p(B_1)}\\
&\le C (\gamma+\ep_m)\Big(\|\nabla u\|_{L^{4,2}(B_2)}+\|\De u\|_{L^{2}(B_2)}\Big)+\ga^{4(1-\frac{1}{p})}\|f\|_{L^p(B_2)},
\end{aligned}
\end{equation}
where we have used \eqref{eq: decay of nabla u 1} and \eqref{eq: decay of nabla u 2} in the last second line, and \eqref{eq: interior L2 estimate} in the last line.

3. Finally, let $\alpha=4(1-1/p)$ and $\be=(\al+1)/2$, and choose $\ep_m\le \ga$ and then choose $\ga\in (0,1)$  such that $2C\ga\le \ga^{\be}$. Combining \eqref{eq: estimate of Delta u} and \eqref{eq: estimate of the gradient} together,  we  infer that   % for $1<p<\frac{4}{3}$
\begin{equation*}\label{eq: decay via CL p < 4/3}
\begin{aligned}
\|\na u\|_{L^{4,2}(B_\ga)}+\|\De u\|_{L^2(B_\ga)}\leq  \ga^\be \left(\|\nabla u\|_{L^{4,2}(B_{2})}+\|\De u\|_{L^2(B_2)}\right)+C\ga^{4(1-\frac{1}{p})}\|f\|_{L^p(B_2)}.
\end{aligned}
\end{equation*}
The proof is complete after a scaling (see Section \ref{sec: preliminaries}) and  an iteration argument as that of Theorem \ref{thm:Sharp-Topping}. We omit the details.
\end{proof}

\begin{remark}\label{rmk:on the decay of Lamm-Riviere in Lp scale}
	Similarly as in the planar case, we may replace all the $L^{4,2}$-norms by the corresponding $L^4$-norms in the above proof to arrive at the following decay estimate in $L^4$-scale,
	\begin{eqnarray}\label{eq: decay Lamm-Riviere via CL in Lp}
	\begin{aligned}
	\|\na u\|_{L^{4}(B_\ga)}+\|\De u\|_{L^2(B_\ga)}\leq  C\ga^{\al}\left(\|\nabla u\|_{L^{4}(B_{1})}+\|\De u\|_{L^2(B_1)}+\|f\|_{L^p(B_1)}\right),
	\end{aligned}
	\end{eqnarray}
	where $\alpha=4\big(1-\frac{1}{p} \big)$ if $1<p<\frac{4}{3}$ and $\alpha$ can be any number in $(0,1)$ if $p\geq \frac{4}{3}$.
	
\end{remark}

The following example (with $n=4$) shows that the H\"older continuity is the best possible regularity that one can expect for the Lamm-Rivi\`ere system \eqref{eq:inhomogenous Lamm-Riviere system} even $f\equiv 0$.

\begin{remark}[A non-Lipschitz continuous example]\label{exam:non-Lipschitz weak solutions}

For any $n\ge 2$, let $B=B_{1/2}(0)\subset\R^{n}$ be the ball centered at the origin with radius
$\frac{1}{2}$ and define $v\colon B\to \R$ as
\[
\ensuremath{v(x)=\left(x_{1}^{2}-x_{2}^{2}\right)(-\log|x|)^{1/2}}.
\]
Direct computation shows
\[
\ensuremath{\Delta v(x)=\frac{x_{2}^{2}-x_{1}^{2}}{2|x|^{2}}\left\{ \frac{n+2}{(-\log|x|)^{1/2}}+\frac{1}{2(-\log|x|)^{3/2}}\right\} }=:f.
\]

Set $V=\big(\frac{f_{x_{1}}}{v_{x_{1}x_{1}}},0,\cdots,0\big)$ and consider $u=v_{x_1}\colon B\to \R$. Then $u\in C^{0,\al}(B)\cap W^{2,2}(B)$
for any $\al\in(0,1)$. It is straightforward to verify that $V\in W^{1,\frac{n}{2}}(B,\R^n)$
and $u$ is a weak solution of
\[
\De u=V\cdot \nabla u\qquad \text{in }B,
\]
which is of the form $\De^2 u=\Delta(V\cdot \nabla u)$. However, note that $u\in C^{\wq}(B\backslash\{0\})$ and
\[
\lim_{x\to0}u_{x_{1}}(x)=\wq.
\]
We thus infer that $u$ is not Lipschitz continuous in $B$.

\end{remark}

\subsection*{Decay estimate for the borderline case}
It is natural to ask whether one can obtain any decay estimate for the borderline case $p=1$. For later use in Theorem \ref{thm:compactness}, we deduce in below a decay estimate for the case $f\in L\log L (B_{10})$.  % This allows us to use the singular integral theory in the borderline case to obtain strong (1,1)-type bound.
Since the proof is rather similar to that used in Theorem \ref{thm:optimal Holder exponent for inho Lamm-Riviere}, we only sketch it for simplicity. It would be interesting to know whether the assumption $f\in L\log L (B_{10})$ can be replaced with  $f\in h^1(B_{10})$, where $h^1$ is the local Hardy space, for definitions see \cite[Appendix A.2]{Sharp-Topping-2013-TAMS}
\begin{proposition}\label{prop: decay estimate for the borderline case}
Let $u\in W^{2,2}(B_{10},\R^m)$ be a weak solution of \eqref{eq:inhomogenous Lamm-Riviere system} and assume  $f\in L\log L (B_{10})$.  Then there exist $0<\ga<1$ and $C>0$ such that  \begin{equation}
\|\De u\|_{L^{2}(B_{\ga})}+\|\nabla u\|_{L^{4,2}\left(B_{\ga}\right)}\le\frac{1}{2}\left(\|\De u\|_{L^{2}(B_{2})}+\|\nabla u\|_{L^{4,2}\left(B_{2}\right)}\right)+C\|f\|_{L^{1}(B_{1})}^{1/2}\|f\|_{L\log L(B_{1})}^{1/2}.\label{eq: decay estimate in LlogL}
\end{equation}
\end{proposition}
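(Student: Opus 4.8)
The plan is to re-run the proof of Theorem~\ref{thm:optimal Holder exponent for inho Lamm-Riviere} almost verbatim, changing only the estimates for the two Newtonian potentials built from the source term $f$. As there, I would first (after the rescaling of Section~\ref{sec: preliminaries}, if needed) assume the smallness condition \eqref{eq:smallness assumption} holds with $\ep_m$ as small as we please, so that the conservation law \eqref{eq: Conservation law inhomogeneous again} is available; then extend all coefficients from $B_1$ to $\R^4$ with controlled norms and extend $f$ by zero outside $B_1$. Carrying out Step~1 (decay of $\|\De u\|_{L^2(B_r)}$) and Step~2 (decay of $\|\na u\|_{L^{4,2}(B_r)}$) exactly as in that proof, every term not involving $f$ produces, after taking $\ep_m\le\ga$, at most $C(\ga+\ep_m)\big(\|\De u\|_{L^2(B_2)}+\|\na u\|_{L^{4,2}(B_2)}\big)$. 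The only genuinely new point is to control the $f$-terms --- namely $u_2:=I_2\ast(Af)$, which enters the bound \eqref{eq: before decay estimate of Delta u}--\eqref{eq: estimate of Delta u} for $\De u$, and $v:=\Ga\ast(Af)$ with $\Ga$ the fundamental solution of $\De^2$ in $\R^4$, which enters the bound \eqref{eq: estimate of the gradient} for $\na u$ --- by $\|f\|_{L^1(B_1)}^{1/2}\|f\|_{L\log L(B_1)}^{1/2}$ instead of by the power of $\|f\|_{L^p}$ used in the $L^p$ case.

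The crucial lemma I would prove is: for $g$ supported in $B_1\subset\R^4$,
\[
\|I_2\ast g\|_{L^2(B_1)}\le C\,\|g\|_{L^1(B_1)}^{1/2}\,\|g\|_{L\log L(B_1)}^{1/2}.
\]
For this, I would bound $\|I_2\ast g\|_{L^2(B_1)}^2\le\int_{B_1}(I_2|g|)^2$, use Fubini to rewrite the right-hand side as $\iint |g(y)|\,|g(z)|\,K(y,z)\,dy\,dz$ with $K(y,z)=\int_{B_1}|x-y|^{-2}|x-z|^{-2}\,dx$, and observe that $I_2\ast I_2$ is a constant multiple of the borderline Riesz potential $I_4$, whose kernel in $\R^4$ is logarithmic; a direct computation then gives $K(y,z)\lesssim\log\tfrac{4}{|y-z|}$ for $y,z\in B_1$. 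Hence the double integral is $\le\|g\|_{L^1(B_1)}\,\sup_{z\in B_1}\int_{B_1}|g(y)|\log\tfrac{4}{|y-z|}\,dy$, and the inner supremum is $\lesssim\|g\|_{L\log L(B_1)}$ by the H\"older inequality in Orlicz spaces, since $y\mapsto\log\tfrac{4}{|y-z|}$ lies in $\exp L(B_1)$ with norm bounded uniformly in $z\in B_1$. Applying this with $g=Af$ (using $\|Af\|_{L^1}\lesssim\|f\|_{L^1}$ and $\|Af\|_{L\log L}\lesssim\|f\|_{L\log L}$ since $A\in L^\infty$) gives $\|u_2\|_{L^2(B_1)}\lesssim\|f\|_{L^1(B_1)}^{1/2}\|f\|_{L\log L(B_1)}^{1/2}$, and feeding $\int_{B_\ga}|u_2|^2+\ga^4\int_{B_1}|u_2|^2\le 2\|u_2\|_{L^2(B_1)}^2$ into \eqref{eq: before decay estimate of Delta u}--\eqref{eq: estimate of Delta u} yields the $\De u$ half of \eqref{eq: decay estimate in LlogL}.

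For $v$ I would sidestep the order-$3$ potential $\na v$ in $L^{4,2}$ altogether. Since $\De\Ga=-I_2$, one has $\De v=-u_2$, and because only $\na v$ occurs in the Hodge decomposition I may normalise $v$ to have mean zero on $B_1$. The weak-type bound $I_3\colon L^1\to L^{4,\infty}$ (equivalently $\na\Ga\in L^{4,\infty}(\R^4)$) together with $L^{4,\infty}(B_1)\hookrightarrow L^2(B_1)$ gives $\|\na v\|_{L^2(B_1)}\lesssim\|Af\|_{L^1}\lesssim\|f\|_{L^1(B_1)}$, hence $\|v\|_{L^2(B_1)}\lesssim\|f\|_{L^1(B_1)}$ by the Sobolev--Poincar\'e inequality. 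The interior $L^2$-estimate for $-\De v=u_2$ then gives $\|v\|_{W^{2,2}(B_{3/4})}\lesssim\|u_2\|_{L^2(B_1)}+\|v\|_{L^2(B_1)}$, and the Lorentz--Sobolev embedding $W^{1,2}(B_{3/4})\hookrightarrow L^{4,2}(B_{3/4})$ of Proposition~\ref{prop:Lorentz-Sobolev embedding} yields
\[
\|\na v\|_{L^{4,2}(B_\ga)}\le\|\na v\|_{L^{4,2}(B_{3/4})}\lesssim\|u_2\|_{L^2(B_1)}+\|f\|_{L^1(B_1)}\lesssim\|f\|_{L^1(B_1)}^{1/2}\|f\|_{L\log L(B_1)}^{1/2},
\]
where the last step uses $\|f\|_{L^1(B_1)}\le(\log 2)^{-1}\|f\|_{L\log L(B_1)}$ to absorb the first-power term into the geometric mean. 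Inserting this into \eqref{eq: estimate of the gradient} gives the $\na u$ half; adding the two halves and then choosing $\ga$ small, and $\ep_m\le\ga$ small enough (and below the Lamm-Rivi\`ere threshold) that the contraction constant satisfies $C(\ga+\ep_m)\le\tfrac12$, produces \eqref{eq: decay estimate in LlogL}.

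The step I expect to be the main obstacle is the displayed $L\log L$ bound for $I_2\ast g$: the naive estimate $\|I_2\ast g\|_{L^2(B_1)}\lesssim\|g\|_{L\log L(B_1)}$ (which follows from $L\log L$--$\exp L$ duality alone) is not enough to close the scheme, because $\|g\|_{L\log L(B_1)}$ is not controlled by $\|g\|_{L^1(B_1)}$ on a fixed ball; one genuinely needs the sharp geometric-mean bound, which I extract from the fact that the iterated kernel of $I_2$ is the logarithmic potential $I_4$, combined with Orlicz duality. Everything else is a routine adaptation of the proof of Theorem~\ref{thm:optimal Holder exponent for inho Lamm-Riviere}.
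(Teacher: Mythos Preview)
Your overall strategy is correct and yields the stated estimate, but you arrive at the key $f$--term bounds by a genuinely different route than the paper. The paper handles both $u_2=I_2\ast(Af)$ and $\na v=\na\Ga\ast(Af)$ uniformly by Lorentz interpolation: from $f\in L\log L$ one gets (via singular integral theory on the local Hardy space) the strong endpoint bounds $\|u_2\|_{L^{2,1}}\lesssim\|f\|_{L\log L}$ and $\|\na v\|_{L^{4,1}}\lesssim\|f\|_{L\log L}$, while from $f\in L^1$ one gets the weak endpoints $\|u_2\|_{L^{2,\infty}}\lesssim\|f\|_{L^1}$ and $\|\na v\|_{L^{4,\infty}}\lesssim\|f\|_{L^1}$; interpolating in the second Lorentz index at $s=2$ then produces the geometric mean directly and simultaneously for both terms. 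Your approach instead computes the iterated kernel $I_2\ast I_2\sim I_4\sim\log$ and pairs it against $L\log L$ via Orlicz duality to get $\|u_2\|_{L^2}$, and then bootstraps $\|\na v\|_{L^{4,2}}$ from $\|u_2\|_{L^2}$ through the relation $\De v\approx u_2$ and interior $W^{2,2}$ estimates plus the embedding $W^{1,2}\hookrightarrow L^{4,2}$. The paper's argument is cleaner and immediately yields the full scale $\|u_2\|_{L^{2,s}}+\|\na v\|_{L^{4,s}}\lesssim\|f\|_{L^1}^{1-1/s}\|f\|_{L\log L}^{1/s}$ for every $1\le s<\infty$ (recorded there as a remark); your method is more elementary in that it avoids the $L\log L$/Hardy-space singular integral estimate, but it does not obviously extend beyond $s=2$.

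One small point to tidy: because you obtain $\|\na v\|_{L^{4,2}}$ only on $B_{3/4}$ via interior estimates, while the biharmonic piece $h$ in the Hodge decomposition must be bounded on $B_1$ (since $\|h\|_{L^{4,2}(B_\ga)}\lesssim\ga\|h\|_{L^{4,2}(B_1)}$ uses the full ball), you should either run your interior $L^2$ estimate on a larger ball---everything is defined on $\R^4$ and $Af$ is compactly supported, so your iterated-kernel bound for $u_2$ and the weak-type bound for $\na v$ hold just as well on $B_2$---or equivalently estimate $h$ on $B_{3/4}$ instead of $B_1$. This is a cosmetic adjustment and does not affect the validity of your scheme.
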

\begin{proof}
As in the proof of Theorem \ref{thm:optimal Holder exponent for inho Lamm-Riviere}, we first take $\ep_m$ sufficiently small so that the conservation law holds, and then extend all functions from $B_1$ to $\R^4$ with controlled norms.

1. Decay estimate of $\int_{B_{r}}|\De u|^{2}$.

We shall use the same notations as in Step 1 of the proof of Theorem 1.1. Note that
$f\in L\log L(B_{10})$ implies $Af\in L\log L(B_{10})$ and $\|Af\|_{L\log L(B_{1})}\lesssim\|f\|_{L\log L(B_{1})}$.
So $u_{2}\in W^{2,1}(\R^{4})\subset W^{1,4/3,1}(\R^{4})\subset L^{2,1}(\R^{4})$
and
\[
\|\na^{2}u_{2}\|_{L^{1}(\R^{4})}\lesssim\|Af\|_{L\log L(\R^{4})}\lesssim\|f\|_{L\log L(B_{1})}
\]
and
\[
\|\na^{2}u_{2}\|_{L^{1,\wq}(\R^{4})}\lesssim\|Af\|_{L^{1}(\R^{4})}\lesssim\|f\|_{L^{1}(B_{1})},
\]
from which it follows
\[
\|u_{2}\|_{L^{2,1}(B_{1})}\lesssim\|f\|_{L\log L(B_{1})}
\]
and
\[
\|u_{2}\|_{L^{2,\wq}(B_{1})}\lesssim\|f\|_{L^{1}(B_{1})}.
\]
Therefore, for any $1\le s<\wq$, we have
\[
\|u_{2}\|_{L^{2,s}(B_{1})}\le\|u_{2}\|_{L^{2,1}(B_{1})}^{1/s}\|u_{2}\|_{L^{2,\wq}(B_{1})}^{1-1/s}\lesssim\|f\|_{L^{1}(B_{1})}^{1-1/s}\|f\|_{L\log L(B_{1})}^{1/s}.
\]

Now using (\ref{eq: before decay estimate of Delta u}) and taking
$s=2$ in the above estimate, we obtain
\begin{equation}
\begin{aligned}\int_{B_{\tau}}|\Delta u|^{2} & \lesssim\left(\tau^{4}+\epsilon_{m}^{2}\right)\int_{B_{2}}|\De u|^{2}+\epsilon_{m}^{2}\|\nabla u\|_{L^{4,2}\left(B_{2}\right)}^{2}+\int_{B_{1}}|u_{2}|^{2}\\
 & \lesssim\left(\tau^{4}+\epsilon_{m}^{2}\right)\int_{B_{2}}|\De u|^{2}+\epsilon_{m}^{2}\|\nabla u\|_{L^{4,2}\left(B_{2}\right)}^{2}+\|f\|_{L^{1}(B_{1})}^{1/2}\|f\|_{L\log L(B_{1})}^{1/2}.
\end{aligned}
\label{eq: decay of Delta u in LlogL}
\end{equation}

2. Decay estimate of $\|\nabla u\|_{L^{4,2}\left(B_{r}\right)}^{2}$.

Use the same notations $\tilde{r},r,g,h,v$ as in Step 2 of the
proof of Theorem 1.1. We have $v\in W^{4,1}(\R^{4})$ with
\[
\|\na v\|_{L^{4,1}(\R^{4})}\lesssim\|\na^{4}v\|_{L^{1}(\R^{4})}\lesssim\|f\|_{L\log L(B_{1})}
\]
and
\[
\|\na v\|_{L^{4,\wq}(\R^{4})}\lesssim\|\na^{4}v\|_{L^{1,\wq}(\R^{4})}\lesssim\|f\|_{L^{1}(B_{1})}.
\]
As a result,
\[
\|\na v\|_{L^{4,1}(B_{1})}\lesssim\|f\|_{L\log L(B_{1})}
\]
and
\[
\|\na v\|_{L^{4,\wq}(B_{1})}\lesssim\|f\|_{L^{1}(B_{1})}.
\]
Therefore, for any $1<s<\wq$, we have
\[
\|\na v\|_{L^{4,s}(B_{1})}\le\|\na v\|_{L^{4,1}(B_{1})}^{1/s}\|\na v\|_{L^{4,\wq}(B_{1})}^{1-1/s}\lesssim\|f\|_{L^{1}(B_{1})}^{1-1/s}\|f\|_{L\log L(B_{1})}^{1/s}.
\]
Consequently, we obtain
\begin{equation}
\begin{aligned}\|\nabla u\|_{L^{4,2}\left(B_{\ga}\right)} & \lesssim\left(\ga+\ep_{m}\right)\left(\|\De u\|_{L^{2}(B_{2})}+\|\nabla u\|_{L^{4,2}\left(B_{2}\right)}\right)+\|\na v\|_{L^{4,2}(B_{1})}\\
 & \lesssim\left(\ga+\ep_{m}\right)\left(\|\De u\|_{L^{2}(B_{2})}+\|\nabla u\|_{L^{4,2}\left(B_{2}\right)}\right)+\|f\|_{L^{1}(B_{1})}^{1/2}\|f\|_{L\log L(B_{1})}^{1/2}.
\end{aligned}
\label{eq: Decay of gradient u in LlogL}
\end{equation}

Finally, combining (\ref{eq: decay of Delta u in LlogL}) and (\ref{eq: Decay of gradient u in LlogL}),
we conclude
\[
\|\De u\|_{L^{2}(B_{\ga})}+\|\nabla u\|_{L^{4,2}\left(B_{\ga}\right)}\lesssim\left(\ga+\ep_{m}\right)\left(\|\De u\|_{L^{2}(B_{2})}+\|\nabla u\|_{L^{4,2}\left(B_{2}\right)}\right)+\|f\|_{L^{1}(B_{1})}^{1/2}\|f\|_{L\log L(B_{1})}^{1/2}.
\]
Choosing $\ga,\ep_{m}$ small to obtain the desired estimate.
\end{proof}
\begin{remark} \label{rem: all decay in LlogL} Similarly, one can show that for any $1\le s<\wq$,
\[
\|\De u\|_{L^{2,s}(B_{\ga})}+\|\nabla u\|_{L^{4,s}\left(B_{\ga}\right)}\le\frac 12\left(\|\De u\|_{L^{2,s}(B_{2})}+\|\nabla u\|_{L^{4,s}\left(B_{2}\right)}\right)+C\|f\|_{L^{1}(B_{1})}^{1-1/s}\|f\|_{L\log L(B_{1})}^{1/s}.
\]
\end{remark}

\section{Higher order regularity}\label{sec:higher order Sobolev regularity}

In this section, we shall prove the higher order regularity asserted in Theorem \ref{thm:optimal global estimate for inho Lamm-Riviere}. The key to derive the improved regularity is to use Lemma \ref{lemma:improved Riesz potential}. To illustrate the scheme clearly, we begin with the simple first order case. Throughout this section, we assume $$1<p<4/3$$ and  set
$$\al=4(1-1/p) \quad \text{ and } \quad M\equiv \|u\|_{W^{2,2}(B_1)}+\|f\|_{L^p(B_1)}.$$

\subsection{$W^{1,q}$-estimate with some $q>4$}

The decay estimate in Theorem \ref{thm:optimal Holder exponent for inho Lamm-Riviere} together with H\"older's inequality imply that $\Delta u\in M^{1,2+\alpha}(B_{\frac{1}{2}})$, that is,
\[
\sup_{x\in B_{\frac{1}{2}},0<r<\frac{1}{2}}r^{-(2+\al)}\int_{B_{r}(x)}|\De u|\le CM.
\]

We claim that $\na u\in L^q(B_{1/4})$ with \[\|\na u\|_{L^q(B_{1/4})}\le C_{\al}M, \] where
$$q=\frac{2(2-\alpha)}{1-\alpha}=\frac{2(4-2p)}{4-3p}>4.$$

To prove this claim, take a cut-off function $\eta\in C^{\wq}_0(B_{1/2})$ such that $0\le \eta \le 1$ in $B_{1/2}$, $\eta\equiv 1$ in $B_{1/4}$. Then $\eta u\in W^{2,2}(\R^4)$. Thus
$\eta u=I_2 (-\De (\eta u))$, where $I_2$ is the fundamental solution of $-\De $ in $\R^4$. As a consequence,
\[|\na (\eta u)|\lesssim I_1(\eta |\De u|+|\na \eta||\na u|+|\De \eta ||u|). \]

Easy to verify that $\eta |\De u|\in M^{1,2+\alpha}(\R^4)\cap L^2(\R^4)$. Hence by applying Lemma \ref{lemma:improved Riesz potential} (with $\al=1, \be=2-4(1-1/p), p=1$) we find that  $I_1(\eta |\De u|)\in L^q(\R^4)$ with
\[\|I_1(\eta |\De u|)\|_{L^q(\R^4)}\le  C_{\al}M^{\frac {\al}{2-\al}}\|\eta \De u\|_{L^2(\R^4)}^{1-\frac {\al}{2-\al}}\le C_{\al}M.\]
Note that the lower order term $|\na \eta||\na u|+|\De \eta ||u|$ belongs to $ L^4(\R^4)$. Thus the usual Riesz potential theory implies that
\[\|I_1(|\na \eta||\na u|+|\De \eta ||u|)\|_{L^q(\R^4)}\le C_{\al}M.\]
The claim follows easily from the above two estimates since $\na u=\na (\eta u)$ in $B_{1/4}$.

Note that we have improved the Lebesgue integrability of  $\na u$ from 4 to $q$, even though  $q$ is not the final optimal exponent.

\subsection{$W^{2,q}$-estimate with any $q<\frac{2p}{2-p}$}
We now derive the second order regularity. More precisely, we shall prove the following result.

\begin{proposition}\label{prop:second order Sobolev regularity}
Let $u\in W^{2,2}(B_{10},\R^m)$ be a weak solution of the inhomogenuous system \eqref{eq:inhomogenous Lamm-Riviere system} with $f\in L^p(B_{10})$ for $p\in (1,\frac{4}{3})$.  Then $u\in W^{2,q}_{\loc}(B_{10})$  for any $q<\frac{2p}{2-p}$.
\end{proposition}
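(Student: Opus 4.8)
The plan is to bootstrap once more, feeding the improved gradient integrability $\na u\in L^q(B_{1/4})$ with $q=\frac{2(4-2p)}{4-3p}>4$ back into the conservation law, and then reading off second-order regularity for $u$ from the rewritten equation. First I would localize: fix a cut-off $\eta\in C_0^\infty(B_{1/4})$ with $\eta\equiv1$ on $B_{1/8}$, so that it suffices to estimate $\na^2(\eta u)$. The natural route is to apply the conservation law \eqref{eq:conservation law of Lamm Riviere}, $\De(A\De u)={\rm div}K+Af$ in $B_8$, and rewrite it (as in Step 2 of the proof of Theorem \ref{thm:optimal Holder exponent for inho Lamm-Riviere}) in the form $\De\,{\rm div}(A\na u)={\rm div}(\hat K)+Af$, with $\hat K=K+\na^2A\cdot\na u+\na A\cdot\na^2 u$. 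Since $|A-\Id|\le\ep_m$ one has $|\De u|\approx|A\De u|$ and, more to the point, $\De u=A^{-1}(\De r+{\ast}\De g$-type terms$)$ after a Hodge decomposition of $A\na u$ on $\R^4$; equivalently $\na^2(\eta u)$ can be expressed via $I_2$ applied to $\eta$ times the right-hand side plus lower-order commutator terms.

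The key point is to check that each piece of the right-hand side, once the improved integrability $\na u\in L^q$ is available, lies in $L^r$ for every $r<\frac{2p}{2-p}$, so that the standard Calderón–Zygmund theory ($\na^2\Gamma$ and $\na I_2$ are Calderón–Zygmund operators on $L^r$, $1<r<\infty$) gives $\na^2(\eta u)\in L^r$. Concretely: $f\in L^p$ with $p<\frac{2p}{2-p}$, so the $Af$ contribution is fine; the terms in $K$ and the commutators $\na^2A\cdot\na u$, $\na A\cdot\na^2 u$, $A\na(V\cdot\na u)$ involve products of the coefficients $V\in W^{1,2}$, $w\in L^2$, $\om\in L^2$, $F\in L^{4/3,1}$, $B\in W^{1,4/3}$, $A\in W^{2,2}\cap L^\infty$ with $\na u$ (now in $L^q$, $q>4$) and with $\De u\in L^2$. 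Using Hölder (and the Lorentz–Hölder inequality, Proposition \ref{prop: Lorentz-Holder inequality}) one computes the resulting exponent: e.g. $\na A\cdot\De u\in L^{r}$ with $\frac1r=\frac12+\frac12=1$? — no, one must be careful and use $\na A\in L^{4,2}$, $\De u\in L^2$ to land in $L^{4/3,1}\subset L^{4/3}$, which via $\na I_2$ does not immediately give what we want, so the actual argument is: ${\rm div}(\hat K)$ convolved with $\Gamma$ twice, i.e. $I_2\ast{\rm div}(\hat K)$ has two derivatives landing as $\na^2 I_2\ast\hat K\in L^{4/3}$, and bootstrapping with the improved $L^q$ bound for $\na u$ upgrades the exponent of $\hat K$ from $4/3$ toward $\frac{2p}{2-p}$. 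One then iterates this finitely many times, each step raising the integrability exponent by a fixed gain, stopping just below the critical value $\frac{2p}{2-p}$ (the "any $q<\frac{2p}{2-p}$" in the statement reflecting that the endpoint is lost at each Sobolev/Lorentz embedding step unless one tracks Lorentz refinements).

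The main obstacle I anticipate is the bookkeeping of exponents in the product estimates for $\hat K$ — in particular making sure that the worst term, which is the commutator $\na A\cdot\na^2 u$ coupling the limited regularity $\na A\in L^{4,2}$ (only $L^4$ from $A\in W^{2,2}$ in dimension four) with the second derivative $\na^2 u$ that we are trying to control — does not create a circular estimate. The resolution should be that on the right-hand side one only ever needs $\De u$ (or $\na^2 u$) in the $L^2$-norm already controlled by the decay estimate \eqref{eq: decay estimate for 4th order}/\eqref{eq: interior L2 estimate}, multiplied against coefficients, and the genuinely new input is $\na u\in L^q$; so the bootstrap is one-directional. Once $\na^2 u\in L^r$ for all $r<\frac{2p}{2-p}$ is established locally, a covering argument upgrades it from $B_{1/8}$ to $B_{10}$ in the $W^{2,q}_{\loc}$ sense, giving the Proposition. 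The final quantitative bound $\|u\|_{W^{2,q}(B_\rho)}\lesssim M$ comes along for free from the same chain of inequalities, with $M=\|u\|_{W^{2,2}(B_1)}+\|f\|_{L^p(B_1)}$.
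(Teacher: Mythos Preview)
Your proposal has a genuine gap: the bootstrap you describe does not actually gain anything. You claim that once $\na u\in L^{q}$ with $q>4$, the right-hand side $\hat K$ lands in $L^{r}$ for $r$ approaching $\frac{2p}{2-p}$, and that standard Calder\'on--Zygmund then upgrades $\na^{2}u$. But the limiting terms in $K$ are not the ones involving $\na u$---they are the ones involving $\na^{2}u$ (equivalently $\De u$): for instance $\na A\cdot\De u$ and $A\,V\cdot\na^{2}u$, where $\na A,V\in L^{4}$ pair against $\De u\in L^{2}$ to give only $L^{4/3}$. So $K\in L^{4/3}$ regardless of how much you have improved $\na u$, and then $v_{1}=I_{2}(\Sd K)\in W^{1,4/3}\hookrightarrow L^{2}$ puts you right back at $\De u\in L^{2}$. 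More generally, if $\na^{2}u\in L^{q_{0}}$ then $K\in L^{\tilde q_{0}}$ with $1/\tilde q_{0}=1/4+1/q_{0}$, and $I_{2}(\Sd K)\in W^{1,\tilde q_{0}}\hookrightarrow L^{q_{0}}$: no gain. This is exactly the content of Remark~\ref{rmk:on bootstrap via classical potential theory}. Your attempted resolution---that on the right-hand side $\na^{2}u$ only enters at the $L^{2}$ level while the ``new input'' $\na u\in L^{q}$ drives the bootstrap---is backwards: it is precisely the $\na^{2}u$ terms that block the classical iteration.

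The missing ingredient is Adams' improved Riesz potential estimate, Lemma~\ref{lemma:improved Riesz potential}. The decay estimate~\eqref{eq: decay estimate for 4th order} gives not merely $\De u\in L^{2}$ but the Morrey bound $K\in M^{1,1+\al}(B_{1/2})$, and it is this extra Morrey information (which is fixed throughout the bootstrap) that, via Lemma~\ref{lemma:improved Riesz potential}, converts $K\in L^{\tilde q_{0}}$ into $I_{1}(K)\in L^{\tilde q_{0}\frac{3-\al}{2-\al}}$ rather than merely $L^{q_{0}}$. The factor $\frac{3-\al}{2-\al}>1$ is the genuine gain per step, and iterating it pushes $q_{0}$ up to (but not including) $\frac{2p}{2-p}$. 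Without invoking the Morrey structure of $K$ and Lemma~\ref{lemma:improved Riesz potential}, your argument stalls at the very first step.
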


\begin{proof}
 By the definition \eqref{eq: K} of $K$ and the decay estimate in Theorem \ref{thm:optimal Holder exponent for inho Lamm-Riviere}, we can easily verify that
\[
\sup_{x\in B_{1/2},0<r<1/2}r^{-\frac{4}{3}\al}\int_{B_{r}(x)}|K|^{4/3}\le CM.
\]
By H\"older's inequality, this implies  that $K\in M^{1,1+\alpha}(B_{1/2})$, that is,
\[
\sup_{x\in B_{1/2},0<r<1/2}r^{-(1+\al)}\int_{B_{r}(x)}|K|\le CM.
\]
Now we extend  $K$ from $B_{1/2}$ into $\R^4$ such that
$$\|K\|_{M^{1,1+\al}(\R^4)}\lesssim \|K\|_{M^{1,1+\al}(B_{1/2})}\lesssim M$$ and
$\|K\|_{L^{4/3}(\R^4)}\lesssim \|K\|_{L^{4/3}(B_{1/2})}$.
Then it follows from Lemma \ref{lemma:improved Riesz potential} (with $\al=1$, $\be=3-\al$, $n=4$, $p=4/3$) that
\[
I_{1}(K)\in L^{\frac{4}{3}\frac{3-\al}{2-\al}}(\R^4).
\]
Write \[q_{0}=\frac{4}{3}\frac{3-\al}{2-\al}.\]
 As a result, $I_{2}({\rm div}K)\approx  I_{1}(K)\in L^{q_{0}}(\R^4)$.
 %A simple computation shows
%\[\frac{3-\al}{2-\al}=\frac{4-p}{4-2p}.\]

%Now we can prove Proposition \ref{prop:second order Sobolev regularity}.
Define $v_1$ and $v_2$ in $\R^4$ as
\begin{eqnarray*}
	v_1=I_{2}({\rm div}K)\approx I_{1}(K), &  & v_2=I_{2}(Af).
\end{eqnarray*}
Here we also extend $A,f$ from $B_{1/2}$ into $\R^4$ with controlled norms.  Then, our previous estimate shows that $v_1\in L^{q_{0}}(\R^4)$ and
\[
v_2\in W^{2,p}(\R^4)\subset L^{2p/(2-p)}(\R^4).
\]
Since $p>1$, we have $q_{0}<\frac{2p}{2-p}$. Thus,  using the fact that $A\De u-v_1-v_2$
is a harmonic function in $B_{1/2}$, we infer that $\De u\in L^{q_0}(B_{1/4}).$ In other words, we obtain
\[u\in W_{\loc}^{2,q_{0}}(B_{1}).\]
Note that $\al>0$ implies $q_0>2$. Thus we have improved the regularity of $u$ from $W^{2,2}$ to $W^{2, q_0}$.
%\textbf{5. $W_{\loc}^{2,q}(B_{1})$ estimate. }
%We need the improved Riesz potential theory again.

Next we use a bootstrapping argument to repeatedly improve the second order regularity of $u$.
We claim that
\begin{eqnarray}\label{eq: bootstrapping W2,q}
\begin{aligned}
u\in W^{2,q}_{loc}\quad \text{with }q<\frac{2p}{2-p} \Longrightarrow u\in W^{2,\frac{4q}{4+q}\frac{3-\alpha}{2-\alpha}}_{loc}.
\end{aligned}
\end{eqnarray}
This is true because if $u\in W^{2,q}_{\loc}$ with $q<\frac{2p}{2-p}$, then the definition \eqref{eq: K} of $K$ implies that $K\in L^{4}\cdot L^{q}\subset L^{\tilde{q}_{0}}$ with $1/\tilde{q}_{0}=1/4+1/q$.
%Note that $\tilde{q}_{0}>4/3$.
%By the previous discussion, we have $\na^{2}u\in L^{q_{0}}$ for some $q_{0}>2$ and $\na u\in L^{s}$. Then, this implies that $K\in L^{4}\cdot L^{q_{0}}\subset L^{\tilde{q}_{0}}$ with $1/\tilde{q}_{0}=1/4+1/q_{0}$. Note that $\tilde{q}_{0}>4/3$, which means that $K$ has better integrability now. So by the improved Riesz potential theory, since $K\in M^{1,1+\la}(B_{1/2})$, we have
Since $K\in M^{1,1+\al}(B_{\frac{1}{2}})$, Lemma \ref{lemma:improved Riesz potential} implies that
\[
v_1\approx I_{1}(K)\in L^{\tilde{q}_{0}\frac{3-\al}{2-\al}}=L^{\frac{4q}{4+q}\frac{3-\alpha}{2-\alpha}}.
\]
Notice that
\[\frac{4q}{4+q}\frac{3-\alpha}{2-\alpha}<\frac{2p}{2-p}\Longleftrightarrow q<\frac{2p}{2-p}\]
and that when $q\nearrow \frac{2p}{2-p}$, we have $\frac{4q}{4+q}\frac{3-\alpha}{2-\alpha}\nearrow \frac{2p}{2-p}$.
Also recall that $v_2\in W^{2,p}\subset L^{\frac {2p}{2-p}}$. Thus the same argument as the above implies that
$\De u\in L^{\frac{4q}{4+q}\frac{3-\alpha}{2-\alpha}}_{\loc}(B_1)$. That is,  $$u\in W^{2,\frac{4q}{4+q}\frac{3-\alpha}{2-\alpha}}.$$  Thus, by iterating the bootstrapping claim \eqref{eq: bootstrapping W2,q}, we find that
$$u\in W^{2,q}_{\loc}\quad \text{for all }q<\frac{2p}{2-p}.$$
The proof of Proposition \ref{prop:second order Sobolev regularity} is complete.
\end{proof}

\begin{remark}\label{rmk:on bootstrap via classical potential theory}
%	We already know that $u\in W^{3,\frac{4}{3}}$ from Lamm-Riviere.
As in the second order case of Sharp-Topping \cite{Sharp-Topping-2013-TAMS}, the classical Calder\'on-Zygmund theory does not give additional improvement on the second order Sobolev exponent. Indeed, by the previous step, we have $\na^{2}u\in L^{q_{0}}$ for some $q_{0}>2$ and $\na u\in L^{s}$. Then, this implies that $K\in L^{4}\cdot L^{q_{0}}\subset L^{\tilde{q}_{0}}$ with $1/\tilde{q}_{0}=1/4+1/q_{0}$. As a result, $v=I_{2}({\rm div}K)\in W^{1,\tilde{q}_{0}}\subset L^{q_{0}}$, and $w=I_{2}(Af)\in W^{2,p}$. Since $A\De u-v-w$ is harmonic, this gives $u\in W^{3,\tilde{q}_{0}}\subset W^{2,q_{0}}$. Note that we do \textbf{not} obtain any improvement for the integrability of $\na^{2}u$. This reflects the importance of Lemma \ref{lemma:improved Riesz potential} in obtaining higher Sobolev regularity.
\end{remark}

%\begin{remark}\label{rmk:compare first order Sobolev exponent}
%	Note that $W^{2,}$
%\end{remark}

\subsection{$W^{3,q}$-estimate with $q>\frac{4}{3}$}
%\textbf{6. $W_{\loc}^{3,\frac{4}{3}+\ep}$ estimate}
With Proposition \ref{prop:second order Sobolev regularity} at hand, we immediately obtain the third order  regularity.
\begin{proposition}\label{prop:third order Sobolev regularity}
	Let $u\in W^{2,2}(B_{10},\R^m)$ be a weak solution of the inhomogenuous system \eqref{eq:inhomogenous Lamm-Riviere system} with $f\in L^p(B_{10})$ for $p\in (1,\frac{4}{3})$.   Then $u\in W^{3,q}_{\loc}(B_1)$  for any $q<\frac{4p}{4-p}$.
\end{proposition}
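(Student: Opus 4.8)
The plan is to feed the second-order gain of Proposition~\ref{prop:second order Sobolev regularity} back into the conservation law~\eqref{eq:conservation law of Lamm Riviere} and differentiate it once; no analytic tool beyond classical Calder\'on-Zygmund theory is needed at this stage, which is why the third-order regularity comes "for free" once Proposition~\ref{prop:second order Sobolev regularity} is available. Fix $q<\frac{4p}{4-p}$. By Proposition~\ref{prop:second order Sobolev regularity} we may pick $s<\frac{2p}{2-p}$ with $\na^{2}u\in L^{s}_{\loc}$, and then the Sobolev embedding $W^{1,s}(\R^{4})\hookrightarrow L^{s^{*}}$, $\frac{1}{s^{*}}=\frac1s-\frac14$, gives $\na u\in L^{s^{*}}_{\loc}$; as $s\nearrow\frac{2p}{2-p}$ one has $\frac1s\searrow\frac{2-p}{2p}$ and $\frac{1}{s^{*}}\searrow\frac{4-3p}{4p}$. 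We will take $s$ close enough to $\frac{2p}{2-p}$ that every Lebesgue exponent produced below exceeds $q$.

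First I would check that $K\in L^{q}_{\loc}$. Going term by term through~\eqref{eq: K}: using $A\in W^{2,2}\cap L^{\wq}$ (so $\na A\in L^{4}$, indeed $L^{4,2}$, and $\De A\in L^{2}$), $V\in W^{1,2}\hookrightarrow L^{4}$ with $\na V\in L^{2}$, $w\in L^{2}$, and $B\in W^{1,4/3}(\R^{4})\hookrightarrow L^{2}$, every summand of $K$ is a product of an $L^{4}$- or $L^{2}$-coefficient against one of $\De u\in L^{s}_{\loc}$, $V\cdot\na^{2}u\in L^{4}\cdot L^{s}$, or $\na u\in L^{s^{*}}_{\loc}$. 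Each such product lies in $L^{r}_{\loc}$ with $\frac1r=\frac14+\frac1s\to\frac{4-p}{4p}$ for the terms involving $\na^{2}u$ and $\frac1r=\frac12+\frac{1}{s^{*}}\to\frac{4-p}{4p}$ for those involving only $\na u$; hence $K\in L^{q}_{\loc}$ for our fixed $q$. As in the previous proofs, extend $K$, $A$ and $f$ from $B_{1/2}$ to $\R^{4}$ with norms controlled by the originals.

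Next, let $I_{2}$ be the fundamental solution of $-\De$ in $\R^{4}$ and set $v_{1}=I_{2}\ast\divergence K$ and $v_{2}=I_{2}\ast(Af)$ in $\R^{4}$, so that $A\De u-v_{1}-v_{2}$ is harmonic, hence smooth, in $B_{1/2}$. Writing $v_{1}=\sum_{i}(\pa_{x_{i}}I_{2})\ast K_{i}$, the map $K\mapsto\na v_{1}$ is a Calder\'on-Zygmund singular integral, so $\na v_{1}\in L^{q}(\R^{4})$; and $Af\in L^{p}$ gives $v_{2}\in W^{2,p}(\R^{4})$, whence $\na v_{2}\in W^{1,p}(\R^{4})\hookrightarrow L^{\frac{4p}{4-p}}(\R^{4})$. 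Adding the smooth harmonic contribution, $\na(A\De u)\in L^{q}_{\loc}(B_{1/2})$. Since $A$ is invertible (being within $\ep_{m}$ of $SO_{m}$) with $A^{-1}\in L^{\wq}$ and $\na(A^{-1})=-A^{-1}(\na A)A^{-1}\in L^{4}$, and $A\De u\in L^{s}_{\loc}$, the product rule gives $\na\De u=\na(A^{-1})(A\De u)+A^{-1}\na(A\De u)\in L^{q}_{\loc}$, i.e.\ $\De u\in W^{1,q}_{\loc}(B_{1})$, and interior elliptic regularity then yields $u\in W^{3,q}_{\loc}(B_{1})$.

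The main point is that there is essentially no hard step here. In contrast with the $W^{2,q}$-stage, where the classical potential theory stalls and the Adams improvement of Lemma~\ref{lemma:improved Riesz potential} is indispensable (cf.\ Remark~\ref{rmk:on bootstrap via classical potential theory}), differentiating the Newtonian kernel once already produces an operator bounded on $L^{q}$, and $K$ itself already belongs to $L^{q}_{\loc}$ for every $q<\frac{4p}{4-p}$. The only care required is the bookkeeping of the six Lebesgue exponents arising from~\eqref{eq: K}, which is exactly what forces the simultaneous use of $\na^{2}u\in L^{\frac{2p}{2-p}-}_{\loc}$ and of its Sobolev consequence $\na u\in L^{\frac{4p}{4-3p}-}_{\loc}$; the endpoint $q=\frac{4p}{4-p}$ is deliberately left out at this stage and recovered only in Section~\ref{sec:optimal global estimates} through uniform estimates and a limiting argument.
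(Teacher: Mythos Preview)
Your proof is correct and follows essentially the same route as the paper: use Proposition~\ref{prop:second order Sobolev regularity} to place $K$ in $L^{q}_{\loc}$ for any $q<\frac{4p}{4-p}$, split $A\De u=v_{1}+v_{2}+h$ with $v_{1}=I_{2}\ast\divergence K$, $v_{2}=I_{2}\ast(Af)$ and $h$ harmonic, and read off $\na(A\De u)\in L^{q}_{\loc}$ from classical Calder\'on--Zygmund. Your write-up is in fact more explicit than the paper's on two points the paper glosses over: the term-by-term bookkeeping showing that both the $L^{4}\cdot L^{s}$ products (involving $\na^{2}u$) and the $L^{2}\cdot L^{s^{*}}$ products (involving $\na u$) land in the same $L^{q}$ space via the identity $\frac12+\frac{1}{s^{*}}=\frac14+\frac1s$, and the passage from $\na(A\De u)\in L^{q}_{\loc}$ to $\na\De u\in L^{q}_{\loc}$ using the product rule with $A^{-1}$.
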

\begin{proof}
By Proposition \ref{prop:second order Sobolev regularity}, we know that $u\in W_{\loc}^{2,q}$ for all $q<\frac{2p}{2-p}$. As a consequence, the definition \eqref{eq: K} of $K$ implies that  $K\in L^{4}\cdot L^{q}\subset L^{\tilde{q}}$
with $1/\tilde{q}=1/4+1/q$. This implies $v_1:=I_{2}({\rm div}K)\in W^{1,\tilde{q}}$. On the other hand, the classical Calder\'on-Zygmund estimate implies that $v_2=I_{2}(Af)\in W^{2,p}\subset W^{1,\frac{4p}{4-p}}$. Note that
%when $q=\frac{2p}{2-p}$, $\tilde{q}=\frac{4p}{4-p}$ and when
$q<\frac{2p}{2-p}$ if and only if $\tilde{q}<\frac{4p}{4-p}$. Therefore,  by the same argument as that used in the proof of Proposition \ref{prop:second order Sobolev regularity}, we infer that
%\[
%\tilde{q}<\frac{4p}{4-p},(\Leftrightarrow\frac{1}{q}>\frac{1}{p}-\frac{1}{2})
%\]
$u\in W^{3,\tilde{q}}_{\loc}$ for all $\tilde{q}<\frac{4p}{4-p}$.
\end{proof}

\begin{remark}\label{rmk: best 3rd order regularity}
  In the next section, we will show that $u\in W^{2,\frac{2p}{2-p}}_{\loc}$. Then, by the same argument as the above, we conclude that $u\in W^{3,\frac{4p}{4-p}}_{\loc}$.
\end{remark}

We would like to point out that the third order  regularity as obtained in Theorem \ref{thm:optimal Holder exponent for inho Lamm-Riviere} is the best possible, and in general there is no hope to obtain fourth order  regularity.

\begin{example}[Solutions without $W^{4,p}$-regularity]\label{example:no W4p estimate}
Let $g\colon \R\to \R$ be a continuous function with the following properties:
\begin{itemize}
\item $g\in W^{3,2}\big((-1,1)\big)$ but $g\not\in W^{4,1}\big((-1,1)\big)$;
%but $g'''$ fails to be continuous on $(-1,1)$. In particular, $g\notin W^{4,1}\big((-1,1)\big)$;
\item $g\geq 1$ on $(-1,1)$.
\end{itemize}
%For example, one can take $h(t)=t^{\frac{1}{2}}\big(-\log t\big)^{\beta}\chi_{(0,\infty)}(t)$ with $\beta>\frac{1}{2}$ and then set $g$ such that $g'''(t)=h(t)$ for each $t\in \R$ and then integrate back to obtain $g$ with $g(0)=1$. It is easy to verify that $g$ satisfies the above properties.

Consider the map $u\colon B_1\to \R$, $B_1\subset \R^4$, defined by
$$u(x)=x_1g(x_2).$$
Set
$$V_1(x)=x_1\frac{g''(x_2)}{g(x_2)}\quad \text{and}\quad V(x)=(V_1(x),0,0,0).$$
It is straightforward to verify that $V\in W^{1,2}\big(B_1\big)$ and
\begin{equation*}
\Delta^2 u=\Delta\big(V\cdot \nabla u\big)\quad \text{in } B_1.
\end{equation*}
However, the regularity of $g$ implies that $u\notin W^{4,1}(B_1)$.

\end{example}
%{\color{blue}Pay attention to domains, loc etc in the final check!}

\section{Optimal local estimates}\label{sec:optimal global estimates}

In this section we complete the proof of Theorem \ref{thm:optimal global estimate for inho Lamm-Riviere}.
First we give the following lemma for later usage.
\begin{lemma} There exists a constant $C=C(p)>0$ satisfying the
following property. Let $B_{1}=B_{1}(0)\subset\R^{4}$. For any $x_{0}\in B_{1/2}(0)$
and $0<R<1/2$, if $h$ satisfies the equation
\begin{equation}
\begin{cases}
\De h=0 & \text{in }B_{R}(x_{0}),\\
h=A\De u & \text{on }\pa B_{R}(x_{0}),
\end{cases}\label{eq: harmonic part}
\end{equation}
then
\begin{equation}\label{eq: uniform estimate for harmonic part}
\|h\|_{L^{\bar{p}}(B_{R/2}(x_{0}))}\le C\left(\|u\|_{W^{2,2}(B_{1})}+\|f\|_{L^{p}(B_{1})}\right).
\end{equation}
\end{lemma}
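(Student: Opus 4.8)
The goal is a bound on the harmonic replacement $h$ of $A\Delta u$ on a small ball $B_R(x_0)$, uniform in $x_0 \in B_{1/2}$ and $R < 1/2$, in terms of the fixed quantities $\|u\|_{W^{2,2}(B_1)} + \|f\|_{L^p(B_1)}$, where $\bar p$ denotes the exponent produced in the previous section (so that $\Delta u \in L^{\bar p}_{\loc}$ by Proposition~\ref{prop:second order Sobolev regularity}, with $\bar p < \tfrac{2p}{2-p}$). The natural route is: (1) observe that $h - A\Delta u$ is the solution of the Dirichlet problem with zero boundary data and right-hand side $-\Delta(A\Delta u)$, which by the conservation law \eqref{eq:conservation law of Lamm Riviere} equals $-\divergence K - Af$; (2) decompose $A\Delta u = v_1 + v_2 + h$ on $B_R(x_0)$ in the usual way, with $v_1 \approx I_1(K)$ and $v_2 = I_2(Af)$ (localized via cut-offs supported in $B_1$, extending $K, A, f$ to $\R^4$ with controlled norms as in Section~\ref{sec:higher order Sobolev regularity}); (3) estimate $v_1, v_2$ globally in $L^{\bar p}$ and then bound $h = A\Delta u - v_1 - v_2$ in $L^{\bar p}(B_{R/2}(x_0))$ by the triangle inequality, using $\|A\Delta u\|_{L^{\bar p}(B_1)} \lesssim \|\Delta u\|_{L^{\bar p}(B_1)} \lesssim M$ from the higher-order regularity already proved.

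In more detail: for $v_2 = I_2(Af)$ one uses $f \in L^p(B_1)$, so $v_2 \in W^{2,p}(\R^4) \subset L^{2p/(2-p)}(\R^4)$, hence $\|v_2\|_{L^{\bar p}(\R^4)} \lesssim \|f\|_{L^p(B_1)}$ since $\bar p < \tfrac{2p}{2-p}$ (and the support localization keeps constants uniform). For $v_1$ one uses that $K \in M^{1,1+\alpha}(B_{1/2}) \cap L^{4/3}(B_{1/2})$ with norms controlled by $M$ (exactly as established in the proof of Proposition~\ref{prop:second order Sobolev regularity}), and then Lemma~\ref{lemma:improved Riesz potential} gives $I_1(K) \in L^{q_0}(\R^4)$ with $q_0 = \tfrac43 \cdot \tfrac{3-\alpha}{2-\alpha} < \tfrac{2p}{2-p}$, and the bootstrapped version gives the full range up to $\tfrac{2p}{2-p}$; in particular $\|v_1\|_{L^{\bar p}(\R^4)} \lesssim M$. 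Since $h = A\Delta u - v_1 - v_2$ on $B_R(x_0)$, we get $\|h\|_{L^{\bar p}(B_{R/2}(x_0))} \le \|A\Delta u\|_{L^{\bar p}(B_1)} + \|v_1\|_{L^{\bar p}(\R^4)} + \|v_2\|_{L^{\bar p}(\R^4)} \lesssim M$, which is \eqref{eq: uniform estimate for harmonic part}. The key point making this work for \emph{all} $x_0, R$ simultaneously is that the right-hand sides $v_1, v_2$ are defined once and for all on $\R^4$ from the global (extended) data, so no constant depends on the particular ball; only the harmonic part changes with the ball, and it is controlled by subtraction.

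The main technical obstacle I expect is verifying that the conservation-law decomposition can be carried out \emph{locally on $B_R(x_0)$} while keeping all constants independent of $x_0$ and $R$. The clean way is not to redo a Hodge/potential decomposition on each small ball, but rather to note that $h$ defined by \eqref{eq: harmonic part} is the \emph{unique} harmonic function agreeing with $A\Delta u$ on $\partial B_R(x_0)$, and that $A\Delta u - v_1 - v_2$ is harmonic in $B_R(x_0)$ with the same boundary trace (since $\Delta(v_1 + v_2) = \Delta(A\Delta u)$ there, using $\Delta v_1 = \divergence K$, $\Delta v_2 = Af$, and \eqref{eq:conservation law of Lamm Riviere}), hence $h = A\Delta u - v_1 - v_2$ on $B_R(x_0)$ by uniqueness — no ball-dependent constants enter. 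One should also check that $A\Delta u \in W^{1,1}$ or at least that its boundary trace on $\partial B_R(x_0)$ makes sense and the Dirichlet problem \eqref{eq: harmonic part} is well posed, which follows from $A\Delta u \in W^{1,4/3,1}_{\loc}$ (already known via $A\Delta u \in W^{1,4/3,1}(B_8)$ from the Lamm–Rivi\`ere theorem, or from the higher regularity $\Delta u \in L^{\bar p}_{\loc}$). Beyond that, the estimates are routine applications of the improved Riesz potential lemma and classical Calderón–Zygmund theory already invoked in Section~\ref{sec:higher order Sobolev regularity}.
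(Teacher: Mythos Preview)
Your proposal has two genuine gaps.

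First, you misread the exponent: in this section $\bar p = \tfrac{2p}{2-p}$ is the \emph{critical} exponent, not a subcritical one. The entire purpose of the lemma is to feed into the proof that $\Delta u\in L^{\bar p}_{\loc}$; so when you write ``using $\|A\Delta u\|_{L^{\bar p}(B_1)}\lesssim \|\Delta u\|_{L^{\bar p}(B_1)}\lesssim M$ from the higher-order regularity already proved,'' you are invoking exactly the conclusion that this lemma is meant to help establish. Proposition~\ref{prop:second order Sobolev regularity} only gives $\Delta u\in L^q_{\loc}$ for $q<\bar p$, with no uniform control as $q\nearrow\bar p$.

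Second, your uniqueness argument is wrong: you claim $h=A\Delta u-v_1-v_2$ on $B_R(x_0)$ because both sides are harmonic and ``have the same boundary trace.'' They do not. The function $A\Delta u-v_1-v_2$ has boundary trace $A\Delta u-v_1-v_2$ on $\partial B_R(x_0)$, not $A\Delta u$; the potentials $v_1,v_2$ are nonzero there. Having $\Delta(v_1+v_2)=\Delta(A\Delta u)$ in the ball says nothing about boundary values. So the subtraction trick does not identify $h$.

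The paper proceeds differently and avoids both issues. It rescales to the unit ball via $h_R(x)=R^2h(x_0+Rx)$, then uses a $W^{1,4/3}$ estimate for the Dirichlet problem (Lemma~\ref{lem: a W1p regularty lemma}) to bound $\|\nabla h_R\|_{L^{4/3}(B_1)}$ by $\|\nabla(A_R\Delta u_R)\|_{L^{4/3}(B_1)}$. The latter is controlled by $\|\nabla\Delta u\|_{L^{4/3}(B_R(x_0))}+\|\Delta u\|_{L^2(B_R(x_0))}$, and the decay estimate of Theorem~\ref{thm:optimal Holder exponent for inho Lamm-Riviere} supplies a factor $R^{4(1-1/p)}$ for these quantities. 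Since $h_R$ is harmonic, one can pass freely from $\|h_R\|_{L^2(B_1)}$ to $\|h_R\|_{L^{\bar p}(B_{1/2})}$; unscaling, the power of $R$ from the decay exactly cancels the scaling deficit $R^{2-4/\bar p}$ (precisely because $\bar p=\tfrac{2p}{2-p}$), yielding a bound independent of $x_0$ and $R$. The key inputs are therefore the decay estimate and harmonicity of $h$, not any a~priori $L^{\bar p}$ control on $\Delta u$.
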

\begin{proof}
Since $u\in W^{3,4/3}(B_{1})$, the existence of $h$ for equation
(\ref{eq: harmonic part}) can be easily deduced from Lemma \ref{lem: a W1p regularty lemma}.
Take a scaling transform $u_{R}(x)=u(x_{0}+Rx)$, $A_{R}=A(x_{0}+Rx)$
and $h_{R}(x)=R^{2}h(x_{0}+Rx)$ for $x\in B_{1}$ such that
\[
\begin{cases}
\De h_{R}=0 & \text{in }B_{1},\\
h_{R}=A_{R}\De u_{R} & \text{on }\pa B_{1}.
\end{cases}
\]
Applying Lemma \ref{lem: a W1p regularty lemma} (with $n=4$, $p=4/3$),
we have $h_{R}\in W^{1,4/3}(B_{1})$ and
\[\begin{aligned}
\|\na h_{R}\|_{L^{4/3}(B_{1})}&\le\|\na(A_{R}\De u_{R})\|_{L^{4/3}(B_{1})}\\
&\le C\left(\|\na\De u_{R}\|_{L^{4/3}(B_{1})}+\|\na A_{R}\|_{L^{4}(B_{1})}\|\De u_{R}\|_{L^{2}(B_{1})}\right)
\end{aligned}
\]
for some constant $C>0$ by \eqref{eq: estimate of gradient}. Since $\na A\in L^{4}(B_{1})$,
$\|\na A_{R}\|_{L^{4}(B_{1})}=\|\na A\|_{L^{4}(B_{R}(x_{0}))}$ is
uniformly bounded with respect to $x_{0}$ and $R$. Hence,
\[
\|\na h_{R}\|_{L^{4/3}(B_{1})}\le C\left(\|\na\De u\|_{L^{4/3}(B_{R}(x_{0}))}+\|\De u_{R}\|_{L^{2}(B_{R}(x_{0}))}\right).
\]
Then, applying Theorem 1.1, we obtain
\[
\|\na h_{R}\|_{L^{4/3}(B_{1})}\le C(p)\left(\|u\|_{W^{2,2}(B_{1})}+\|f\|_{L^{p}(B_{1})}\right)R^{4(1-1/p)}.
\]
Here we used a simple fact that $$\|\na\De u\|_{L^{4/3}(B_{R}(x_{0}))}\le C(p)\left(\|u\|_{W^{2,2}(B_{1})}+\|f\|_{L^{p}(B_{1})}\right)R^{4(1-1/p)}.$$ We leave the  proof  for interested readers.  As a consequence,
\[
\begin{aligned}\|h_{R}\|_{L^{2}(B_{1})} & \le\|h_{R}-A_{R}\De u_{R}\|_{L^{2}(B_{1})}+\|A_{R}\De u_{R}\|_{L^{2}(B_{1})}\\
 & \le C_{p}\left(\|u\|_{W^{2,2}(B_{1})}+\|f\|_{L^{p}(B_{1})}\right)R^{4(1-1/p)}.
\end{aligned}
\]
In particular, this implies that
\[
\|h_{R}\|_{L^{\bar{p}}(B_{1/2})}\le C\|h_{R}\|_{L^{2}(B_{1})}\le C_{p}\left(\|u\|_{W^{2,2}(B_{1})}+\|f\|_{L^{p}(B_{1})}\right)R^{4(1-1/p)},
\]
which is equivalent to \eqref{eq: uniform estimate for harmonic part}.
The proof is complete.
\end{proof}

%We start from the optimal third-order  regularity.

%\begin{proposition}\label{prop:optimal w3,q estimate}
%	%There exist $\ep=\ep(p,m)>0$ and $C=C(p,m)<\infty$ such that if ..., then
%	Under the assumptions of Theorem \ref{thm:optimal global estimate for inho Lamm-Riviere}, we have
%	\[
%	\|u\|_{W^{3,\frac{4p}{4-p}}(B_{\frac{1}{2}})}\le C\left(\|f\|_{L^{p}(B_{1})}+\|u\|_{L^{1}(B_{1})}\right).
%	\]
%\end{proposition}

Now we can prove Theorem \ref{thm:optimal global estimate for inho Lamm-Riviere}.
\begin{proof}[Proof of Theorem \ref{thm:optimal global estimate for inho Lamm-Riviere}]
Set $\bar{q}=\frac{4p}{4-p}$. The idea is to establish a uniform estimate for $\|\nabla^3 u\|_{L^q(B_{{1}/{2}})}$ in terms of $\left(\|f\|_{L^{p}(B_{1})}+\|u\|_{L^{1}(B_{1})}\right)$. The proof consists of two steps. In the first step, we  prove $u\in W^{3,\bar{q}}_{\loc}$. By Remark \ref{rmk: best 3rd order regularity}, it suffices to show $u\in W^{2,\bar{p}}_{\loc}$ for $\bar{p}=\frac{2p}{2-p}$. In the second step we deduce the desired estimate.

We have proved that $u\in W_{\loc}^{2,\ga}(B_{1})$ for any $\ga<{2p}/(2-p)$
whenever $1<p<4/3$.  The idea is to show that $\|\na^{2}u\|_{L^{\gamma}(B_{{1}/{4}})}$ is uniformly bounded from above with respect to $\ga<{2p}/(2-p)$.

In the below, let $$\bar{p}=\frac{2p}{2-p}\quad\text{ and } \gamma\in (\frac{\bar{p}}{2},\bar{p}).$$ By  \eqref{eq: B2}, there exists a constant $C>0$  depending only on $p$,  such that
\begin{equation}\label{eq: L2 theory}
	\|\na^{2}u\|_{L^{\gamma}(B_{{1}/{4}})}\le C\left(\|\De u\|_{L^{\gamma}(B_{{1}/{2}})}+\|u\|_{L^{1}(B_{{1}/{2}})}\right)
\end{equation}
holds for all $\gamma\in (\frac{\bar{p}}{2},\bar{p})$.

 Decompose $A\Delta u=v+h$ in $B_1$ such that $h$ is a  harmonic function in $B_{{1}}$ and  $v=0$ on $\pa B_1$. From \eqref{eq:conservation law of Lamm Riviere} we have
 % the following conservation law \begin{equation*} 	\Delta(A\De u)=\divergence(K)+Af, \end{equation*} where
 $$\Delta v=\divergence(K)+Af$$  in $B_{1}$,  with $K$ being given by $$K=2\nabla A\cdot \De u-\De A\nabla u+Aw\nabla u-\nabla A(V\cdot \nabla u)+A\nabla\big(V\cdot \nabla u \big)+B\cdot \nabla u.$$

We first estimate $\|v\|_{L^{\gamma}(B_{{1}})}$. To this end, notice by duality that
$$\|v\|_{L^\gamma(B_{{1}})}=\sup_{\varphi\in C^{\wq}_0(B_{{1}}),\ \|\varphi\|_{L^{\gamma'}(B_{{1}})}\leq 1}\int_{B_{{1}}}v\varphi dx,$$
where $\gamma'=\frac{\gamma}{\gamma-1}$ is the conjugate exponent of $\gamma$. Let $\psi$ be the solution to the Dirichlet problem $\De \psi=\varphi$ on $B_{{1}}$ with $\psi=0$ on $\partial B_{{1}}$. Since $\bar{p}/2<\ga<\bar{p}$, we have
$$\frac{3}{2}+\frac{4}{3(3p-2)}=\bar{p}'<\gamma'<\big(\frac{\bar{p}}{2}\big)'=\frac{1}{2}+\frac{1}{p-1}.$$
Combining this bound together with the Calder\'on-Zygmund theory (see Section \ref{sec:C-Z theorey revisited}), there exists a constant $C=C(p)>0$ independent of $\ga$ such that
$$\|\psi\|_{W^{2,\gamma'}(B_{{1}})}\leq C \|\varphi \|_{L^{\gamma'}(B_{{1}})}\le C.$$
Let $\gamma^\ast=(\gamma')^\ast$ be defined by
$$\frac{1}{\gamma^\ast}=\frac{1}{\gamma'}-\frac{1}{4}.$$
Then we infer that
\[
\begin{aligned}
	 \|v\|_{L^{\gamma}(B_{{1}})}\le  C_p  \sup_{\psi\in W^{2,\gamma'}\cap W_0^{1,\gamma^\ast}(B_{{1}}),\ \|\psi\|_{W^{2,\gamma'}(B_{{1}})}\leq 1}\int_{B_{{1}}}v\De \psi dx.	
\end{aligned}
\]
Now we estimate the above supremum as follows. Since $v=0, \psi =0 $ on $\pa B_1$, we have
\begin{equation}\label{eq: 6.2}
	\int_{B_{{1}/{2}}}v\De \psi dx=\int_{B_{{1}/{2}}}\De v \psi dx=
	\int_{B_{{1}/{2}}}K\cdot \na \psi+Af\psi dx.
\end{equation}
Note that $\ga'>{\bar{p}'}$ since $\ga<\bar{p}$, where $\bar{p}'$ is the conjugate exponent of $\bar{p}$.  Thus, by the Sobolev embedding $W^{2,\bar{p}'}(B_{{1}})\subset L^{p'}(B_{1})$ and H\"older's inequality, we get
$$\|\psi\|_{L^{p'}(B_{{1}})}\le C_p \|\psi\|_{W^{2,\bar{p}'}(B_{{1}})}\le C_p \|\psi\|_{W^{2,\gamma'}(B_{{1}})}\leq C_p.$$
Hence
\begin{equation}\label{eq: 6.3}
	\int_{B_{{1}}}Af\psi dx\lesssim \|f\|_{L^{p}(B_{{1}})}\|\psi\|_{L^{p'}(B_{{1}}) }\le C_p\|f\|_{L^{p}(B_{{1}})}.
\end{equation}
For the  integral $\int_{B_{{1}}}K\cdot \na \psi dx$, we estimate term by term by  H\"oler's inequality and the smallness assumption. For the first term $\na A\De u$ of $K$, we have
\[\int_{B_{1}}|\na A\De u\na \psi |\le \|\na A\|_{L^4(B_{1})}\|\De u\|_{L^{\ga}(B_{1})} \|\psi\|_{L^{\ga^{\ast}}(B_{1})}\lesssim \ep_m \|\De u\|_{L^{\ga}(B_{1})}.\]
The rest terms can be  estimated similarly. This finally leads us to
\begin{equation}\label{eq: 6.4}
	\int_{B_{{1}}}K\cdot \nabla \psi dx\lesssim\ep_m\left(\|\na^{2}u\|_{L^{\gamma}(B_{{1}})}+\|\na u\|_{L^{\gamma^\ast}(B_{{1}})}\right).
\end{equation}
Therefore, for any $\psi\in W^{2,\gamma'}\cap W_0^{1,\gamma^\ast}(B_{{1}})$ with $\|\psi\|_{W^{2,\gamma'}(B_{{1}})}\leq 1$, \eqref{eq: 6.2}-\eqref{eq: 6.4} implies
\begin{equation}\label{eq: 6.5}
	\int_{B_{{1}}}v\De \psi dx \le C_p \ep_m\left(\|\na^{2}u\|_{L^{\gamma}(B_{{1}})}+\|\na u\|_{L^{\gamma^\ast}(B_{{1}})}\right) + C_p\|f\|_{L^{p}(B_{{1}})}.
\end{equation}

%and
%$$\int_{B_{\frac{1}{2}}}Af\psi dx\lesssim \|f\|_{L^{p}(B_{\frac{1}{2}})} {\color{blue} \text{ (it seems we can even take } p=1) }.$$
%{\color{red} Recomputed in the above.}

It remains to estimate $\|\na u\|_{L^{\gamma^\ast}(B_{{1}})}$. Using the Sobolev embedding theorem, similar to the estimate (30) of \cite{Sharp-Topping-2013-TAMS}, we may find a constant $C>0$, independent of $t$, such that for any $t\in(1,4)$,
\[
\|\na u\|_{L^{\frac{4t}{4-t}}(B_{{1}})}\le\frac{C}{4-t}\|u\|_{W^{2,t}(B_{{1}})}
\le\frac{C}{4-t}\left(\|\na^{2}u\|_{L^{t}(B_{{1}})}
+\|u\|_{L^1(B_{{1}})}\right).
\]
Applying this estimate with $t=\gamma$, and taking supremum  with respect to $\psi$ in  \eqref{eq: 6.5}, we achieve
\begin{equation*}
	\|v\|_{L^\gamma(B_{{1}})} \le C_p \ep_m\left(\|\na^{2}u\|_{L^{\gamma}(B_{{1}})}+\| u\|_{L^1(B_{{1}})}\right) + C_p\|f\|_{L^{p}(B_{{1}})}.
\end{equation*}
Thus
\begin{equation}\label{eq: 6.6}
\begin{aligned}
	\|A\Delta u\|_{L^{\ga}(B_{\frac{1}{2}})}&\leq \|v\|_{L^{\ga}(B_{\frac{1}{2}})}+\|h\|_{L^\ga(B_{\frac{1}{2}})}\\
	&\leq C_p \ep_m\left(\|\na^{2}u\|_{L^{\gamma}(B_{{1}})}+\| u\|_{L^1(B_{{1}})}\right) + C_p\left(\|f\|_{L^{p}(B_{{1}})}+\|h\|_{L^{\bar{p}}(B_{\frac{1}{2}})}\right).
\end{aligned}
\end{equation}
Finally, we infer from  \eqref{eq: L2 theory}  and  \eqref{eq: 6.6} that
\begin{equation}\label{eq:key second order estimate}
	\|\na^{2}u\|_{L^\gamma(B_{\frac{1}{4}})}\le C\ep_m\|\na^{2}u\|_{L^\gamma(B_{{1}})}
	+C(\|f\|_{L^p(B_{{1}})}+\|u\|_{L^1(B_{{1}})}+\|h\|_{L^{\bar{p}}(B_{\frac{1}{2}})})
\end{equation} holds for  some $C=C(p,m)>0$ which is independent of $\ga$.

With \eqref{eq:key second order estimate} at hand, the remaining step is to use a standard scaling technique as that of  \cite[Proof of Lemma 7.2]{Sharp-Topping-2013-TAMS}. Namely, we first use scaling to deduce, for any $B_{R}(z)\subset B_{1}$,
\[
\|\na^{2}u\|_{L^\gamma(B_{\frac{R}{4}}(z))}\le C\ep\|\na^{2}u\|_{L^\ga(B_{R}(z))}+CR^{-6}(\|f\|_{L^p(B_{R}(z))}
+\|u\|_{L^1(B_{R}(z))}+\|h\|_{L^{\bar{p}}(B_{\frac{R}{2}})}),
\]
for $\be=6\bar{p}>0$ (independent of $\ga$). At this moment, \eqref{eq: uniform estimate for harmonic part} implies that we have
\[
\|\na^{2}u\|_{L^\gamma(B_{\frac{R}{4}}(z))}\le C\left(\|u\|_{W^{2,2}(B_{1})}+\|f\|_{L^{p}(B_{1})}\right)
\]
for all $B_{R}(z)\subset B_{1}$. Then, we use an iteration lemma of Simon (see e.g. \cite[Lemma A.7]{Sharp-Topping-2013-TAMS}) to derive the uniform estimate with respect to $\ga$:
\begin{equation}\label{eq: uniform estimate step 1}
\|\na^{2}u\|_{L^\ga(B_{{1}/{4}})}\le C\left(\|u\|_{W^{2,2}(B_{1})}+\|f\|_{L^{p}(B_{1})}\right)
\end{equation}
with a constant $C=C(p,m)$ independent of $\ga$. Letting $\ga\to\bar{p}$
yields $\na^{2}u\in L^{\bar{p}}(B_{{1}/{4}})$. Consequently, $u\in W^{3,\bar{q}}(B_{\frac{1}{4}})$.

In the second step, we want to refine estimate \eqref{eq: uniform estimate step 1} to obtain  the following quantitative estimate:
\begin{equation}\label{eq:3}
\|u\|_{W^{3,\bar{q}}(B_{\frac{1}{2}})}\le C\left(\|f\|_{L^{p}(B_{1})}+\|u\|_{L^{1}(B_{1})}\right).
\end{equation} We use  an interpolation argument.

By the conservation law, we have
$$\divergence\left(\nabla(A\Delta u) \right)=\divergence(K)+Af.$$
Thus  elliptic regularity theory implies that
$$\|\nabla(A\Delta u)\|_{L^{\bar{q}}(B_{\frac{1}{2}})}\lesssim \|K\|_{L^{\bar{q}}(B_1)}+\|f\|_{L^{p}(B_1)}+\|A\Delta u\|_{L^{\bar{p}}(B_1)},$$
from which it follows
\[
	\|\nabla \Delta u\|_{L^{\bar{q}}(B_{\frac{1}{2}})}\lesssim \ep_{m}\left(\|\Delta u\|_{L^{\bar{p}}(B_1)}+\|\nabla u\|_{L^{p_1}(B_1)}\right)+\|\Delta u\|_{L^{\bar{q}}(B_1)}+\|f\|_{L^p(B_1)},
\] where $p_1=4p/(4-3p)$ is the Sobolev exponent of $W^{2,\bar{q}}(\R^4)$ embedding into $L^{p_1}(\R^4)$.
On the other hand, by the Sobolev embedding and interpolation inequality, there holds
$$\|\Delta u\|_{L^{\bar{p}}(B_1)}\lesssim \|\nabla\Delta u\|_{L^{\bar{q}}(B_1)}+\|\Delta u\|_{L^{\bar{p}}(B_1)}\lesssim \|\nabla\Delta u\|_{L^{\bar{q}}(B_1)}+\|u\|_{L^{1}(B_1)}$$
and
$$\|\Delta u\|_{L^{\bar{q}}}\leq \epsilon \|\nabla\Delta u\|_{L^{\bar{q}}(B_1)}+C_{\epsilon}\|u\|_{L^1(B_1)}.$$
Combining all these estimates, we thus conclude that
\begin{equation}\label{eq:4}
\|\nabla\Delta u\|_{L^{\bar{q}}(B_{\frac{1}{2}})}\lesssim \epsilon \|\nabla\Delta u\|_{L^{\bar{q}}(B_1)}+C\left(\|f\|_{L^{p}(B_{1})}+\|u\|_{L^{1}(B_{1})} \right).
\end{equation}
From \eqref{eq:4}, a  scaling argument as that of \eqref{eq: uniform estimate step 1} gives \eqref{eq:3}. The proof is complete.
\end{proof}

Now we can prove Corollary \ref{coro:energy gap}.

\begin{proof}[Proof of Corollary \ref{coro:energy gap}]
	By the proof of previous proposition, we know there exists a constant $C=C(p,m)>0$ such that
	$$\|\nabla^2u\|_{L^{\bar{p}}(B_{1/2})}+\|\nabla u\|_{L^{\frac{4p}{4-3p}}(B_{1/2})}\leq C(p,m)\|u\|_{L^1(B_1)}.$$
	 Using a simple scaling, we then deduce
	\begin{equation}\label{eq:for engergy gap}
		\|\nabla^2 u\|_{L^{\bar{p}}(B_R)}+\|\nabla u\|_{L^{\frac{4p}{4-3p}}(B_R)}\leq C(p,m)R^{-4(1-1/p)}\|u\|_{W^{2,2}(\R^2)}.
	\end{equation}
Sending $R\to \infty$ gives $\nabla u=0$ in $\R^4$,  and so $u$ is a constant. Since $u\in L^2(\R^4)$, $u\equiv 0$ in $\R^4$.
\end{proof}

\subsection{Optimal $W^{4,p}$ estimate in a special case}
In this section, we deduce $W^{4,p}$ estimate under the additional assumption  $V\in W^{2,\frac 43}$ and $w\in W^{1, \frac 43}$. Note that the system of biharmonic mappings is included in this case.

\begin{proposition}\label{prop:optimal w4,p estimate}
	%There exist $\ep=\ep(p,m)>0$ and $C=C(p,m)<\infty$ such that if ..., then
	Under the assumptions of Theorem \ref{thm:optimal global estimate for inho Lamm-Riviere}, if in addition $V\in W^{2,\frac 43}(B_{10})$ and $w\in W^{1, \frac 43}(B_{10})$, then $u\in W^{4,p}_{\loc}(B_1)$ and
\begin{equation}\label{eq: optimal W 4p estimate}
\|u\|_{W^{4,p}(B_{\frac{1}{2}})}\le C\left(\|f\|_{L^{p}(B_{1})}+\|u\|_{L^{1}(B_{1})}\right).
\end{equation}
\end{proposition}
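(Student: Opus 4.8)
The plan is to run one further bootstrap on top of Theorem~\ref{thm:optimal global estimate for inho Lamm-Riviere}, working from the conservation law \eqref{eq:conservation law of Lamm Riviere} rather than the original equation~\eqref{eq:inhomogenous Lamm-Riviere system}; this is essential, because the term $\na\om\cdot\na u$ in \eqref{eq:inhomogenous Lamm-Riviere system} is only a distribution (it can be rewritten $\divergence(\om\na u)-\om\De u$ with $\om\in L^{2}$ only), so it can never contribute an $L^{p}$ piece to $\De^{2}u$, whereas in the conservation law $\om$ has been gauged into $B\in W^{1,4/3}$. By Theorem~\ref{thm:optimal global estimate for inho Lamm-Riviere} we already know $u\in W^{3,\bar{q}}_{\loc}(B_{10})$ with $\bar{q}=\frac{4p}{4-p}$, so the Sobolev embeddings give, locally, $\na^{3}u\in L^{\bar q}$, $\na^{2}u\in L^{\frac{2p}{2-p}}$ and $\na u\in L^{\frac{4p}{4-3p}}$. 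It therefore suffices to prove $\De(A\De u)\in L^{p}_{\loc}$: this yields $A\De u\in W^{2,p}_{\loc}$, and since $A$ is invertible with $A^{-1}\in W^{2,2}\cap L^{\wq}$, writing $\De u=A^{-1}(A\De u)$ and expanding $\na^{2}(A^{-1}(A\De u))$ by the Leibniz rule, every product is in $L^{p}_{\loc}$ (pair $\na^{2}A^{-1}\in L^{2}$ with $A\De u\in L^{\frac{2p}{2-p}}$, $\na A^{-1}\in L^{4}$ with $\na(A\De u)\in L^{\frac{4p}{4-p}}$, $A^{-1}\in L^{\wq}$ with $\na^{2}(A\De u)\in L^{p}$), whence $\De u\in W^{2,p}_{\loc}$ and $u\in W^{4,p}_{\loc}$. (Here smallness may be assumed after the usual localization and rescaling of Section~\ref{sec: preliminaries}, which makes the rescaled coefficients arbitrarily small.)

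Given \eqref{eq:conservation law of Lamm Riviere} and $Af\in L^{p}$, the crux is to show $K\in W^{1,p}_{\loc}$ for $K$ as in \eqref{eq: K}. Differentiating $K$ produces, besides harmless lower-order products, the term $\na\De A\cdot\na u$, so I would first upgrade the gauge to $A\in W^{3,4/3}_{\loc}(B_{8})$. This follows from the defining equation rewritten as $\na\De A={\rm curl}(B)-\De A\,V+\na A\,w-AW$: under the extra hypotheses $V\in W^{2,4/3}\hookrightarrow L^{4}$, $\na V\in W^{1,4/3}\hookrightarrow L^{2}$, $w\in W^{1,4/3}\hookrightarrow L^{2}$, each term lies in $L^{4/3}_{\loc}$, using $B\in W^{1,4/3}$, $\De A\in L^{2}$ against $V\in L^{4}$, $\na A\in L^{4}$ against $w\in L^{2}$, and $A\in L^{\wq}$ against $W\in L^{4/3}_{\loc}$ --- the last because $W$ is a combination of $V,w,\om,F$ and the already-controlled derivatives of $A$ in which no derivative of $\om$ occurs (precisely the feature of the Lamm--Rivi\`ere gauge that makes the argument possible). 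With $\na\De A\in L^{4/3}_{\loc}$ in hand one checks term by term that $\na K\in L^{p}_{\loc}$: every product appearing is handled by H\"older's inequality, and the exponents always add up to $\tfrac1p$ --- e.g.\ $\na\De A\cdot\na u$ with $\tfrac34+\tfrac{4-3p}{4p}=\tfrac1p$, $\na^{2}A\cdot\na^{2}u$ with $\tfrac12+\tfrac{2-p}{2p}=\tfrac1p$, $\na A\cdot\na^{3}u$ with $\tfrac14+\tfrac{4-p}{4p}=\tfrac1p$, the terms $\na^{2}V\cdot\na u$, $\na V\cdot\na^{2}u$, $V\cdot\na^{3}u$ coming from $\na^{2}(V\cdot\na u)$, and $\na w\cdot\na u$, $w\cdot\na^{2}u$, $\na B\cdot\na u$, $B\cdot\na^{2}u$. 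Hence $\divergence K\in L^{p}_{\loc}$, so $\De(A\De u)\in L^{p}_{\loc}$, which proves $u\in W^{4,p}_{\loc}(B_{1})$.

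For the quantitative bound \eqref{eq: optimal W 4p estimate}, set $M=\|f\|_{L^{p}(B_{1})}+\|u\|_{L^{1}(B_{1})}$. The quantitative part of Theorem~\ref{thm:optimal global estimate for inho Lamm-Riviere} gives $\|u\|_{W^{3,\bar q}(B_{1/2})}\le CM$ and hence $CM$-bounds on all the Sobolev norms above; the Lamm--Rivi\`ere estimate \eqref{eq: A-B small} together with the displayed identity for $\na\De A$ bounds $\|A\|_{W^{3,4/3}}$; then $\|\na K\|_{L^{p}(B_{1/2})}\lesssim M$, so interior Calder\'on--Zygmund estimates give $\|A\De u\|_{W^{2,p}(B_{1/4})}\lesssim M$, expanding $\De u=A^{-1}(A\De u)$ as above yields $\|\De u\|_{W^{2,p}(B_{1/4})}\lesssim M$, and elliptic regularity gives $\|u\|_{W^{4,p}(B_{1/8})}\lesssim M$; finally a standard covering and scaling argument as in \cite[Proof of Lemma 7.2]{Sharp-Topping-2013-TAMS}, using the scaling of Section~\ref{sec: preliminaries}, upgrades $B_{1/8}$ to $B_{1/2}$. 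I expect the main obstacle to be precisely the improved gauge regularity $A\in W^{3,4/3}_{\loc}$ --- in particular verifying that the zeroth-order-in-$A$ coefficient $AW$ sits in $L^{4/3}_{\loc}$, which is where the hypotheses on $V$ and $w$ (and the structure of $W$) really enter --- together with the care needed to keep all constants independent of the radius in the final scaling step; everything else is routine bookkeeping of H\"older exponents.
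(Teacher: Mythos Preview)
Your approach and the paper's are essentially the same: both hinge on showing $K\in W^{1,p}_{\loc}$ under the extra hypotheses on $V$ and $w$. The paper then rewrites the conservation law as $\De^{2}u=\divergence(A^{-1}K)+\tilde f$ with $\tilde f\in L^{p}$ and concludes $u\in W^{4,p}_{\loc}$ in a single elliptic step, whereas you go via $A\De u\in W^{2,p}_{\loc}$ and then peel off $A^{-1}$ by Leibniz. The two routes are equivalent, and your H\"older/Leibniz bookkeeping for that last step is correct.

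The genuine gap is in your justification of $A\in W^{3,4/3}_{\loc}$. In the Lamm--Rivi\`ere gauge equation
\[
\na\De A+\De A\,V-\na A\,w+AW={\rm curl}(B),
\]
the coefficient $W$ is precisely the first-order coefficient $\na\om+F$ of the original system --- it is \emph{not} a rearranged combination in which $\na\om$ has disappeared. Hence $AW$ contains $A\na\om$, which is not in $L^{4/3}$ from $\om\in L^{2}$ alone, and your bootstrap for $A$ does not close as written. You are right that controlling $\na\De A$ (forced by the term $-\De A\,\na u$ in $K$) is the crux of the matter; the paper simply asserts $K\in W^{1,p}_{\loc}$ as ``straightforward to verify'' without isolating this point, so in fact you have put your finger on exactly the spot that needs work --- but the specific argument you offer for it is based on an incorrect premise about the structure of $W$.
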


\begin{proof}
By the conservation law, we have
\[
\De^{2}u+2A^{-1}\na A\cdot\na\De u+A^{-1}\De A\De u=A^{-1}{\rm div}K+f.
\]
Equivalently, we have
\begin{equation}\label{eq: equivalent Conservationi law}
	\De^{2}u={\rm div}(A^{-1}K)+\tilde{f},
\end{equation}
where $\tilde{f}=f+\na A^{-1}\cdot K-2A^{-1}\na A\cdot\na\De u-A^{-1}\De A\De u\in L^{p}(B_{2})$
with
\[
\|\tilde{f}\|_{L^{p}(B_{1})}\lesssim\|f\|_{L^{p}(B_{1})}+\ep_{m}\left(\|\na^{3}u\|_{L^{p^{*}}(B_{1})}+\|\na^{2}u\|_{L^{\bar{p}}(B_{1})}+\|\na u\|_{L^{\frac{4p}{4-3p}}(B_{1})}\right).
\]	
	
We use \eqref{eq: equivalent Conservationi law} to prove the desired result.
%Let $\tilde{f}$ be defined as in the proof of Theorem \ref{thm:optimal global estimate for inho Lamm-Riviere}.
Since we have proved  $u\in W^{3,\frac{4p}{4-p}}$, under the additional assumptions $V\in W^{2,\frac 43}(B_{10}), w\in W^{1, \frac 43}(B_{10})$, it is straightforward to verify  that
$
K\in W^{1,p}_{\loc}(B_1),
$
with \[
\|\na K\|_{L^{p}(B_{\frac{2}{3}})}\lesssim
\|u\|_{W^{3,\frac{4p}{4-p}}(B_{\frac{3}{4}})}
\lesssim(\|f\|_{L^{p}(B_{1})}+\|u\|_{L^{1}(B_{1})}).
\]
As a consequence, both $\tilde{f}$ and ${\rm div} A^{-1}K$ belong to $ L^p_{\loc}(B_1)$ with the estimate
\[
\|{\rm div} (A^{-1}K)\|_{L^{p}(B_{\frac{2}{3}})}+\|\tilde{f}\|_{L^{p}(B_{\frac{2}{3}})}\lesssim\ep
\|u\|_{W^{3,\frac{4p}{4-p}}(B_{\frac{3}{4}})}
\lesssim\ep(\|f\|_{L^{p}(B_{1})}+\|u\|_{L^{1}(B_{1})}).
\]
% $v_1=I_{2}({\rm div}K)\in W^{2,p}(B_{\frac{1}{2}})$ with
%\[ \|\na^{2}v_1\|_{L^{p}(B_{\frac{1}{2}})}\lesssim\|{\rm div}K\|_{L^{p}(B_{\frac{2}{3}})}\lesssim\ep(\|f\|_{L^{p}(B_{1})}+\|u\|_{L^{1}(B_{1})}).\]
%Since $v_2=I_{2}(Af)\in W^{2,p}(B_{1})$ as well, we deduce that $A\De u=v_1+v_2+h$ belongs to $W_{\loc}^{2,p}(B_{1/2})$, from which we find that $u\in W_{\loc}^{4,p}(B_{1})$ Here we used the fact that $A^{-1}\in W^{2,2}$.
% Note that
%\[
%\begin{aligned}
%\|\na^{4}u\|_{L^{p}(B_{\frac{1}{4}})}&\lesssim \|\nabla^2 \De u\|_{L^{p}(B_{\frac{1}{2}})}+\|u \|_{L^{1}(B_{\frac{1}{2}})}\\
%&\lesssim \|\nabla^2 (A\De u)\|_{L^{p}(B_{\frac{1}{2}})}+\|u \|_{L^{1}(B_{\frac{1}{2}})}\\
%&\lesssim \|\nabla^2 I_2(\divergence(K))\|_{L^{p}(B_{\frac{1}{2}})}+\|\nabla^2I_2(Af)\|_{L^{p}(B_{\frac{1}{2}})} +\|u \|_{L^{1}(B_{\frac{1}{2}})} \\
%&\lesssim \|\nabla K\|_{L^{p}(B_{\frac{1}{2}})}+\|f\|_{L^{p}(B_{\frac{1}{2}})}+\|u \|_{L^{1}(B_{\frac{1}{2}})} \\
%&\lesssim \|f\|_{L^{p}(B_{1})}+\|u\|_{L^{1}(B_{1})}.
%\end{aligned} \]
Hence, combining \eqref{eq: equivalent Conservationi law} with the standard elliptic regularity theory, we achieve  the desired estimate \eqref{eq: optimal W 4p estimate}. The proof is complete.
\end{proof}

\section{Borderline case and the compactness result}\label{sec:compactness}
In this section we prove Theorem \ref{thm:optimal local estimate LlogL} and Theorem \ref{thm:compactness}.

\begin{proof}[Proof of Theorem \ref{thm:optimal local estimate LlogL}]
The argument is quite similar as that used in Theorem \ref{thm:optimal global estimate for inho Lamm-Riviere}, so we only sketch the proof.

We shall use the conservation law \eqref{eq:conservation law of Lamm Riviere} as there. First extend all relevant functions from $B_1$ to $\R^4$ with controlled norms.
%\begin{equation*}%\label{eq:equivalent system via conservation law} \Delta(A\De u)=\divergence(K)+Af, \end{equation*}
%where
%$$K=2\nabla A\cdot \De u-\De A\nabla u+Aw\nabla u-\nabla A(V\cdot \nabla u)+A\nabla\big(V\cdot \nabla u \big)+B\cdot \nabla u.$$
Then let $v_1=I_2\big(\divergence(K) \big)$, $v_2=I_2(Af)$ and $h:=A\Delta u-v_1-v_2$. As before, $h$ is a harmonic function in $B_1$. The condition $K\in L^{\frac{4}{3},1}$ implies  $v_1\in W^{1,\frac{4}{3},1}(B_{1})$ with the estimate
\[\|\na v_1\|_{L^{4/3,1}(B_1)}\lesssim \|K\|_{L^{4/3,1}(B_1)}\lesssim \ep_m (\|\na^2 u\|_{{L^{2,1}(B_1)}}+\|\na u\|_{L^{4,1}(B_1)}).\] Since $f\in L\log L(B_1)$, we have $Af\in L\log L(B_1)\subset h^1(\R^4)$, where $h^1(\R^4)$ is again the local Hardy space (see \cite[Appendix A.2]{Sharp-Topping-2013-TAMS}). Then the singular integral theory implies that $v_2\in W^{2,1}(B_{1})\subset W^{1,\frac{4}{3},1}(B_1)$ together with the estimate
\[\|\na^2 v_2\|_{L^{1}(B_1)}+\|\na v_2\|_{L^{4/3,1}(B_1)}\lesssim \|f\|_{L\log L(B_1)}.\]
Hence $A\De u=v_1+v_2+h\in W^{1,\frac{4}{3},1}(B_{\frac{7}{8}})$. In particular, this implies that $u\in W^{3,\frac{4}{3},1}(B_{\frac{7}{8}})$.

Next using the same arguments as in the proof of Proposition \ref{prop:optimal w2,q estimate}, we obtain
\[\|\De u\|_{L^{2,1}(B_{7/8})}+\|\na u\|_{L^{4,1}(B_{7/8})}\lesssim  \|f\|_{L\log L(B_1)} + \|u\|_{L^1(B_1)}.\]
Consequently,
\[
\|K\|_{L^{4/3,1}(B_{7/8})}\lesssim\|\na^{2}u\|_{L^{2,1}(B_{7/8})}+\|\na u\|_{L^{4/3,1}(B_{7/8})}\lesssim\|f\|_{L^{4/3,1}(B_{1})}+\|u\|_{L^{1}(B_{1})}.
\]

Returning to system \eqref{eq:conservation law of Lamm Riviere},  the elliptic regularity theory yields
\[
\begin{aligned}\|\na(A\De u)\|_{L^{4/3,1}(B_{3/4})} & \lesssim\|K\|_{L^{4/3,1}(B_{7/8})}+\|Af\|_{L\log L(B_{7/8})}+\|A\De u\|_{L^{2,1}(B_{7/8})}\\
 & \lesssim\|f\|_{L^{4/3,1}(B_{1})}+\|u\|_{L^{1}(B_{1})}.
\end{aligned}
\]
Hence, combining the interior $L^2$-theory and the above estimates, we obtain
\[
\begin{aligned}\|\na^{3}u\|_{L^{4/3,1}(B_{1/2})} & \lesssim\|\De\na u\|_{L^{4/3,1}(B_{3/4})}+\|\na u\|_{L^{4,1}(B_{3/4})}\\
 & \lesssim\|\na(A\De u)\|_{L^{4/3,1}(B_{3/4})}+\|\na u\|_{L^{4,1}(B_{3/4})}\\
 & \lesssim\|f\|_{L^{4/3,1}(B_{1})}+\|u\|_{L^{1}(B_{1})}.
\end{aligned}
\]
 The proof is complete. \end{proof}

% Using the standard interpolation theory (see e.g.~\cite[Chapter 5]{Adams-book}), we may estimate as follows:
%\[ \begin{aligned}
%\|\na^{3}u\|_{L^{\frac{4}{3},1}(B_{\frac{1}{2}})}&\lesssim \|\nabla \De u\|_{L^{\frac{4}{3},1}(B_{\frac{3}{4}})}+\|u \|_{L^{1}(B_{\frac{3}{4}})}\\
%&\lesssim \|\nabla (A\De u)\|_{L^{\frac{4}{3},1}(B_{\frac{3}{4}})}+\|u \|_{L^{1}(B_{\frac{3}{4}})}\\
%&\lesssim \|\nabla I_2(\divergence(K))\|_{L^{\frac{4}{3},1}(B_{\frac{3}{4}})}+\|\nabla I_2(Af)\|_{L^{\frac{4}{3},1}(B_{\frac{3}{4}})}+\|u \|_{L^{1}(B_{\frac{3}{4}})} \\
%&\lesssim \|K\|_{L^{\frac{4}{3}}(B_{\frac{3}{4}})}+\|f\|_{L\log L(B_{\frac{3}{4}})}+\|u \|_{L^{1}(B_{\frac{3}{4}})} \\
%&\lesssim \epsilon_m\big(\|\nabla u\|_{L^{4,2}(B_{\frac{3}{4}})}+\|\nabla^2 u\|_{L^{2}(B_{\frac{3}{4}})}\big)+\|f\|_{L\log L(B_{\frac{3}{4}})}+\|u \|_{L^{1}(B_{\frac{3}{4}})}\\
%&\lesssim \epsilon_m\delta \|\na^{3}u\|_{L^{\frac{4}{3},1}(B_{1})}+(C(\delta)\epsilon_m+1)\|u\|_{L^1(B_1)}
%\|f\|_{L\log L(B_{1})}.\end{aligned}\]
%By a standard scaling, we deduce, for any $B_R(z)\subset B_1$, there holds
%\[\|\nabla^3 u\|_{L^{\frac{4}{3},1}(B_{\frac{R}{2}}(z))}\leq \|\nabla^3 u\|_{L^{\frac{4}{3},1}(B_{{R}}(z))}+R^{-4}\big(\|u\|_{L^1(B_R(z))}+\|f\|_{L\log L(B_R(z))}\big).\]
%Using an iteration lemma of Simon again, we find that
%\[\|\nabla^3 u\|_{L^{\frac{4}{3},1}(B_{\frac{1}{2}})}\leq C\big(\|u\|_{L^1(B_1)}+\|f\|_{L\log L(B_1)}\big).\]

Next we follow the idea of Sharp and Topping \cite{Sharp-Topping-2013-TAMS} to apply Theorem \ref{thm:optimal local estimate LlogL} to prove  Theorem \ref{thm:compactness}.
\begin{proof}
Fix a  ball $B_R(x)\subset\subset B_1$. By Theorem \ref{thm:optimal local estimate LlogL}, we know $\{u_n\}$ is uniformly bounded in $W^{3,\frac{4}{3},1}(B_R)$ and hence also bounded in $W^{2,2,1}(B_R)$. Since $u_n\wto u$ in $W^{2,2}(B_1)$, we only need to show that both $\nabla u_n \to \nabla u$ and $\nabla^2 u_n\to \nabla^2 u$ strongly in $L^2(B_R)$. The first strong convergence is clear and we are left to show the second strong convergence.

Applying \cite[Lemma A.6]{Sharp-Topping-2013-TAMS} with $V_n=\nabla^2 u_n$, it suffices to show that
%\begin{equation}\label{eq:weak compactness nabla u}
%	\lim_{r\to 0}\limsup_{n\to \infty}\|\nabla u_n\|_{L^2(B_r(x))}=0
%\end{equation} 	
%and
\begin{equation}\label{eq:weak compactness Delta u}
		\lim_{r\to 0}\limsup_{n\to \infty}\|\nabla^2 u_n\|_{L^2(B_r(x))}=0.
\end{equation}

Apply (\ref{eq: decay estimate in LlogL}) with $\tau<1$ and scaling
we obtain
\[
\|\De u\|_{L^{2}(B_{\tau r})}+\|\nabla u\|_{L^{4,2}\left(B_{\tau r}\right)}\le\frac{1}{2}\left(\|\De u\|_{L^{2}(B_{r})}+\|\nabla u\|_{L^{4,2}\left(B_{r}\right)}\right)+C\|f\|_{L^{1}(B_{r})}^{1/2}\|f\|_{L\log L(B_{r})}^{1/2}.
\]
Applying Lemma \ref{lemma:ST 2.1} to the last term yields
\[
\|\De u\|_{L^{2}(B_{\tau r})}+\|\nabla u\|_{L^{4,2}\left(B_{\tau r}\right)}\le\frac{1}{2}\left(\|\De u\|_{L^{2}(B_{r})}+\|\nabla u\|_{L^{4,2}\left(B_{r}\right)}\right)+C\left(\log \frac 1r\right)^{-\frac 12}\|f\|_{L\log L(B_{r})}.
\]
Hence,
\[\lim_{r\to 0}\lim_{\tau\to 0}\limsup_{n\to \infty}\|\Delta u_n\|_{L^{2}(B_{\tau r})}^2\leq \frac{1}{2}\lim_{r\to 0}\lim_{\tau\to 0}\limsup_{n\to \infty}\|\Delta u_n\|_{L^{2}(B_{r})}^2,\]
from which we conclude
\[\lim_{r\to 0}\limsup_{n\to \infty}\|\Delta u_n\|_{L^{2}(B_{r})}=0.\]
This together with the standard $L^2$ theory for elliptic equations gives \eqref{eq:weak compactness Delta u}.
\end{proof}

\begin{remark}
In view of Remark \ref{rem: all decay in LlogL}, the above arguments also imply that $u_n\to u$  strongly in $W^{2,2,s}_{\loc}$ for all $1<s\le \wq$. But we can not conclude a strong convergence  in $W^{2,2,1}_{\loc}$. This seems to be a case on the borderline. Indeed, slightly strengthen the assumption by assuming $f\in L\log^pL(B_{10})$ for some $p>1$, then for any $0<r<1$, there holds
\[
\begin{aligned}\int_{B_{r}}|f|\log(2+|f|) & \le\left(\int_{B_{r}}|f|\log^{p}(2+|f|)\right)^{1/p}\left(\int_{B_{r}}|f|\right)^{1-\frac{1}{p}}\\
 & \lesssim\left(\log\frac{1}{r}\right)^{-\frac{p-1}{p}}\int_{B_{r}}|f|\log^{p}(2+|f|).
\end{aligned}
\]
Combining this inequality together with the decay estimate for $s=1$ in  Remark \ref{rem: all decay in LlogL}, the same arguments yield the strong convergence  in $W^{2,2,1}_{\loc}$.
\end{remark}

\appendix

\section{A note on Calder\'on-Zygmund estimate}\label{sec:C-Z theorey revisited}
The aim of this section is to prove that the Calder\'on-Zygmund estimate  is  locally uniform with respect to $p$.
\begin{proposition}\label{prop:local C_Z bound}
For each $\delta\in (0,\frac{1}{2})$, there exists $C=C_{\delta,n}$ such that for any $p\in [1+\delta,\frac{1+\delta}{\delta}]$, the following Calder\'on-Zygmund estimate holds	\begin{equation}\label{eq:local CZ-estimate}
		\|\na^{2}u\|_{p,B_{\frac{1}{2}}}\le C_{\de,n}\left(\|\De u\|_{p,B_{1}}+\|u\|_{p,B_{1}}\right)
\end{equation}
for all $u\in W^{2,p}(B_1)$.
\end{proposition}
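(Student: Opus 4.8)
The plan is to reduce \eqref{eq:local CZ-estimate} to a single bound: that the operator norm of the matrix--valued Calder\'on--Zygmund operator $T:=\na^{2}(-\De)^{-1}$ on $L^{p}(\R^{n})$ stays bounded as $p$ ranges over the compact interval $I_{\de}:=\big[1+\de,\tfrac{1+\de}{\de}\big]$, and then to absorb the lower order terms produced by localization. First I would fix, for every ball $B_{2r}(x_{0})\subset B_{1}$, a cut--off $\eta\in C_{0}^{\wq}(B_{2r}(x_{0}))$ with $\eta\equiv1$ on $B_{r}(x_{0})$ and $|\na^{k}\eta|\lesssim r^{-k}$ for $k=1,2$. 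Given $u\in W^{2,p}(B_{1})$, the function $\eta u$ lies in $W^{2,p}(\R^{n})$ with compact support, hence it is recovered from its Laplacian by convolution with the fundamental solution of $-\De$; differentiating twice gives $\na^{2}(\eta u)=-T\big(\De(\eta u)\big)$ with $\De(\eta u)=\eta\,\De u+2\,\na\eta\cdot\na u+u\,\De\eta$. Taking $L^{p}$ norms, restricting the left side to $B_{r}(x_{0})$ (where $\eta\equiv1$) and estimating the derivatives of $\eta$ on the right, this yields
\[
\|\na^{2}u\|_{L^{p}(B_{r}(x_{0}))}\le\|T\|_{L^{p}\to L^{p}}\Big(\|\De u\|_{L^{p}(B_{2r}(x_{0}))}+\tfrac{C_{n}}{r}\|\na u\|_{L^{p}(B_{2r}(x_{0}))}+\tfrac{C_{n}}{r^{2}}\|u\|_{L^{p}(B_{2r}(x_{0}))}\Big).
\]

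The key step, and the one where the real content of the proposition lies, is to control $A_{p}:=\|T\|_{L^{p}(\R^{n})\to L^{p}(\R^{n})}$ uniformly for $p\in I_{\de}$. For this I would invoke the quantitative Calder\'on--Zygmund theorem: the kernel of $T$ (the matrix of second derivatives of the fundamental solution) is a standard Calder\'on--Zygmund kernel, so $T$ is of weak type $(1,1)$ and bounded on $L^{2}$ with constants depending only on $n$; Marcinkiewicz interpolation then gives $A_{p}\le C_{n}(p-1)^{-1}$ for $1<p\le2$, while duality (the adjoint of $T$ being a singular integral of the same type) gives $A_{p}\le C_{n}\,p$ for $2\le p<\wq$. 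Thus $p\mapsto A_{p}$ is a bounded function on every compact subinterval of $(1,\wq)$, and in particular
\[
\sup_{p\in I_{\de}}A_{p}\ \le\ C_{n}\,\tfrac{1+\de}{\de}\ =:\ C_{\de,n}.
\]
It is worth noting that $I_{\de}$ is invariant under the conjugation $p\mapsto p'=\tfrac{p}{p-1}$ (indeed $(1+\de)'=\tfrac{1+\de}{\de}$), so the bound for the large exponents is in fact forced by the one for the small exponents via duality.

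It then remains to absorb the first order term. For this I would use, on each ball $B_{2r}(x_{0})\subset B_{1}$, the rescaled interpolation inequality
\[
\tfrac{1}{r}\,\|\na u\|_{L^{p}(B_{2r}(x_{0}))}\ \le\ \ep\,\|\na^{2}u\|_{L^{p}(B_{2r}(x_{0}))}+\tfrac{C}{\ep\,r^{2}}\,\|u\|_{L^{p}(B_{2r}(x_{0}))},\qquad \ep\in(0,1],
\]
which follows from the Gagliardo--Nirenberg inequality on the unit ball (whose constant is itself a bounded function of $p$ on compact subintervals of $[1,\wq)$) together with Young's inequality and scaling, so that $C$ here may be taken to depend only on $n$ and $\de$. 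Inserting this into the first display and using $A_{p}\le C_{\de,n}$ gives, for all $B_{2r}(x_{0})\subset B_{1}$,
\[
\|\na^{2}u\|_{L^{p}(B_{r}(x_{0}))}\ \le\ C_{\de,n}\,\ep\,\|\na^{2}u\|_{L^{p}(B_{2r}(x_{0}))}+\frac{C_{\de,n}}{\ep\,r^{2}}\Big(\|\De u\|_{L^{p}(B_{1})}+\|u\|_{L^{p}(B_{1})}\Big).
\]
Choosing $\ep=\ep(\de,n)$ small enough that $C_{\de,n}\,\ep\le\tfrac12$ (the finiteness of $\|\na^{2}u\|_{L^{p}(B_{1})}$ being guaranteed by $u\in W^{2,p}(B_{1})$), the iteration lemma of Simon (\cite[Lemma A.7]{Sharp-Topping-2013-TAMS}, applied exactly as in Section~\ref{sec:optimal global estimates}, here with $\be=2$) removes the $\na^{2}u$ term on the right and yields precisely \eqref{eq:local CZ-estimate} with $C=C_{\de,n}$.

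The only genuinely substantive ingredient is the uniform bound on $A_{p}$; the remainder is bookkeeping, but that bookkeeping is not free: one must check that every auxiliary constant --- the Calder\'on--Zygmund constant, the Gagliardo--Nirenberg constant, and the data fed into Simon's lemma --- is either purely dimensional or else a function of $p$ that remains bounded on the compact interval $I_{\de}$. That verification, rather than any single inequality, is where I expect the main effort to go.
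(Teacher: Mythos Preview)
Your argument is correct and follows essentially the same strategy as the paper: obtain a $p$--uniform bound on the Calder\'on--Zygmund operator $T=\na^{2}(-\De)^{-1}$ on $L^{p}(\R^{n})$ via weak--$(1,1)$, $L^{2}$ boundedness, interpolation and duality; then localize with a cutoff and absorb the first--order term by an interpolation inequality whose constant depends only on $n$. The paper even makes the same remark that $I_{\de}$ is symmetric under $p\mapsto p'$.

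There is one technical simplification worth noting. In your last step you interpolate $\|\na u\|_{p}$ against $\|\na^{2}u\|_{p}$ and $\|u\|_{p}$, which leaves a small multiple of $\|\na^{2}u\|_{L^{p}(B_{2r})}$ on the right and forces you to run Simon's iteration over a family of balls. The paper instead uses a single cutoff from $B_{1/2}$ to $B_{1}$ and invokes the interpolation inequality in the form
\[
\|\na u\|_{p,B_{1}}\le\ep\,\|\De u\|_{p,B_{1}}+\tfrac{C(n)}{\ep}\,\|u\|_{p,B_{1}},
\]
so that (taking $\ep=1$) the right--hand side already involves only $\|\De u\|_{p,B_{1}}$ and $\|u\|_{p,B_{1}}$ and no absorption step is needed. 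Your route is slightly longer but perfectly valid; conversely, the paper's shortcut relies on the $\De u$--version of the interpolation inequality, which is less standard than the $\na^{2}u$--version you cite.
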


%Define
%\[
%C_{p}=\sup_{u\in W^{2,p}(\R^{n}),u\neq0}\frac{\|\na^{2}u\|_{p}}{\|\De u\|_{p}}.
%\]
%We show that $C_{p}$ is locally uniformly bounded in $(1,\wq)$.
\begin{proof}

We first prove a global version. That is, for each $\delta\in (0,\frac{1}{2})$, there exists $C=C_{\delta,n}$ such that for any $p\in (1+\delta,\frac{1+\delta}{\delta})$, 	
\begin{equation}\label{eq:global CZ-estimate}
\|\na^{2}u\|_{L^p(\R^n)}\le C_{\de,n}\left(\|\De u\|_{L^p(\R^n)}+\|u\|_{L^p(\R^n)}\right)
\end{equation}
for all $u\in W^{2,p}(\R^n)$.

By the well-known estimates for Calder\'on-Zygmund operators (see e.g.~\cite{Grafakos-2008-classical}), we have
\[
\|\na^{2}u\|_{L^{1,\wq}(\R^{n})}\le C_{1,n}\|\De u\|_{L^{1}(\R^n)}
\]
and
\[
\|\na^{2}u\|_{L^2(\R^n)}\le C_{2,n}\|\De u\|_{L^2(\R^n)}.
\]
By \cite[Corollary 9.10]{Gilbarg-Trudinger-Book-2001}, we can  take $C_{2,n}=1$. For $1<p<2$ and $u\in W^{2,p}(\R^{n})$, the Marcinkiewicz interpolation theorem (see e.g. \cite[Theorem 9.8]{Gilbarg-Trudinger-Book-2001} or \cite[Theorem 1.3.2]{Grafakos-2008-classical}) implies
\[
\|\na^{2}u\|_{L^p(\R^n)}\le C_{p,n}\|\De u\|_{L^p(\R^n)},
\]
where
\[
C_{p,n}=2\left(\frac{p}{(p-1)(2-p)}\right)^{1/p}\left(C_{1,n}\right)^{\ta}\left(C_{2,n}\right)^{1-\ta}
\]
and  $\theta=\frac{2}{p}-1$. Thus, for any given $\delta \in (0,\frac{1}{2})$ and $1<p\le1+\de$, we have $2-p>1/2$ and $\theta\in (\frac{1}{3},1)$. Consequently, we infer that
\[
C_{p,n}\le\frac{C(n)}{p-1}.
\]

Fix $\de\in(0,1/2)$. We apply the Riesz-Thorin interpolation theorem (see e.g. \cite[Theorem 1.3.4]{Grafakos-2008-classical} with $p_{0}=q_{0}=1+\de$, $p_{1}=q_{1}=2$), to obtain, for any
$1+\de\le p\le2$,
\[
\|\na^{2}u\|_{L^p(\R^n)}\le C_{1+\de,n}^{\ta}C_{2,n}^{1-\ta}\|\De u\|_{L^p(\R^n)}\leq C_{\de,n}\|\De u\|_{L^p(\R^n)},
\]
where in the last inequality we used the fact that $\theta=\frac{2(p-1-\delta)}{p(1-\delta)}\leq \frac{2}{1+\delta}$.

For $2\le p\le\frac{1}{1-\frac{1}{1+\de}}=\frac{1+\de}{\de}$, we  conclude by duality that
\[
\|\na^{2}u\|_{L^p(\R^n)}\le C_{\de,n}\|\De u\|_{L^p(\R^n)}
\]
holds for all $u\in W^{2,p}(\R^{n})$. This proves \eqref{eq:global CZ-estimate}.

Now we can prove the local Calder\'on-Zygmund estimate \eqref{eq:local CZ-estimate}.

For any given $u\in W^{2,p}(B_{1})$, we extend $u$ to $\R^n$ as zero outside $B_1$ and choose $\eta\in C_{0}^{\wq}(B_{1})$ such that $\eta\equiv 1$ on $B_{\frac{1}{2}}$, $0\leq \eta\leq 1$ on $\R^n$ and $\max\{\|\nabla \eta\|_{L^\infty(\R^n)}, \|\De \eta\|_{L^\infty(\R^n)} \}\leq C_n$. Then for any $p\in [1+\de,\frac{1+\de}{\de}]$, we apply the previous global estimate to find a constant $C_{\de,n}>0$ such that
\[
\|\na^{2}(\eta u)\|_{L^p(\R^n)}\le C_{\de,n}\|\De(\eta u)\|_{L^p(\R^n)}.
\]
As a consequence, we have
\begin{equation}\label{eq:local estimate}
\begin{aligned}
\|\na^{2}u\|_{p,B_{1/2}}&\le C_{\de,n}\left(\|\eta \De u\|_{L^p(\R^n)}+2\|\na\eta\cdot\na u\|_{L^p(\R^n)}+\|\De\eta u\|_{L^p(\R^n)}\right)\\
&\le 2C_{\de,n}C_n\left(\|\De u\|_{p,B_{1}}+\|\na u\|_{p,B_{1}}+\|u\|_{p,B_{1}}\right).
\end{aligned}    	
\end{equation}

On the other hand, by the interpolation inequality for Sobolev spaces (see e.g. \cite[Theorem 7.28]{Gilbarg-Trudinger-Book-2001}),
there exists $C=C(n)$ such that for any $\ep>0$,
\[
\|\na u\|_{p,B_{1}}\le\epsilon\|\De u\|_{p,B_{1}}+\frac{C(n)}{\ep}\|u\|_{p,B_{1}}.
\]
Substituting the above inequality with $\ep=1$ into \eqref{eq:local estimate}, we finally obtain
\[
\|\na^{2}u\|_{p,B_{1/2}}\le C_{\de,n}\left(\|\De u\|_{p,B_{1}}+\|u\|_{p,B_{1}}\right).
\]
The proof is complete. \end{proof}

\section{A slightly improved Calder\'on-Zygmund estimate}\label{sec:an interpolation inequality revisited}

In this section, we  prove the following proposition which states a slightly improved Calder\'on-Zygmund estimate. It seems very possible that this proposition was already established in some literature. As we did not find a precise reference at hand, we present a detailed proof here for the reader's convenience.

\begin{proposition}\label{prop:Interpolation inequality}
For each $\delta\in (0,\frac{1}{2})$, there exists $C=C_{\delta,n}$ such that for any $p\in [1+\delta,n-\delta]$, the following Calder\'on-Zygmund estimate holds	
\begin{equation}\label{eq:interpolation inequality}
\|\na^{2}u\|_{p,B_{1/2}}\le C_{\de,n}\left(\|\De u\|_{p,B_{1}}+\|u\|_{1,B_{1}}\right)
\end{equation}
for all $u\in W^{2,p}(B_1)$.
\end{proposition}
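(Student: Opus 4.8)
The plan is to deduce Proposition~\ref{prop:Interpolation inequality} from three ingredients: the uniform (in $p$) Calderón--Zygmund estimate of Proposition~\ref{prop:local C_Z bound}, a quantitative Gagliardo--Nirenberg interpolation inequality, and the hole-filling iteration lemma of Simon. \emph{First I would promote Proposition~\ref{prop:local C_Z bound} to a rescaled form}: there is $C=C_{\de,n}$ such that for every $z\in B_1$, every pair $0<s<r$ with $B_r(z)\subset B_1$, and every $p\in[1+\de,n-\de]$,
\[
\|\na^2 u\|_{L^p(B_s(z))}\le C\Big(\|\De u\|_{L^p(B_r(z))}+(r-s)^{-2}\|u\|_{L^p(B_r(z))}\Big).
\]
This follows from Proposition~\ref{prop:local C_Z bound} by translating, dilating (which turns the pair $(B_{1/2},B_1)$ into a concentric pair of ratio $1/2$), and covering $B_s(z)$ by finitely-overlapping balls of radius $(r-s)/2$ inside $B_r(z)$. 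One point to address here: Proposition~\ref{prop:local C_Z bound} is stated only for $p\in[1+\de,\tfrac{1+\de}{\de}]$, which need not contain $[1+\de,n-\de]$ when $n$ is large; but \eqref{eq:global CZ-estimate}, combined with the self-adjointness of the second-order Riesz transform $R_iR_j$ representing $\na^2\De^{-1}$ (so $\|R_iR_j\|_{L^p\to L^p}=\|R_iR_j\|_{L^{p'}\to L^{p'}}$), shows that the constant may be taken uniform over all $p\in[1+\de,n-\de]$.

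\emph{Next I would record the interpolation inequality.} Since $p\le n-\de<n$, the Gagliardo--Nirenberg inequality on a ball gives, for every $v\in W^{2,p}(B_r(z))$ and every $\ep\in(0,1]$,
\[
\|v\|_{L^p(B_r(z))}\le \ep\, r^2\,\|\na^2 v\|_{L^p(B_r(z))}+C\,\ep^{-\si}\,r^{-n/p'}\,\|v\|_{L^1(B_r(z))},
\]
with $\si=\si(n,\de)>0$ and $C=C(n,\de)$. The constant and the exponent $\si$ are uniform in $p$ precisely because the interpolation exponent $a=\frac{1-1/p}{1-1/p+2/n}$ stays bounded away from $1$ on $[1+\de,n-\de]$.

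\emph{Then I would combine the two.} Feeding the interpolation inequality (with $v=u$) into the rescaled estimate and choosing $\ep=\frac{(r-s)^2}{2Cr^2}\in(0,1]$, the term $C(r-s)^{-2}\ep r^2\|\na^2 u\|_{L^p(B_r(z))}$ becomes $\tfrac12\|\na^2 u\|_{L^p(B_r(z))}$, so (taking $z=0$) for all $\tfrac12\le s<r\le1$,
\[
\|\na^2 u\|_{L^p(B_s)}\le \tfrac12\,\|\na^2 u\|_{L^p(B_r)}+C\,\|\De u\|_{L^p(B_1)}+C\,(r-s)^{-2-2\si}\,\|u\|_{L^1(B_1)}.
\]
Applying Simon's iteration lemma \cite[Lemma~A.7]{Sharp-Topping-2013-TAMS} to $\phi(t):=\|\na^2 u\|_{L^p(B_t)}$ (finite because $u\in W^{2,p}(B_1)$) then yields \eqref{eq:interpolation inequality}.

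I expect the only genuine difficulty to be the uniformity bookkeeping: keeping every constant independent of $p\in[1+\de,n-\de]$ — chiefly the duality extension of Proposition~\ref{prop:local C_Z bound} beyond $\tfrac{1+\de}{\de}$ and the $p$-uniformity of the Gagliardo--Nirenberg constant — together with the last subtlety of choosing $\ep$ as a function of $r-s$ so that Simon's lemma is applied with a genuine constant contraction factor rather than one that degenerates as $r-s\to0$.
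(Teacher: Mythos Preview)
Your proposal is correct and takes a genuinely different route from the paper. The paper proceeds by a one-step absorption: it first proves the fixed-ball inequality
\[
\|u\|_{L^p(B_1)}\le C_{\de,n}\big(\|\De u\|_{L^p(B_1)}+\|u\|_{L^1(B_1)}\big)
\]
via log-convexity of $L^p$ norms ($\|u\|_p\le\|u\|_1^\theta\|u\|_{p^*}^{1-\theta}$), the Sobolev inequality $\|u\|_{p^*}\lesssim\|u\|_p+\|\na u\|_p$ (this is where $p<n$ enters), and the interpolation $\|\na u\|_p\le\ep\|\De u\|_p+C\ep^{-1}\|u\|_p$; the $\|u\|_p$ on the right is then absorbed into the left. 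Plugging this directly into Proposition~\ref{prop:local C_Z bound} finishes the argument in one stroke---no iteration lemma is invoked. Your approach instead couples a rescaled version of Proposition~\ref{prop:local C_Z bound} with Gagliardo--Nirenberg to produce a hole-filling inequality and then appeals to Simon's lemma. Both arguments hinge on the same two pillars (the $p$-uniform local Calder\'on--Zygmund bound and a $p$-uniform interpolation between $L^1$ and $W^{2,p}$), but the paper's absorption avoids the iteration machinery and is slightly shorter; your route is the more ``textbook'' regularity-theory pattern and has the minor advantage that the Gagliardo--Nirenberg step does not actually require $p<n$, so the upper restriction on $p$ could in principle be relaxed. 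Your observation that the stated range of Proposition~\ref{prop:local C_Z bound} may not cover $[1+\de,n-\de]$ for large $n$, and that this is repaired by duality of the Riesz transforms, is a point the paper leaves implicit.
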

In our case, $n=4$ and $1<p<4/3$, so $1<p/(2-p)<\ga<2p/(2-p)<4$.
Thus, there exists a constant $C$ independent of $\ga\in(\frac{p}{2-p},\frac{2p}{2-p})$
such that
\begin{equation}\label{eq: B2}
\|\na^{2}u\|_{\ga,B_{1/2}}\le C\left(\|\De u\|_{\ga,B_{1}}+\|u\|_{1,B_{1}}\right).
\end{equation}

\begin{proof}
We first recall the following results from \cite[Chapter 5]{Adams-book}.
\begin{itemize}

\item \textbf{P1}. There exists $C_{n}>0$ depending only on $n$ such that there
exists an extension operator $E:W^{1,p}(B_{1})\to W_{0}^{1,p}(B_{2})$
for all $1\le p<\wq$ satisfying
\[
\|Eu\|_{W^{1,p}(\R^n)}\le C_{n}\|u\|_{W^{1,p}(B_1)}.
\]

\item \textbf{P2}. Let $1<p<n$ and $u\in W^{1,p}(B_{1})$ for $B_{1}\subset\R^{n}$.
Then
\[
\|u\|_{p}\le\|u\|_{1}^{\ta}\|u\|_{p^{\ast}}^{1-\ta},
\]
where $\frac{1}{p}=1+(1-\theta)\big(\frac{1}{p^\ast}-1\big)$ or equivalently $\theta=\frac{p}{np-n+p}$.
%$\ta(1-\frac{1}{p}+\frac{1}{n})=\frac{1}{n}$.

\end{itemize}
%{\color{blue}Provide references for the above results. Maybe Adams-Fournier.}

By the Sobolev embedding theorem  and property \textbf{P1}, we have
\[
\|u\|_{p^{\ast},B_{1}}\le\|Eu\|_{p^{\ast},B_{2}}\le C_{p}\|\na(Eu)\|_{p,B_{2}}\le C_{p}C_{n}(\|u\|_{p,B_{1}}+\|\na u\|_{p,B_{1}}).
\]
Here $C_{p}$ is the best Sobolev constant satisfying
\[
C_{p}\le\frac{n-1}{\sqrt{n}}\frac{p}{n-p}.
\]
Thus,
\[
\|u\|_{p, B_1}\le\|u\|_{1, B_1}^{\ta}\left(C_{p}C_{n}(\|u\|_{p,B_{1}}+\|\na u\|_{p,B_{1}})\right)^{1-\ta}.
\]
Next we apply  the following interpolation theorem (see e.g.~\cite[Theorem 5.2]{Adams-book}): there exists $C_{n}>0$ such that for
all $1\le p<\wq$ and $\ep>0$,
\[
\|\na u\|_{p,B_{1}}\le\epsilon\|\De u\|_{p,B_{1}}+\frac{C(n)}{\ep}\|u\|_{p,B_{1}},
\]
Taking $\ep=1$, we obtain
\[
\|u\|_{p, B_1}\le\|u\|_{1, B_1}^{\ta}\left(C_{p}C_{n}(\|u\|_{p,B_{1}}+\|\De u\|_{p,B_{1}})\right)^{1-\ta}.
\]
Since $a^{\ta}b^{1-\ta}\le\ep^{-\frac{1-\ta}{\ta}}a+\ep b$, we have
\[
\|u\|_{p, B_1}\le\ep^{-\frac{1-\ta}{\ta}}\|u\|_{1, B_1}+\ep C_{p}C_{n}(\|u\|_{p,B_{1}}+\|\De u\|_{p,B_{1}}).
\]
Take $\ep=1/(2C_{p}C_{n})$ yields
\[
\|u\|_{p}\le(2C_{p}C_{n})^{\frac{1-\ta}{\ta}}\|u\|_{1}+\frac{1}{2}\|\De u\|_{p,B_{1}}.
\]

Note that $1/n\le\ta\le1$ for all $1<p<n$, and $\ta\to1$ as $p\to1$,
$\ta\to1/n$ as $p\to n$. Thus, $0\le1-\ta\le\frac{1-\ta}{\ta}\le n(1-\ta)\le n$
and so
\[
(2C_{p}C_{n})^{\frac{1-\ta}{\ta}}\le2^{n}C_{n}^{n}\left(\frac{p}{n-p}\right)^{\frac{1-\ta}{\ta}}
\]
is locally uniformly bounded for $p\in[1,n)$.

Finally, combining the above estimate with Proposition \ref{prop:local C_Z bound}, we obtain
\[
\|\na^{2}u\|_{p,B_{1/2}}\le C_{\de,n}C_{p}\left(\|\De u\|_{p,B_{1}}+\|u\|_{1,B_{1}}\right)
\]
with a constant $C_{\de,n}C_{p}$ uniformly bounded by $C(\delta,n)$ for all $p\in[1+\de,n-\delta]$.
\end{proof}

\section{A regularity result}
The following lemma is a special case of Theorem 3.31 of \cite{Ambrosio-Book}. We sketch the proof for readers' convenience.
\begin{lemma}\label{lem: a W1p regularty lemma}  Let $\boldsymbol{f}\in L^{p}(B_{1})$ and $v\in W^{1,p}(B_{1})$
for some $1<p<2$. Then there exists a unique $h\in W^{1,p}(B_{1})$
solving equation
\begin{equation}
\begin{cases}
-\De h=\Sd\boldsymbol{f} & \text{in }B_{1},\\
h=v & \text{on }\pa B_{1}.
\end{cases}\label{eq: Dirichlet BVP}
\end{equation}
Moreover, there exists a constant $C=C(n,p)>0$, such that
\[
\|h\|_{W^{1,p}(B_{1})}\le C\left(\|\boldsymbol{f}\|_{L^{p}(B_{1})}+\|v\|_{W^{1,p}(B_{1})}\right).
\]
\end{lemma}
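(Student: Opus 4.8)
The plan is to remove the boundary data by a substitution, solve the resulting divergence-form Dirichlet problem with zero trace by Newtonian potentials together with a harmonic correction, and then obtain uniqueness by testing against solutions of the dual Dirichlet problem.

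\emph{Step 1 (reduction to zero boundary data).} First I would write $h=v+w$, so that \eqref{eq: Dirichlet BVP} becomes the problem of finding $w\in W_0^{1,p}(B_1)$ with $-\De w=\divergence\boldsymbol{g}$ in $B_1$, where $\boldsymbol{g}:=\boldsymbol{f}+\na v\in L^p(B_1)$, together with the estimate $\|w\|_{W^{1,p}(B_1)}\le C\|\boldsymbol{g}\|_{L^p(B_1)}$. Granting this, $h=v+w$ solves \eqref{eq: Dirichlet BVP} and obeys $\|h\|_{W^{1,p}(B_1)}\le\|v\|_{W^{1,p}(B_1)}+\|w\|_{W^{1,p}(B_1)}\le C\big(\|\boldsymbol{f}\|_{L^p(B_1)}+\|v\|_{W^{1,p}(B_1)}\big)$, so it suffices to treat the homogeneous problem.

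\emph{Step 2 (existence).} Extend $\boldsymbol{g}$ by zero to $L^p(\R^n)$ and let $N$ be the fundamental solution of $-\De$ on $\R^n$; set $w_0:=\sum_j\pa_{x_j}(N\ast g_j)$, so that $-\De w_0=\divergence\boldsymbol{g}$ on $\R^n$. The gradient $\na w_0$ is, componentwise, a finite combination of second order Riesz transforms of the $g_j$, so Calder\'on--Zygmund theory gives $\|\na w_0\|_{L^p(\R^n)}\lesssim\|\boldsymbol{g}\|_{L^p(\R^n)}$; since each $\pa_{x_j}(N\ast g_j)$ is, up to a constant, the order-one Riesz potential of $g_j$, the Hardy--Littlewood--Sobolev inequality gives $w_0\in L^{p^\ast}(\R^n)\subset L^p(B_1)$ (here $1/p^\ast=1/p-1/n$, using $1<p<2\le n$) with the same bound, whence $w_0\in W^{1,p}(B_1)$ with norm $\lesssim\|\boldsymbol{g}\|_{L^p(B_1)}$. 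Next I would let $w_1$ be the Poisson (harmonic) extension to $B_1$ of the trace $w_0|_{\pa B_1}$: by the $W^{1,p}$ trace theorem $w_0|_{\pa B_1}\in W^{1-1/p,p}(\pa B_1)$ with norm $\lesssim\|w_0\|_{W^{1,p}(B_1)}$, and the $L^p$-solvability of the Dirichlet problem on the smooth domain $B_1$ yields $w_1\in W^{1,p}(B_1)$ with $\|w_1\|_{W^{1,p}(B_1)}\lesssim\|w_0\|_{W^{1,p}(B_1)}$. Then $w:=w_0-w_1$ has zero trace, hence lies in $W_0^{1,p}(B_1)$, solves $-\De w=\divergence\boldsymbol{g}$ because $w_1$ is harmonic, and satisfies the required estimate; setting $h=v+w$ finishes existence.

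\emph{Step 3 (uniqueness).} If $h\in W^{1,p}(B_1)$ is the difference of two solutions of \eqref{eq: Dirichlet BVP}, then $h\in W_0^{1,p}(B_1)$ and $\De h=0$ in $\mathcal{D}'(B_1)$. For arbitrary $\phi\in C_0^\infty(B_1)$ I would take $\psi\in W^{2,p'}\cap W_0^{1,p'}(B_1)$ with $-\De\psi=\phi$ (existence together with $\|\psi\|_{W^{2,p'}(B_1)}\lesssim\|\phi\|_{L^{p'}(B_1)}$ being the Calder\'on--Zygmund estimate up to the boundary, with $p'=p/(p-1)\in(2,\wq)$), and compute $\int_{B_1}h\phi=\int_{B_1}h(-\De\psi)=\int_{B_1}\na h\cdot\na\psi=0$: the middle equality is integration by parts, legitimate since $h\in W_0^{1,p}$ and $\psi\in W^{2,p'}\cap W_0^{1,p'}$, and the last holds because $\De h=0$ distributionally and $C_0^\infty(B_1)$ is dense in $W_0^{1,p'}(B_1)$. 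Since $\phi$ is arbitrary, $h\equiv 0$; the whole scheme works verbatim for every $1<p<\wq$.

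\emph{Expected main obstacle.} None of the steps requires anything beyond H\"older's inequality and integration by parts once four classical facts about the Laplacian on the smooth ball $B_1$ are available: $L^p$-boundedness of second order Riesz transforms, the $W^{1,p}$ trace theorem, $L^p$-stability of the harmonic extension (equivalently, $L^p$-solvability of the Dirichlet problem with fractional-Sobolev boundary data), and the Calder\'on--Zygmund estimate up to the boundary used in Step 3. These are precisely the ingredients packaged in \cite[Theorem 3.31]{Ambrosio-Book}, so in the actual write-up one may either invoke that theorem directly or, to stay self-contained, flatten the boundary and reflect to reduce the boundary estimates to the corresponding interior ones.
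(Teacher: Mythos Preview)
Your proof is correct but takes a different route from the paper's. You construct the solution explicitly via a Newtonian potential plus a harmonic correction, invoking Calder\'on--Zygmund for the Riesz transforms, the trace theorem, and $L^p$-solvability of the harmonic Dirichlet problem; uniqueness is handled by a clean duality argument. The paper instead first assumes smooth data (so that a classical solution exists), subtracts off $v$ as you do, and then obtains the a priori gradient estimate by a Hodge decomposition trick: writing $F=|\na\bar h|^{p-2}\na\bar h\in L^{p'}$ as $\na\varphi+G$ with $\varphi\in W_0^{1,p'}$ and $\divergence G=0$, one finds $\|\na\bar h\|_{L^p}^p=\int\na\bar h\cdot\na\varphi=-\int(\boldsymbol f+\na v)\cdot\na\varphi$, and the estimate follows from H\"older and the bound $\|\na\varphi\|_{L^{p'}}\lesssim\|F\|_{L^{p'}}=\|\na\bar h\|_{L^p}^{p-1}$; the $L^p$ norm of $h$ then comes from Poincar\'e, and the general case by approximation. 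The paper's approach is more self-contained---it needs only the single Hodge decomposition estimate (essentially $L^{p'}$ boundedness of the Leray projection on $B_1$) rather than your package of trace theorem plus $L^p$ harmonic extension---while your approach is more constructive and yields the solution by an explicit formula, which can be convenient. Both arguments ultimately rest on Calder\'on--Zygmund theory, and as you note both are subsumed by the reference to \cite[Theorem 3.31]{Ambrosio-Book}.
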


\begin{proof} First assume $\boldsymbol{f},v\in C^{2}(\bar{B}_{1})$
such that there exists a solution $h$ of equation (\ref{eq: Dirichlet BVP}).
Let $\bar{h}=h-v\in W_{0}^{1,p}(B_{1})$. Then
\[
-\De\bar{h}=\Sd(\boldsymbol{f}+\na v)
\]
 in $B_{1}$. Put $F=|\na\bar{h}|^{p-2}\na\bar{h}\in L^{p^{\prime}}(B_{1})$. Note $p^{\prime}>2$.
Hodge decomposition gives a function $\var\in W_{0}^{1,p^{\prime}}(B_{1})$
and a function $G\in L^{p^{\prime}}(B_{1})$ satisfying $\Sd G=0$
in $B_{1}$, and
\[
\|\na\var\|_{L^{p^{\prime}}(B_{1})}+\|G\|_{L^{p^{\prime}}(B_{1})}\le C(n,p)\|F\|_{L^{p^{\prime}}(B_{1})}=C(n,p)\|\na\bar{h}\|_{L^{p^{\prime}}(B_{1})}^{p-1}.
\]
Thus, using this Hodge decomposition and H\"older's inequality, we
obtain
\[
\begin{aligned}\|\na\bar{h}\|_{L^{p}(B_{1})}^{p} & =\int_{B_{1}}\na\bar{h}\cdot F\\
 & =\int_{B_{1}}\na\bar{h}\cdot\na\var
 =-\int_{B_{1}}\left(\boldsymbol{f}+\na v\right)\cdot\na\var\\
 & \le\left(\|\boldsymbol{f}\|_{L^{p}(B_{1})}+\|\na v\|_{L^{p}(B_{1})}\right)\|\na\var\|_{L^{p^{\prime}}(B_{1})}\\
 & \le C(n,p)\left(\|\boldsymbol{f}\|_{L^{p}(B_{1})}+\|\na v\|_{L^{p}(B_{1})}\right)\|\na\bar{h}\|_{L^{p^{\prime}}(B_{1})}^{p-1}.
\end{aligned}
\]
This implies $\|\na\bar{h}\|_{L^{p}(B_{1})}\le C(n,p)\left(\|\boldsymbol{f}\|_{L^{p}(B_{1})}+\|\na v\|_{L^{p}(B_{1})}\right)$,
which in turn gives
\begin{equation}
\|\na h\|_{L^{p}(B_{1})}\le C\left(\|\boldsymbol{f}\|_{L^{p}(B_{1})}+\|\na v\|_{L^{p}(B_{1})}\right)\label{eq: estimate of gradient}
\end{equation}
for some $C>0$ depending only on $n$ and $p$.

Then, using Poincar\'e's inequality, we obtain
\[
\|h\|_{L^{p}(B_{1})}\le\|h-v\|_{L^{p}(B_{1})}+\|v\|_{L^{p}(B_{1})}\le C_{n,p}\|\na h-\na v\|_{L^{p}(B_{1})}+\|v\|_{L^{p}(B_{1})}.
\]
Combing the estimate (\ref{eq: estimate of gradient}) yields
\[
\|h\|_{L^{p}(B_{1})}\le C\left(\|\boldsymbol{f}\|_{L^{p}(B_{1})}+\|v\|_{W^{1,p}(B_{1})}\right).
\]
This completes the proof in the case $\boldsymbol{f},v\in C^{2}(\bar{B}_{1})$.

The general case follows from a standard approximation argument. We omit the details. The uniqueness  is also easy. \end{proof}

\textbf{Acknowledgements.} We would like to thank Xiao Zhong for many useful discussions during the preparation of this work. We would also like to thank the anonymous referees for many useful comments and suggestions that greatly improves the work.

\end{document}